\documentclass[11pt]{article}

\usepackage{fullpage}
\usepackage{setspace}
\usepackage{graphicx}
\usepackage{subcaption}
\usepackage{amssymb,amsfonts,amsmath,amsthm,latexsym,epsfig,bm,bbm,mathtools}
\usepackage{xspace,times,color,todonotes}
\usepackage{enumitem}
\usepackage{macros}
\usepackage{thmtools}
\usepackage[dvipsnames]{xcolor}
\usepackage[unicode, colorlinks, linkcolor=Black]{hyperref}

\usepackage{authblk}
\usepackage{soul}

\title{Travel Bans vs. Other Disease Mitigation Measures: A Mathematical Analysis}
\author[1, 3]{Christian Borgs}
\author[2]{Karissa Huang}
\author[1]{Geng Zhao}
\date{}

\affil[1]{Department of Electrical Engineering and Computer Sciences, UC Berkeley}
\affil[2]{Department of Statistics, UC Berkeley}
\affil[3]{Bakar Institute of Digital Materials for the Planet}

\begin{document}

\maketitle

\begin{abstract}
As the world grows increasingly connected, infectious disease transmission and outbreaks have become a pressing global concern for public health officials and policymakers. While policy interventions to contain and prevent the spread of disease have been proposed and implemented, there has been little rigorous quantitative analysis of the effectiveness of such interventions. In this paper, we study the susceptible-infected-recovered (SIR) infection process on a dynamic network model that models two communities with travel between them with the infection starting in one of them. In particular, we consider two Erd\H{o}s--R\'enyi graphs where edges are dynamically changing based on node travel between the graphs. We characterize the time evolution of the outbreaks in both communities and pin down the time for when the infection first reaches the second community. Finally, we analyze two types of interventions--travel bans and intra-community interventions in the second community--and prove that travel bans are not effective, while the second type are effective even without travel bans, provided they sufficiently reduce the effective reproduction number. We complement our analytic results by numerical simulations on large networks with  realistic degree distributions and disease recovery times, showing that these results are robust, and hold for settings that model actual contact networks and disease spread more closely.
\end{abstract}


\maketitle

\section{Introduction}

The rapid spread of infectious diseases has become a global concern, drawing heightened public attention over the last few years in light of the COVID-19 pandemic, and the monkeypox and measles outbreaks. In response to such outbreaks, various interventions, including social distancing and travel bans, have been proposed and implemented to curb transmission. While there have been several empirical studies on the efficacy of social distancing and travel bans in response to various outbreaks, there has been little rigorous quantitative analysis to date. Our paper aims to put understanding of these interventions on a rigorous footing.

Empirically, the efficacy of travel bans as a standalone measure is dubious. Studies have shown that there is little evidence that travel restrictions can contain the spread of influenza, and at best they only delay the onset of the pandemic by a few days or weeks \cite{ferguson2006strategies, mateus2014effectiveness}. In \cite{yu2012transmission} the authors find that border entry screening was unlikely to have delayed the peak of the 2009 H1N1 pandemic in China by more than four days. More recently, in \cite{chinazzi2020effect} the authors show that travel restrictions during COVID-19 only delayed the epidemic (on the order of days) in mainland China, and that travel restrictions to and from mainland China only ``modestly" affected the epidemic trajectory unless combined with another intervention.

In contrast, intra-community interventions, including masks, vaccinations, case isolation, and social distancing, for reducing contact rates and transmission rates have long been used as tools to prevent disease transmission. During the 1918 influenza epidemic, cities that implemented such interventions earlier in the epidemic had lower transmission and mortality rates \cite{bootsma2007effect}. Since then, several studies have found that intra-community interventions are effective at reducing disease transmission and mortality rates, including during the COVID-19 pandemic \cite{ferguson2006strategies, kwon2021association, haug2020ranking}.

In this paper we study a simple model of an SIR epidemic on a dynamic network that models two communities with travel between them. Travel is modeled by two Poisson processes, with the rate of return much higher than the rate of travel, so that at any given time visitors only make up a small proportion in each community.  The epidemic spreads separately in each community (between all individuals currently there, whether they are residents or guests), and only spreads from one community to another when an infected individual travels before they recover (whether the individual is a guest infected while not at home, or an individual infected at home before traveling).  For simplicity, the contact network in each community is modeled by an Erd\H{o}s--R\'enyi random graph, even though our results should hold for other network models for the communities, like stochastic block models or configuration models, at least as long the models are such that $R_0<\infty$.

We start the infection in the first community and are interested in the trajectory of the epidemic in both communities as well as the efficacy of travel bans compared with intra-community interventions in our setting. In the real world, one may imagine that the two communities are separate geographical regions with flights going between them. We study the sparse regime, where the average degree of the vertices is bounded and where the number of travelers is polynomially small in the number of vertices in the graph. 

Our main results are two-fold. First, we characterize the trajectory of the epidemic, pinning down the times at which the infection reaches an $\epsilon$ fraction of vertices in the first and second communities, and the time at which the infection first passes from the first to the second community. In particular, we prove that the epidemic reaches the second community long before reaching an $\epsilon$ fraction in the first. Second, we show that intra-community interventions can reduce the final size of the infection, whereas travel bans have negligible impact on the final size of the infection giving a mathematical understanding of the observational results discussed above.

\paragraph{Related work.}
The SIR model is a canonical model of disease spread in closed, finite populations and was introduced by Kermack and McKendrick in their seminal 1927 paper \cite{kermack1927contribution}. The initial model, and its stochastic counterpart, do not take into account underlying network structure induced by contacts in a population. However, underlying heterogeneities in the network of a population are known to greatly influence the trajectory of an epidemic. As such, there has been a long line of work studying the SIR epidemic on random graphs with bounded average degree, mirroring real-world networks, including the Erd\H{o}s--R\'enyi random graph \cite{neal2003sir}, the configuration model \cite{Bohman2012, barbour2013approximating, Decreusefond2012, janson14:lln-sir}, graphs with local household structure and tunable clustering \cite{Ball2010, house2012modelling, Britton2008}, and the stochastic block model \cite{borgs2024lawlargenumberssir}. 

Another perspective on capturing heterogeneities in networks is the meta-population view; a heterogeneous population is divided into large, homogeneous sub-populations, between which the infection can spread at varying rates. Such models go back to at least 
\cite{hethcote1978immunization}, where the inter-group connections are assumed to be static.  A little later, Rvachev and Longini  \cite{rvachev1985mathematical} proposed a model that explicitly incorporated the effect of air travel and used it to analyze the 1968-69 ``Hong-Kong'' pandemic, using a deterministic, continuous variable model.  This work has been extremely influential, leading to a large body of  work modeling the influence of travel on various specific outbreaks, from influenza \cite{longini88,grais03,grais04} to AIDS \cite{flahault92} and SARS \cite{hufnagel04}.

There are several works studying the model introduced by Rvachev and Longini from a more theoretical point of view, see in particular the series of papers by Colizza et al. \cite{colizza2007modeling,colizza2007invasion,colizza2007reaction,colizza2008epidemic}. More closely related to our work is the paper of Barth\'elemy et al. \cite{barthelemy2010fluctuation}, in which the travel between communities is modeled by a stochastic, integer random variable described by a so-called Cox process. Based on a mean-field approximation for the dynamic within the communities and further approximations to study the Cox process describing travel, 
they determine the asymptotic behavior of the first time an infected individual arrives in a second community. Under the conditions considered in their paper, there is a non-zero probability that the epidemic never spreads to the second community, leading to a percolation type description for the probability of the epidemic jumping into other communities.

More similar to our work in spirit and in its aim for mathematical rigor is the  
the recent work of Ball et al. \cite{ball2024sir} who study the SIR epidemic in populations composed of large, weakly connected subcommunities.
Their setting resembles ours in that the description of the epidemic is fully stochastic, deriving laws of large numbers where needed instead of postulating them. However, it differs in that the mixing is fully homogeneous within each subpopulation and the connections between communities are much sparser, leading again to a percolation type description for the global spread, now rigorously derived from the underlying fully stochastic model. They characterize whether a global outbreak occurs, but do not examine temporal dynamics or control interventions. Furthermore, their model of inter-community connections is static, rather than based on an explicit model of travel.

By contrast, the rigorous recent works
\cite{lashari2018branching, britton2019preventive, durrett2019sir, ball2020epidemics, durrett2022evoSI, cocomello2023sirSEIR, huang2024sir} do consider dynamic networks, with \cite{lashari2018branching} modeling partnership dynamics, \cite{britton2019preventive, durrett2019sir, ball2020epidemics, durrett2022evoSI} studying SI and SIR dynamics on Erd\H{o}s--R\'enyi random graphs and configuration models where individuals drop or rewire connections to infectious neighbors,  \cite{cocomello2023sirSEIR} studying networks where the infection and recovery rates vary dynamically, and \cite{huang2024sir} studying the SIR epidemic on a dynamic inhomogneous Erd\H{o}s--R\'enyi random graph, where edges appear and disappear independently of each other.

From a technical point of view, our work differs from this line of work in that we consider both group structure and 
a dynamic network driven by the movement of individuals, thus making neither the approaches of \cite{britton2019preventive, durrett2019sir, ball2020epidemics, durrett2022evoSI} or \cite{lashari2018branching, cocomello2023sirSEIR, huang2024sir} directly applicable.  Furthermore, we are not only interested in the time evolution of the epidemic, but also the policy implications and efficacy of interventions.  

Our work also differs from previous empirical and theoretical work in that we incorporate ``memory'' into our travel model, allowing the concept of a home community and different rates for travelers leaving their community and for those returning, thus giving a more realistic model for travel.  Note that while we only present proofs for the simple setting of two communities each having an Erd\H{o}s--R\'enyi internal structure, it is clear that the model is well defined for more than two communities, as well as more realistic models for the internal community structure, like stochastic block models, degree corrected stochastic block models, or configuration models. Thus, we believe that in addition to our contribution to a rigorous understanding of the effectiveness of travel restrictions versus social distancing, our model should also advance more realistic modeling for empirical studies of the effect of travel on the spread of an epidemic.

\section{An SIR Epidemic Model on Two Communities with Travel}

Our aim is to understand how travel dynamics impact the  spread of an SIR epidemic between two communities connected through travel. We model these dynamics using a dynamic graph $G(t)$, consisting of two densely connected communities of individuals (nodes in the graph), which are sparsely connected through travel. Edges in the graph appear and disappear as individuals travel between the two communities. Individuals are assigned a label representing their home community in order to track whether they are currently traveling or home. Individuals travel infrequently to the other community, staying there for a duration comparable to the infectious period (e.g. days or weeks). Within each community, infection spreads between its residents and the small fraction of visitors, who may bring the infection back to their home community if they do not recover before they return. Thus, both travel to a guest community and home can facilitate spread between the two communities. For simplicity, we model the contact network within each community by an Erd\H{o}s--R\'enyi random graph although we believe our analyses would extend if other random graph models with more internal structure (e.g. stochastic block model or configuration model) were used within each community.

\paragraph{Contact Network} Let $G$ be a graph on $2n$ vertices with vertices partitioned into two intrinsic \textit{types}, $V_1$ and $V_2$, each containing $n$ vertices. A vertex's type represents its home location and does not change over time. In addition, to model a dynamic population with travel, there is a notion of \emph{community} representing the physical location of vertices. At any given time, each vertex resides in one of two physical communities, $Q_1$ or $Q_2$, and vertices can travel between communities. 
We denote the type of node $v$ by $\mathsf{type}(v)\in\{1,2\}$, and its physical community at time $t$ by $q_v(t)\{1,2\}$.
We say a node $v$ is at home at time $t$ if $\mathsf{type}(v) = q_v(t)$, and otherwise we say it is traveling. 
Let $V_1^\Home$ denote the set of type 1 vertices in community 1 (i.e., those currently at home), and let 
$V_1^\Travel$ denote the set of type 1 vertices in community 2 (i.e., those currently traveling). Similarly, 
define $V_2^\Home$ and $V_2^\Travel$ for type 2 vertices. 

Contacts are represented as edges between individuals. Within each community, edges are formed independently with equal probability $p := c/n$. When any individual travels for the first time, new edges are formed with individuals in the other community with probability $c'/n$ and these edges will remain. For simplicity of notation, we assume that $c = c'$.  Preflipping the random coins governing these edges, we equivalently may connect all vertices with probability $c/n$, whether they have the same label or not, but only activate the edges between vertices of different types when one of them travels.

\paragraph{Travel Dynamics} 
We model travel states $q_v(t)$ as independent continuous-time Markov processes governing movement between communities. 
For type 1 and type 2 nodes, the  transition matrix for travel is given by $\begin{pmatrix}
    -\rho_\Travel & \rho_\Travel \\
    \rho_\Home & -\rho_\Home
\end{pmatrix}$ and  $\begin{pmatrix}
    -\rho_\Home & \rho_\Home \\
    \rho_\Travel & -\rho_\Travel
\end{pmatrix}$, respectively. Standard results for Markov processes implies that in the steady state, an individual is at travel with probability $p_\Travel = \frac{\rho_\Travel}{\rho_\Travel + \rho_\Home}$ and at home with probability $p_\Home = 1 - p_\Travel$. Throughout the paper we will assume that $n^{-1}\rho_\Home \ll \rho_\Travel \ll \rho_\Home$ so that travel is rare, i.e., $p_\Travel \ll 1$, but frequent enough to give a substantial risk of transmission between the two communities, i.e., $n p_\Travel\gg 1$.
An edge between two individuals currently in the same (different) community is said to be active (inactive). We use $G(t)$ to denote the network of \emph{currently active} edges.

\paragraph{SIR Dynamics}
The SIR infection on $G(t)$ is a Markov Chain whose state space is given by the sets of susceptible, infected, and recovered vertices in each community at time $t$. The process starts with a single infected vertex $v_0\in V_1^\Home$ chosen uniformly at random, with all other vertices susceptible. At time $t$, an infected vertex $u$ can infect its neighbor $v$ only if the edge $(u, v)$ is active at time $t$. Infection events happen at rate $\beta$ and infected vertices recover at rate $\gamma$.

Let $S_k(t), I_k(t), R_k(t)$ denote the number of susceptible, infected, and recovered vertices in $V_k$ at time $t \ge 0$ for $k \in \{1, 2\}$. Initially there is one infected vertex, e.g. $I_1(0) = 1$. We assume that the initially infected vertex is not traveling. Since $S_k(t)$ is non-increasing in $t$ and thus must admit a limit, we define $S_k(\infty) = \lim_{t\to\infty} S_k(t)$; $R_k(\infty)$ can be defined similarly.

\paragraph{Key parameters of SIR Dynamics}
The basic reproduction number, denoted as $R_0$, is a fundamental parameter that quantifies the average number of secondary infections produced by a typical infectious individual in a fully susceptible population \cite{diekmann1990definition}.  In our model, $R_0$ is given by
\begin{equation}\label{eqn_R0_def}
    R_0 = \frac{c\beta }{ \beta + \gamma}
\end{equation}
in the limit as the rate of travel tends to zero.
As we will see later, consistent with the literature, the threshold $R_0 > 1$ coincides with the possibility of a major outbreak in the population. Furthermore, the rate of exponential growth (per unit time) of the initial outbreak is closely related to $R_0$ up to a constant factor determining the time scale. In particular, this exponential growth is governed by the rate parameter $\lambda$, defined as 
\begin{equation}\label{eqn_lambda_def}
    \lambda := c\beta - \beta - \gamma = (\beta + \gamma)(R_0 -1).
\end{equation}
Note that $\lambda > 0$ precisely when $R_0 > 1$, in which case the initial infection may grow rapidly and lead to a widespread outbreak; when $R_0 < 1 $, the infection will die out over time without causing a major outbreak.

Two other parameters of relevance are the asymptotic probability
of a large outbreak, $\pi$,  and its relative size, $r_\infty$ 
in the case without travel, i.e., in a single community of $n$ individuals with connection probability $c/n$. They are known to be given as the largest solutions in $[0,1]$ of the implicit equations
 \begin{equation}\label{eqn_theta_def}
        1-\pi = G(\pi) := \gamma \int_0^\infty \exp(-c(1-e^{-\beta \ell})\pi - \gamma \ell) \,d\ell
    \end{equation}
    and
    \begin{equation}\label{eqn_r_infty}
        1 - r_\infty = e^{-R_0 r_\infty},
    \end{equation}
respectively.

\section{Results}
\label{sec:results}

To assess the impact of interventions, we first characterize the spread of an epidemic across two communities in the absence of any control measures (Theorem~\ref{thm_sir_tcm_non_int}). We then use this as a baseline to evaluate how different interventions alter this baseline trajectory, focusing on their impact in reducing the final outbreak size, decreasing the probability of large outbreaks, and delaying the onset of the epidemic. We show that travel bans are largely ineffective in limiting outbreak size, yet intra-community interventions, even when implemented late, can substantially reduce the impact in the second (Theorems~\ref{thm_travel_ban} and \ref{thm_social_distance}).

In the discussion that follows, it is helpful to formalize what we mean by a substantial outbreak. In particular, we use the following definition to distinguish between cases where the infection dies out quickly and those where it spreads to a significant fraction of the population:

\begin{definition}
    An outbreak is called \textbf{noticeable} (in a community or in the entire population) if it eventually infects at least $\epsilon n$ individuals for some small $\epsilon \in (0,r_\infty)$.\footnote{The parameter $\epsilon$ should be thought of as a numerical constant (independent of $n$) that is a small fraction of $r_\infty$, even though mathematically all we need is the strict inequality $\epsilon<r_\infty$.}
\end{definition}

\paragraph{Heuristics.}
Intuitively, one may expect that a travel ban would be effective if implemented when the outbreak first becomes noticeable in community 1. However, we show that by that time, travel has already seeded a moderate number of infections in community 2---more than enough to sustain its own growth, even if all subsequent travel is halted. Further imported infections have only a secondary effect compared to the internal transmissions.

In contrast, intra-community interventions, modeled as reducing the transmission rate or equivalently a reduction in $R_0$, turns out to be an effective intervention. If implemented in community 2 when the outbreak is noticeable in community 1, a noticeable outbreak in community 2 will very likely be prevented. The intuition for this is as follows: at the time when the outbreak is noticeable in community 1, there are only a small number of infections in community 2. Intra-community interventions reduce the rate of transmission among individuals in community 2 such that each newly infected (or imported) case will, on average, infect a bounded number of individuals before they all recover. Meanwhile, the epidemic in community 1 will continue to grow until it reaches herd immunity and later terminates. During this period, the number of imported cases in community 2 is of low order. Thus, the total size of the infection will remain sublinear and the outbreak will never become noticeable in community 2.

\begin{figure}[ht]
    \centering
    \includegraphics[width=0.9\linewidth]{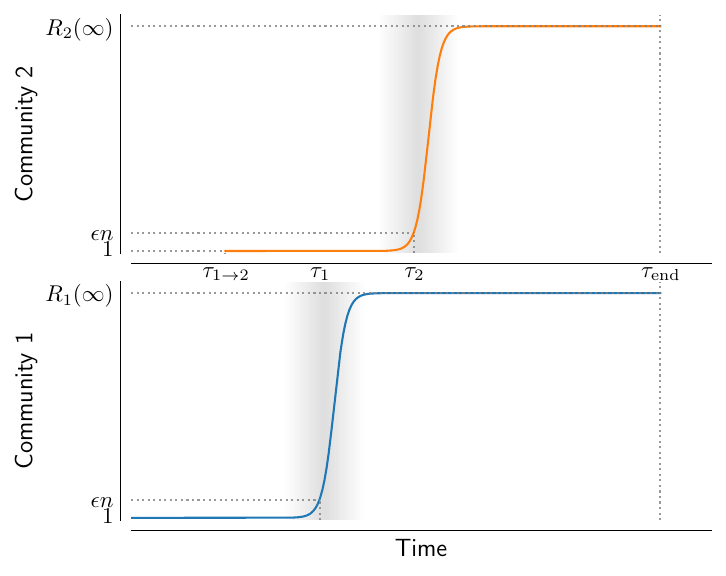}
    \caption{Schematic illustration of epidemic trajectory in the two communities (not to scale). $\tau_1$ and $\tau_2$ mark the major ``waves'' in the two communities: most infections occur within the narrow bands around them. $\tau_{1\to 2}$ marks the initial infection of any type-2 individual. $\tau_{\text{end}}$ indicates the end of the epidemic, i.e., when the number of infectious individual first drops to zero.}
    \label{fig:schematic}
\end{figure}

Figure~\ref{fig:schematic} provides a schematic illustration of the epidemic trajectory in the absence of intervention. The spread across the two communities can be partitioned into four phases:
\begin{itemize}
    \item \textbf{From time $0$ to $\tau_{1 \to 2}$:}  
    The infection starts to spread within community~1. It may die out early, but if it survives, it begins to grow exponentially.

    \item \textbf{From $\tau_{1 \to 2}$ to $\tau_1$:}  
    Infections begin seeding community~2 via travel. Both communities experience exponential growth, though community~2 still has only a small number of infections.

    \item \textbf{From $\tau_1$ to $\tau_2$:}  
    The outbreak in community~1 becomes noticeable and soon passes the herd immunity threshold. Community~2 accumulates a polynomial number of infections. At this stage, a travel ban is no longer meaningful, as enough infections have already been seeded to sustain exponential growth in community~2.

    \item \textbf{From $\tau_2$ to the end:}  
    Community~2 experiences its own noticeable outbreak and eventually reaches herd immunity, regardless of travel policy. However, if social distancing is implemented in community~2 at time $\tau_1$, the outbreak will stay below noticeable size throughout.
\end{itemize}

\paragraph{Formal statements.}
We now formalize our heuristics developed above. To formally state our results, we define the following stopping times for the times at which the epidemic reaches an $\epsilon$-faction of individuals in community 1 and 2:
\begin{align}
    \tau_1(\epsilon) &\defeq \inf \{t>0 : I_1(t) +R_1(t)\geq \epsilon n\}, \label{eqn:tau1def}\\
    \tau_2(\epsilon) &\defeq \inf \{t>0 : I_2(t) +R_2(t) \geq \epsilon n\}.\label{eqn:tau2def}
\end{align}
Also, define the stopping time for the first time that the infection arrives in community 2 by
\begin{equation}\label{eqn:tau1to2def}
    \tau_{1\to 2} \defeq \inf \{t > 0: I_2(t) > 0\}.
\end{equation}

Throughout this paper, we use the standard terminology of an event happening \emph{with high probability}, or w.h.p., to mean that the probability of the event happening tends to $1$ as the population size parameter $n$ goes to infinity. We use $O(\cdot)$ and $\Theta(\cdot)$ to hide constant factors; i.e., we say $f = O(g)$ if $g>0$ and there exists an absolute constant $C<\infty$ such that $\limsup_{n\to\infty} |f|/g < C$, and we say  $f=\Theta(g)$ for $f,g>0$ if both $f=O(g)$ and $g=O(f)$.
We also use the notation $\tilde O(\cdot)$ to suppress logarithmic factors; i.e., $f = \tilde O(g)$ if $f = O(g \ln^a n)$ for some $a\in\N$. We say $f = o(g)$ if $f/g\to 0$ as $n\to\infty$, and use $\pto$ to denote convergence in probability.

The following theorem characterizes the epidemic trajectory---typical timing and size of the outbreaks---in both communities in the absence of interventions\footnote{Note that we didn't define stopping  times in terms of the size of the outbreak in a particular community, but instead considered the size of the outbreak counting people of a given type. Note that asymptotically, this makes no difference, since the epidemic dies out in time $O(\ln n)$, implying that during this time only a tiny fraction $\tilde O(n^{-\alpha})$ of individuals  travel.  In fact, our proofs show that all of our theorems remain valid if one defines the stopping times $\tau_1(\epsilon)$ and $\tau_2(\epsilon)$ in terms of either location or community label.}.
Throughout, we will assume that $0<c<\infty$, $0<\gamma<\infty$, $0<\beta<\infty$ and $0<\rho_H<\infty$ are fixed constants independent of $n$, while we scale $\rho_T$ with $n$.

\begin{theorem}[SIR epidemic with no interventions]\label{thm_sir_tcm_non_int}
    Consider the SIR epidemic spreading on $G(t)$ with rate of travel given by $\rho_\Travel = \Theta(n^{-\alpha})$ for some $\alpha \in (0, 1)$.
        Then the following hold:
    \begin{enumerate}[label=(\alph*)]
        \item \label{thm:main-large-outbreak} 
        The probability of a large  outbreak converges to the largest solution $\pi\in[0,1]$ of the fixed point equation  \eqref{eqn_theta_def}, i.e.,
        \[
            \Pr(R_1(\infty) + R_2(\infty) \geq \ln n) \to \pi \text{ as } n\to\infty.
        \]
           \item \label{thm:main-survival-prob} 
           Conditioned on not having a large outbreak, the duration of the epidemic is bounded in probability, 
           with high probability never reaches community 2, and only ever infects a  number of individuals in community 1 that is bounded in probability.  Finally, with high probability, the stopping times $\tau_1(\epsilon)$ and $\tau_2(\epsilon)$ are infinite.
       \item \label{thm:main-final-size}Conditioned on a large outbreak, 
                  $n^{-1} R_1(\infty)$ and $n^{-1} R_2(\infty)$ converge to the largest solution $r_\infty\in[0,1]$ of the fixed point equation \eqref{eqn_r_infty},
                  \[\frac{\lambda}{\ln n}(\tau_{1\to 2}, \tau_1(\epsilon), \tau_2(\epsilon)) \pto (\alpha, 1, 1+\alpha),\]
   \[
   \frac 1{\ln n}\ln\left(I_2(\tau_1(\epsilon))+R_2(\tau_1(\epsilon))\right)\pto 1-\alpha,\]
   and the epidemic dies out in time $O(\log n)$.
   \end{enumerate}
\end{theorem}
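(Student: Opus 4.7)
The plan is to partition the trajectory into four phases according to the sizes of $I_1(t)$ and $I_2(t)$, and couple each phase to an auxiliary process that is by now essentially standard: a continuous-time Crump--Mode--Jagers (CMJ) branching process while the affected-vertex counts are $o(\sqrt n)$, its Kesten--Stigum exponential growth while the counts are still $o(n)$, and the law-of-large-numbers ODE limit for the ER SIR epidemic once counts reach $\Theta(n)$. The sparse regime $p=c/n$ together with $\rho_\Travel = \Theta(n^{-\alpha})$ makes all three tools applicable with $o(1)$ coupling error, and the two communities can be treated as two such processes linked by a Cox-type process of imports with intensity $\rho_\Travel I_1(t)$.

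In the first phase, the infection in community~1 is coupled to the CMJ process with Poisson$(c)$ offspring and Exp$(\gamma)$ lifetimes whose survival probability is the largest solution $\pi$ of \eqref{eqn_theta_def}; part~(a) follows immediately. On extinction the total progeny and total infectious time are $O_p(1)$, so the probability that any traveler carries the infection across is $O(\rho_\Travel)\to 0$, giving (b). On survival, Kesten--Stigum gives $I_1(t) = W_1 e^{\lambda t}(1+o_p(1))$ for an a.s.\ positive $W_1$, which yields $\tau_1(\epsilon) = \lambda^{-1}\ln n + O_p(1)$.

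In the second phase, conditionally on the trajectory of $I_1$, infected travel events arrive in community~2 as an inhomogeneous Poisson process of rate $\rho_\Travel I_1(t) \asymp \rho_\Travel W_1 e^{\lambda t}$, so equating the cumulative intensity to unity yields $\tau_{1\to 2} = (\alpha/\lambda)\ln n + O_p(1)$. Each import starts an independent copy of the same supercritical CMJ tree in community~2; conditioning on $W_1$ and summing gives
\[
    \mathbb{E}[I_2(t)\mid W_1] \;=\; \rho_\Travel W_1 \int_0^t e^{\lambda s} e^{\lambda (t-s)}\,ds \;=\; \rho_\Travel W_1 \, t \, e^{\lambda t},
\]
which at $t=\tau_1(\epsilon)$ is $n^{1-\alpha+o_p(1)}$. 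A second-moment argument upgrades this to concentration in probability and gives $\ln(I_2+R_2)(\tau_1(\epsilon))/\ln n \pto 1-\alpha$; a further $(\alpha/\lambda)\ln n$ units of exponential growth in community~2 then give $\tau_2(\epsilon) = (1+\alpha)\lambda^{-1}\ln n + o_p(\ln n)$. Once both $I_k+R_k\ge\epsilon n$, the standard LLN for the ER SIR epidemic applies in each community separately; since between-community transmissions contribute only $O_p(n\rho_\Travel\log n)=o(n)$ additional infections, this is a subleading perturbation, and $n^{-1}R_k(\infty)\pto r_\infty$ follows, with the post-herd-immunity ODE tail guaranteeing termination in an additional $O(\log n)$ time.

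The most delicate step is the community-2 analysis in the second phase, where I must track the joint randomness of $W_1$, the seed times, and the fluctuations of the individual seeded subtrees simultaneously. My plan is to condition on $W_1$, represent $I_2(t) = \sum_i \mathbf{1}\{\text{lineage }i\text{ survives}\}\,Z_i\,e^{\lambda(t-s_i)}(1+o_p(1))$ with i.i.d.\ Kesten--Stigum limits $Z_i$ and seed times $s_i$ drawn from the Cox process, and bound the variance of this sum directly. The main technical input is uniformity of the Kesten--Stigum approximation across the $n^{\Theta(1)}$ surviving lineages, for which I would combine the $L^2$ convergence rate of the CMJ martingale with a union bound; a mild additional argument handles atypical $W_1$-events and the small probability that community~1 takes unusually long to reach the polynomial threshold above which Kesten--Stigum applies.
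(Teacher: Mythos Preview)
Your outline follows the heuristics correctly, but the paper's proof takes a different and more robust route that avoids Kesten--Stigum and the random prefactors $W_1,Z_i$ entirely. Since the theorem only asks for $o_p(\ln n)$ precision in the stopping times and $o(1)$ in $\ln(\cdot)/\ln n$, the paper separates upper and lower bounds crudely. For lower bounds on growth it restricts to the set $V_i^{\static}$ of vertices that never travel before $T=\ln^2 n$; these induce a clean $G(n',c'/n')$ with $n'=n-\tilde O(n^{1-\alpha})$, so single-community results apply directly, and seeds into $V_2^{\static}$ are manufactured by an explicit ``dangerous-vertex'' construction rather than a Cox limit. For upper bounds the paper does not couple to a CMJ tree at all: it writes a $16$-dimensional drift inequality for the active-edge counts $X_{ij}^{AB}$ (indexed by type and travel state of both endpoints), bounds the rate matrix as $M_0+W$ with $\|W\|=\tilde O(n^{-\alpha})$, diagonalizes $M_0$ by hand, and concludes via Markov's inequality. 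Your approach would in principle yield the sharper $O_p(1)$ fluctuation of $\tau_1$, at the cost of more delicate second-moment control that the theorem does not require.

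The one place your plan may genuinely break is the representation $I_2(t)=\sum_i \mathbf{1}\{\text{survives}\}\,Z_i e^{\lambda(t-s_i)}$ with i.i.d.\ Kesten--Stigum limits. The seeded subtrees are \emph{not} independent CMJ trees: they explore the same random graph on $V_2$, and once their combined size exceeds $\sqrt n$ (which happens before $\tau_1(\epsilon)$ whenever $\alpha<\tfrac12$) the birthday argument for disjointness fails, so your second-moment computation acquires covariance terms you have not accounted for. Separately, a union bound over $n^{\Theta(1)}$ lineages using only the $L^2$ rate of the CMJ martingale does not produce polynomial tails. Both issues are avoidable at the precision the theorem asks for: a single early surviving seed growing for time $(1-\alpha+o(1))\lambda^{-1}\ln n$ already gives the lower bound on $I_2(\tau_1(\epsilon))$, and a branching upper bound that ignores susceptible depletion gives the matching upper bound---which is exactly what the paper does.
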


\begin{remark}
Theorem~\ref{thm_sir_tcm_non_int} suggests that large outbreaks in the two communities are tightly coupled: a large outbreak in the first community will very likely lead to a large outbreak in the second, only with a delay that is logarithmic in the inverse travel rate. Within each community, once the infection begins to spread, the time it takes to reach macroscopic scale concentrates at $\lambda^{-1} \ln n$, and most infections occur in a narrow window around this time. This behavior closely mirrors what one would expect from SIR dynamics on a single (static) Erd\H{o}s–R\'enyi network. In other words, aside from seeding the outbreak across communities, the spread within each community behaves almost as if the other did not exist.
\end{remark}

We now turn to evaluating the effects of intervention. Specifically, we consider two types of intervention, travel bans and intra-community interventions, applied at the time when community 1 reaches a noticeable outbreak, and analyze their impact on the trajectory in community 2.
In our model of SIR dynamics, these policies are formulated as follows.

\begin{definition}[Travel Ban]
    A travel ban is implemented at time $t_{tb}$ if for all $t\geq t_{tb}$ and all $v \in V_1\cup V_2$, $q_v(t) = q_v(t_{tb})$.
\end{definition}

\begin{definition}[Intra-Community Intervention]
    An intra-community intervention is implemented in a community at time $t_{ic}$ if for all $t \geq t_{ic}$ we assume that in that community $\beta$ and $c$ are replaced by $\beta'$ and $c'$ such that $\lambda' = c'\beta' - \beta' - \gamma < 0$.  
\end{definition}

\begin{theorem}[Effect of Travel Ban]\label{thm_travel_ban}
Consider the setting of Theorem \ref{thm_sir_tcm_non_int} and suppose that at time $\tau_1(\epsilon)$ a travel ban is implemented. Then the same conclusions as in Theorem \ref{thm_sir_tcm_non_int} hold. In particular, the asymptotic probability of a noticeable outbreak remains unchanged and, conditional on such an outbreak, its final size in each community still converges to $r_\infty$ and $\tau_2(\epsilon)/\ln n \overset{p}{\to} 1+\alpha$ for any $\eps < r_\infty$. 
\end{theorem}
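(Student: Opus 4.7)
The plan is to exploit that the travel ban is implemented at a stopping time $\tau_1(\epsilon)$ that is adapted to the pre-ban process. Since the dynamics are identical in distribution to those of Theorem \ref{thm_sir_tcm_non_int} up to time $\tau_1(\epsilon)$, parts (a) and (b) of that theorem carry over immediately: the asymptotic probability of a large outbreak is unchanged, and on the complementary event $\{\tau_1(\epsilon)=\infty\}$ the ban is never triggered, so the conclusions on early extinction are unaffected. It therefore suffices to work on the event of a noticeable outbreak and analyze what happens after the ban is enforced. At that moment, Theorem \ref{thm_sir_tcm_non_int}(c) tells us precisely the state of the system: $I_1+R_1 = \epsilon n$, $I_2+R_2 = n^{1-\alpha + o_p(1)}$, while the number of individuals currently out of position is $O(n p_\Travel) = O(n^{1-\alpha})$ and the number of already-activated cross-community edges is of the same order.

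After time $\tau_1(\epsilon)$, the two communities evolve independently as SIR processes on now-static graphs. Each of these graphs can be realized as an Erd\H os--R\'enyi graph on $n(1+o(1))$ vertices with edge density $c/n$, perturbed by at most $O(n^{1-\alpha})$ extra edges between original residents and frozen-in-place travelers; this perturbation is easily shown to be negligible at the scale of the final-size law. For community 1, we can then invoke the standard SIR-on-ER law of large numbers (e.g.\ Janson--Luczak--Windridge) to conclude $R_1(\infty)/n \pto r_\infty$, since the epidemic has already passed the stochastic barrier of the initial phase. For community 2, we start from $n^{1-\alpha + o_p(1)}$ infectious individuals on a near-ER graph with $R_0 > 1$; this initial count is polynomially large, hence far above the stochastic extinction threshold, so the epidemic grows deterministically at exponential rate $\lambda$ from the start, reaches $\epsilon n$ in additional time $(\alpha/\lambda)\ln n + o_p(\ln n)$, and terminates with $R_2(\infty)/n \pto r_\infty$. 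Combining with $\tau_1(\epsilon)\lambda/\ln n \pto 1$ gives $\tau_2(\epsilon)\lambda/\ln n \pto 1+\alpha$.

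The main obstacle will be to justify that the post-ban SIR in community 2 genuinely behaves like a standard SIR on a clean $\mathrm{ER}(n, c/n)$ started from $n^{1-\alpha + o_p(1)}$ infections. The infected seeds are not uniformly distributed across $V_2$: some are type-2 residents infected at home, some are type-2 travelers who had visited $Q_1$ and returned (so that their cross-community edges had an opportunity to be activated), and some are type-2 vertices frozen in $Q_1$ by the ban. One must check that these correlations and the inhomogeneous initial condition do not spoil the branching/fluid approximation. A clean approach is to couple the post-ban evolution in community 2 with a fresh SIR on $\mathrm{ER}(n, c/n)$ started from the same number of uniformly chosen infected vertices, show that the two processes agree up to an error controlled by the $O(n^{1-\alpha})$ perturbation of edges and seeds, and then feed the coupled clean process through the classical single-community SIR analysis. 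An analogous coupling for community 1 (which is actually easier, since it has already reached macroscopic scale) completes the proof.
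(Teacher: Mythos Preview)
Your high-level strategy is sound and correctly identifies that parts (a) and (b) carry over trivially since the ban is implemented at a stopping time. However, your post-ban analysis takes a genuinely different route from the paper, and the obstacle you flag---that the seeds in community 2 at time $\tau_1(\epsilon)$ are not uniform and are correlated with the already-explored portion of the graph---is real and would require non-trivial work to overcome via the coupling you sketch.

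The paper sidesteps this obstacle entirely. Its lower bounds (Propositions~\ref{prop_lower_bounds_comm1} and~\ref{prop_lower_bounds_comm2}) were never obtained by freezing the state at $\tau_1(\epsilon)$ and running forward. Instead, they restrict the whole process to the non-traveling subpopulations $V_i^\static$ (individuals who never travel during $[0,\ln^2 n]$), on which the travel ban is by definition irrelevant. The key point is that the seeding argument for community~2 already shows that at least $n^{\alpha\delta/16}$ \emph{i.i.d.\ uniform} seeds land in $V_2^\static$ by time $\tau_{1\to 2}^+ + 3 \approx \alpha\lambda^{-1}\ln n$, well before $\tau_1(\epsilon)$. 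Lemma~\ref{lem:singlecomm-multipleseeds} then applies with genuinely uniform seeds and delivers the upper bound on $\tau_2(\epsilon)$ and the lower bound on $R_2(\infty)$ directly---with or without the ban. The upper bounds on $R_i(\infty)$ (Proposition~\ref{prop:Rinfty-upper-bd}) likewise used only that the seed set has size $o(n)$, which the ban can only shrink; and the branching-process bounds for $\tau_2(\epsilon)$ from below and $Y_2(\tau_1(\epsilon))$ from above are handled in Section~\ref{sec;BP-travelban-upper-bd} by an entrywise comparison of rate matrices.

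What the paper's approach buys is that the travel-ban theorem becomes almost free: the existing machinery was, by design, insensitive to whether travel continues after the early seeding window. What your approach would buy, if carried through, is a more modular argument that does not depend on how the no-intervention lower bounds were proved. But to make it rigorous you would need to control the correlation between the $n^{1-\alpha+o_p(1)}$ seeds and the partially-revealed graph, not merely count perturbed edges; that step is where your proposal is currently a sketch rather than a proof.
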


Not only does the theorem state that the probability and size of a large outbreak in the second community is asymptotically the same whether a travel ban is implemented or not, but it also says that a delay of onset (in Community 2) due to travel distancing is of lower order, consistent with empirical observations.

\begin{theorem}[Effect of Intra-Community Intervention]\label{thm_social_distance}
Let $R_0>1$, and suppose that conditioned on $\tau_1(\epsilon)<\infty$ intra-community interventions are implemented  at time $\tau_1(\epsilon)$ . Then all statements from Theorem \ref{thm_sir_tcm_non_int} still hold, 
except for those involving $\tau_2(\epsilon)$ and $R_2(\infty)$.  Instead, we have that conditioned on a large outbreak
 \[\frac 1{\ln n}\ln R_2(\infty)\pto 1-\alpha.\]
\end{theorem}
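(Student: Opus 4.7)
The plan is to split the analysis at the intervention time $\tau_1(\epsilon)$: reuse Theorem~\ref{thm_sir_tcm_non_int} up to that time, and then control the post-intervention dynamics in community~2 via a sub-critical branching process dominating argument. Since social distancing is applied only in community~2, the dynamics in community~1 are essentially unaffected: the only feedback from community~2 back into community~1 goes through infected type-2 visitors currently in community~1, whose total count is at most $O(p_{\Travel}\cdot n)=O(n^{1-\alpha})$ whether or not social distancing is applied. A standard coupling with the no-intervention process then transfers all of Theorem~\ref{thm_sir_tcm_non_int}'s conclusions about community~1 and about $\tau_{1\to 2}$, $\tau_1(\epsilon)$, and $(I_2+R_2)(\tau_1(\epsilon))$ verbatim to the intervened setting.

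The lower bound is immediate from this inheritance: conditional on a large outbreak, part~\ref{thm:main-final-size} of Theorem~\ref{thm_sir_tcm_non_int} gives $\ln(I_2(\tau_1(\epsilon))+R_2(\tau_1(\epsilon)))/\ln n\pto 1-\alpha$, and since all these individuals eventually recover we have $R_2(\infty)\geq I_2(\tau_1(\epsilon))+R_2(\tau_1(\epsilon))$, so $\ln R_2(\infty)/\ln n\geq 1-\alpha-o_p(1)$.

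For the upper bound I would dominate the cumulative infection count in community~2 after time $\tau_1(\epsilon)$ by the total progeny of a superposition of independent sub-critical Galton--Watson trees, one rooted at each ``seed''. A seed is either (i) an individual already infected in community~2 at time $\tau_1(\epsilon)$, of which there are $n^{1-\alpha+o_p(1)}$ by the inheritance step, or (ii) a new cross-community infection event after $\tau_1(\epsilon)$. Under social distancing the per-seed offspring distribution has mean $R_0'=c\beta'/(\beta'+\gamma)<1$ (with an $o(1)$ correction from susceptible depletion, which is harmless so long as the total infection count in community~2 stays at scale $n^{1-\alpha+o(1)}$), so each tree has expected total size $1/(1-R_0')$ and light (sub-exponential) tails. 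To count type-(ii) seeds I would use that cross-community infection events occur at rate $\Theta(\rho_{\Travel}\cdot I_1(t))$, and that $\int_0^\infty I_1(t)\,dt=\Theta(n)$ by the law of large numbers for SIR in community~1 (cf.\ \cite{janson14:lln-sir}), yielding $\Theta(n^{1-\alpha})$ expected imports. A Bernstein/Bennett-type concentration bound applied to the sum of $\tilde O(n^{1-\alpha})$ sub-exponential progeny variables then gives $R_2(\infty)=\tilde O(n^{1-\alpha})$ w.h.p., which matches the lower bound at the $\ln/\ln n$ scale.

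The hard part will be making the branching-process domination rigorous in the presence of $\Theta(n^{1-\alpha})$ concurrent seeds in community~2 at time $\tau_1(\epsilon)$: the individual subtrees are not strictly independent, since they share susceptibles and can collide on neighborhoods of size $\sim c$. I would handle this by ordering the seeds in time and running an inductive stopping-time argument: as long as the cumulative count in community~2 stays below $n^{1-\alpha}\ln^2 n$, the susceptible depletion correction to $R_0'$ is $O(n^{-\alpha}\ln^2 n)=o(1)$ and the subtrees are stochastically dominated by i.i.d.\ sub-critical Galton--Watson trees, at which point the concentration bound closes the loop before the stopping time triggers. A secondary bookkeeping point is that type-1 travelers infected in community~2 may return home and cause further late seeds back into community~2 via future travel, but their total number is $O(n^{1-\alpha})$ and each contributes $O(1)$ additional seeds in expectation, so they are absorbed into the same estimate.
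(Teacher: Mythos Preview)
Your proposal is correct and lands on the same high-level structure as the paper---inherit everything up to $\tau_1(\epsilon)$, take the lower bound on $R_2(\infty)$ from $Y_2(\tau_1(\epsilon))$, and show the post-intervention growth in community~2 is subcritical so only a factor $n^{o(1)}$ is added---but the upper-bound machinery is genuinely different.

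The paper does \emph{not} decompose into per-seed Galton--Watson trees. Instead it reuses its 16-dimensional drift/matrix framework: it waits until a deterministic time $t_0=(1+\delta)\lambda^{-1}\ln n$ by which, on a high-probability event $\tilde\calE_0$, community~1 has passed its herd-immunity threshold; it then writes down a modified rate matrix $\tilde M$ encoding both the reduced transmission $\beta'$ in community~2 and the depleted susceptible pool in community~1, shows its unperturbed part $\tilde M_0$ has strictly negative leading eigenvalue $\nu=\max\{-c\delta'\beta,\,(c-1)\beta'-\gamma\}$, and concludes via perturbation theory and Markov that $\E[Y_2(t)\mid\tilde\calE_0]=\tilde O(n^{-\alpha}e^{\lambda t_0})$ for all $t\in[t_0,\ln^2 n]$. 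The same negative-eigenvalue argument gives the $O(\ln n)$ extinction time.

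Your route is more elementary and probabilistic, and avoids the matrix perturbation theory entirely; the paper's route is more systematic and recycles machinery already built for Theorem~\ref{thm_sir_tcm_non_int}. One point where the paper's choice of $t_0$ past herd immunity simplifies matters: by starting the subcritical bound only after community~1 is itself subcritical, both communities decay simultaneously and a single matrix handles everything. In your approach the window $[\tau_1(\epsilon),\tau_1(\epsilon')]$ (with $\epsilon'$ past herd immunity) still has community~1 supercritical and feeding seeds; you cover this implicitly via $\int I_1\,dt=\Theta(n)$, but you should note that this window is $O(1)$ in length (by the law-of-large-numbers phase of Theorem~\ref{thm:single-comm}\ref{bullet:single_comm_tau_concentration}) so the import count in that window is $O(n\cdot\rho_\Travel)=O(n^{1-\alpha})$, consistent with the rest of your estimate. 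Your stopping-time bootstrap for handling the non-independence of the subtrees is the right idea and is what the paper's expectation-based bound sidesteps.
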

Note that the theorem implies that an intra-community intervention in community 2 does not change the asymptotics of the final size in community 1, while it does change the final size in community 2, which now is sublinear in $n$.  In fact, comparing the final size in community 2 to the size at time $\tau_1(\epsilon)$ from Theorem~\ref{thm_sir_tcm_non_int}, one sees that if an intra-community intervention is implemented at time $\tau_1(\epsilon)$,
then the final size of the epidemic in community 2 does not grow much afterwards -- it can grow at most by a factor $n^{o(1)}$.
\section{Discussion}

Our results demonstrate that common intervention policies have varied levels of efficacy in limiting epidemic impact across populations with weak ties. While travel bans are often perceived as a natural first response to emerging outbreaks, our analysis shows that they have minimal effect on both the final size of the outbreak and its timing. Even when implemented early, travel restrictions do not substantially delay the onset of infection in the second community: the excess delay is of lower order compared to the natural delay (without interventions). This suggests that travel bans offer limited utility as a standalone measure.

In contrast, intra-community interventions that reduce the transmission rate or contact rate prove highly effective. When enacted in the second community at the moment the outbreak has reached moderate levels in the first, such interventions dramatically reduce the final number of infections. Notably, the reduction is not just in constant factors: the number of infections becomes sublinear in the population size, demonstrating a strong containment effect even under persistent travel.

Although our model is based on a simple Erd\H{o}s--R\'enyi network, the insights likely extend to more general and realistic contact structures, such as configuration models, clustered networks, or household models, provided travel remains rare and mixing within communities is sufficient. With more than two communities, we expect qualitatively similar dynamics, with delays in outbreak timing depending on the inter-community travel densities, and infectious spread following the ``most densely connected'' paths. 

The intuition behind our proofs points to possible generalizations along the lines outlined in the previous paragraph.  We expect that the mechanism identified in our paper, the interaction of early exponential growth in the first community (before detection), hidden seeds in the second community, and further exponential growth in the second community, do hold much more generally. While there are many details that would need to be worked out, much of the technology needed to generalize our proof to the setting mentioned in the previous paragraph (and beyond) already exists; the powerful tools of multi-type, continuous time branching processes allow for the identification of the initial rate of exponential growth, even for more complicated network or disease models.

However, in the current context, such  generalizations would not add further insight; rather, they would hide the inherent simplicity of the underlying mechanism in more elaborate proofs. Instead, we run simulations to demonstrate that our results  hold on networks with more realistic contact structure and heterogeneity in individual infectiousness.

\begin{figure}[tbhp]
    \centering
    \begin{subfigure}[b]{0.49\textwidth}
        \centering
        \includegraphics[width=.9\linewidth]{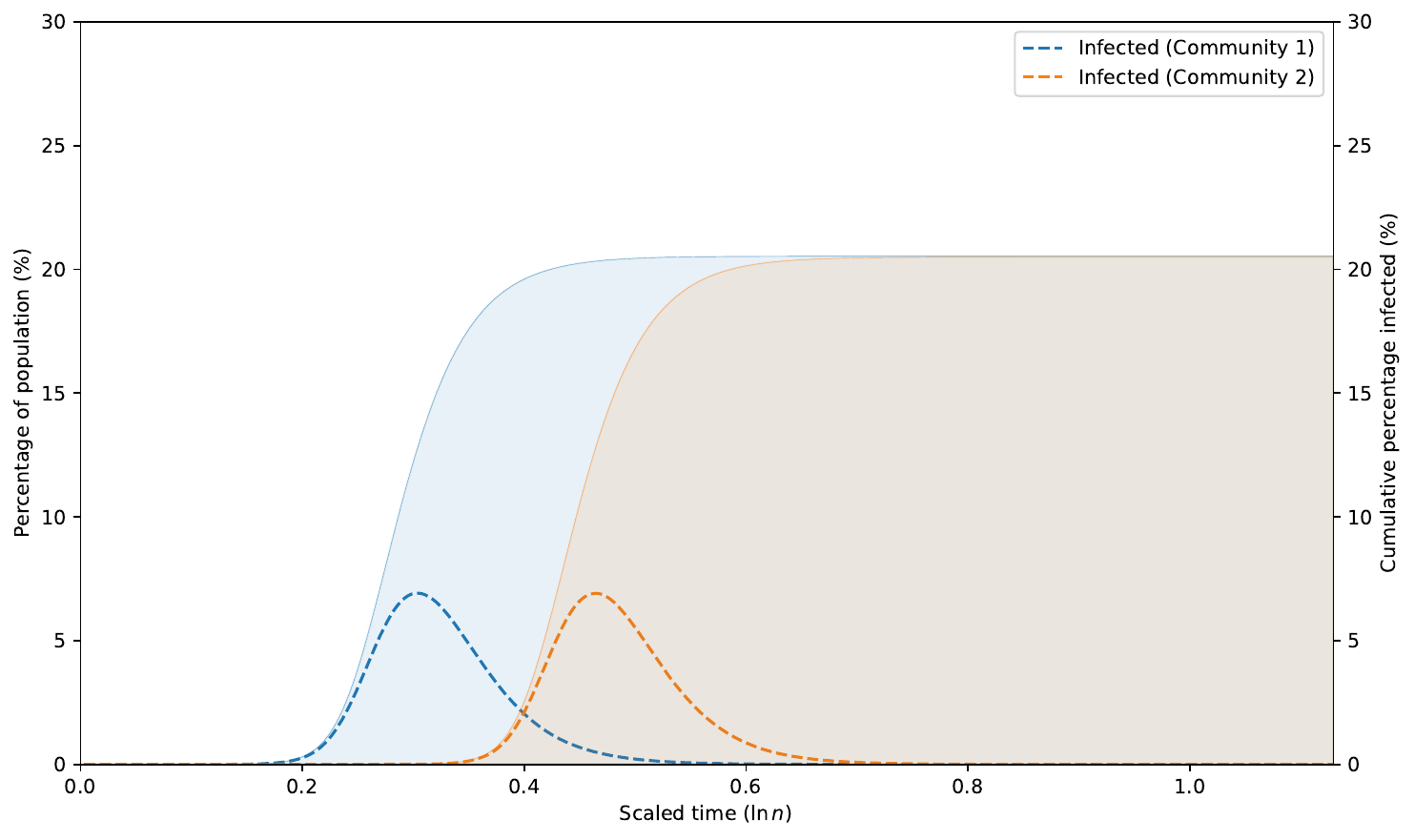}
        \label{fig:sim-no-intervene}
        \caption{No intervention.}
    \end{subfigure}
    \begin{subfigure}[b]{0.49\textwidth}
        \centering
        \includegraphics[width=.9\linewidth]{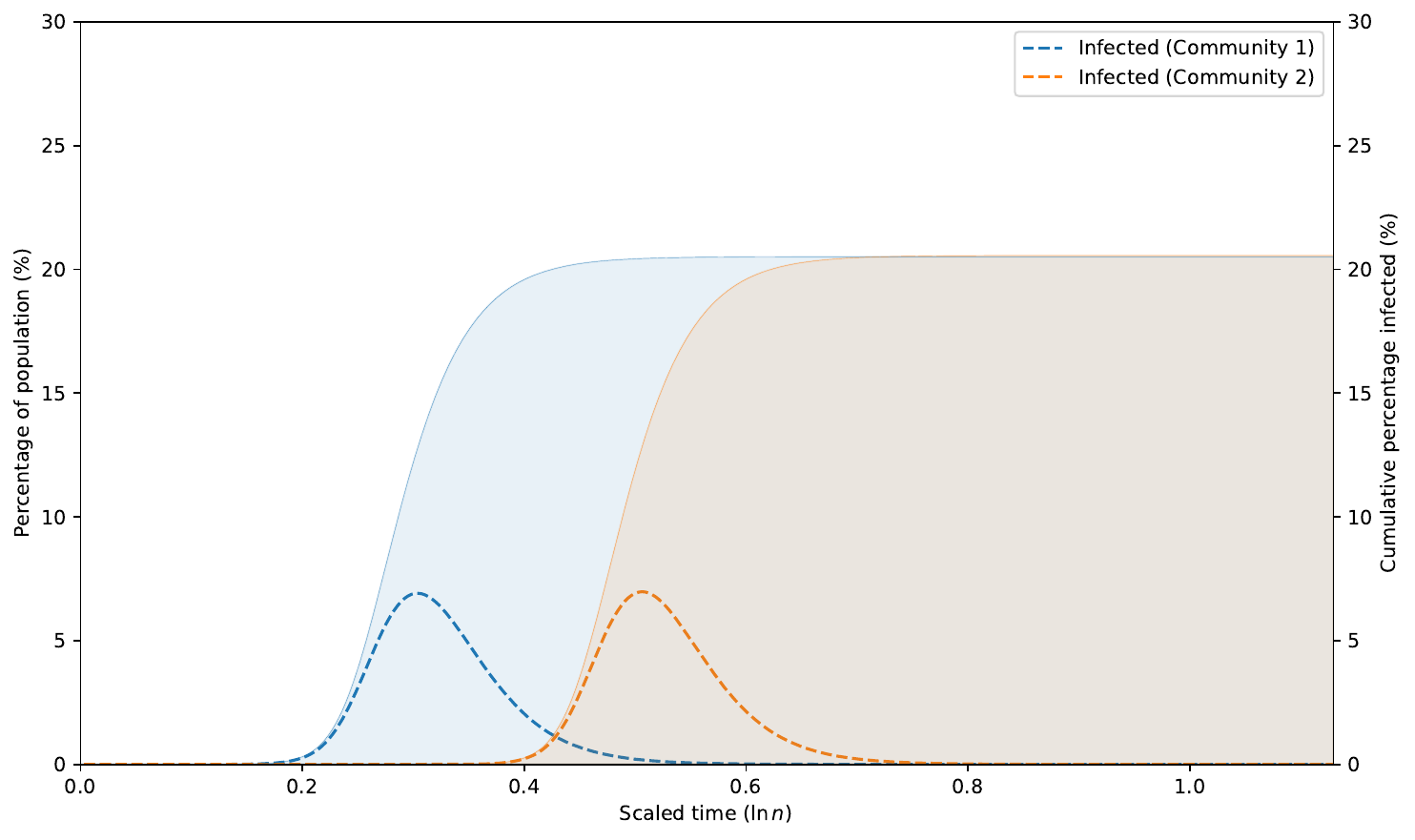}
        \label{fig:sim-tb}
        \caption{Travel ban implemented at $\tau_1(1\%).$}
    \end{subfigure}
    \begin{subfigure}[b]{0.49\textwidth}
        \centering
        \includegraphics[width=.9\linewidth]{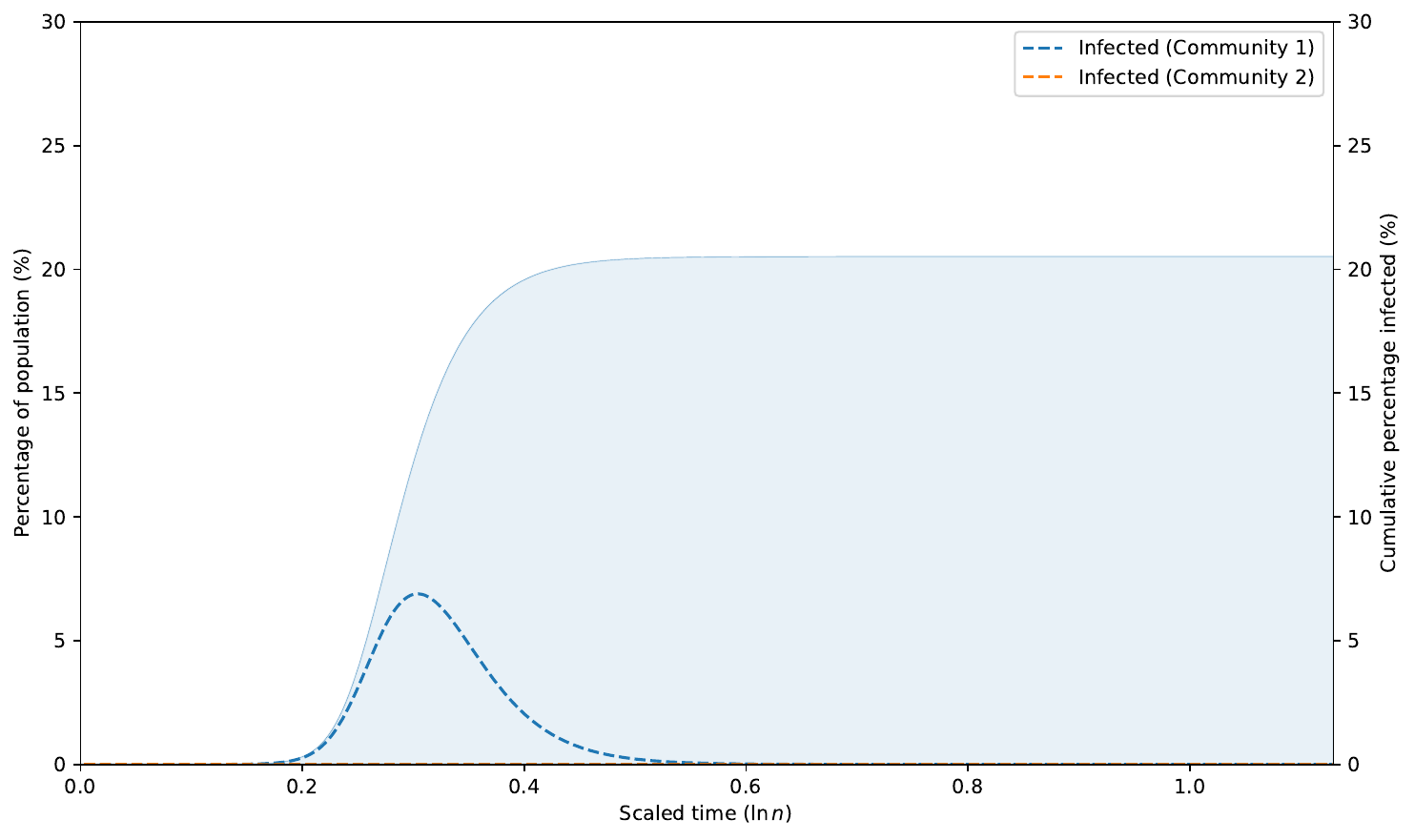}
        \label{fig:sim-sd}
        \caption{Intra-community intervention in community 2 implemented at $\tau_1(1\%).$}
    \end{subfigure}
    \caption{Simulation on two communities of $n=10^7$ individuals each, with an underlying network generated from a configuration model with lognormal distributed degrees ($\mu = 1.40, \sigma = 1.27$); parameters taken from \cite{hill2021network}. The dashed infection curves represent the proportion of infected individuals at any point in time (left axis), and the solid curves represent the cumulative percentage of infections (right axis). The infection rates are exponentially distributed with parameter $\beta = 0.1$. The recovery rates are gamma distributed with shape parameter $3.5$ and scale parameter $0.2041$. These parameters yield $R_0 \approx 3$. Other parameters are: $\rho_\Travel = n^{-1/2}$, and $\rho_\Home = 1$.}
    \label{fig:sim}
\end{figure}

\newpage

\newpage

Figure~\ref{fig:sim} shows a simulation on a contact network with two communities generated using configuration models with $n = 10^7$ individuals in each community. Individuals are randomly assigned into two types and an underlying network with degrees drawn from the lognormal distribution is sampled. The parameters of the lognormal distribution are taken from existing literature that constructs an individual-based network for studying COVID transmission dynamics \cite{hill2021network}.

We normalize $\rho_\Home = 1$ and set $\rho_\Travel = n^{-1/2}$. For more realistic infectiousness periods of individuals, the recovery rates are modeled using a gamma distribution rather than an exponential distribution \cite{lloyd2001realistic, wearing2005appropriate}, with $\beta$ and the parameters of the gamma distribution chosen to roughly match $R_0$ and the dispersion parameter of Omicron \cite{kremer2022serial, sender2022unmitigated}. We assume that any interventions are implemented when a $1\%$ outbreak is detected in the first community,\footnote{Although the $1\%$ intervention threshold may appear conservative, it is expressed in terms of the \emph{true} cumulative infections, which typically exceed the cases that are detected or reported and grow exponentially during the early phase. Even under ideal surveillance---where cases are perfectly identified and reported and a travel ban is imposed promptly---the ban's effectiveness still hinges on whether the earliest infectious individuals have already traveled. By contrast, social-distancing measures remain robust: once introduced, they limit transmission irrespective of when they start (except for the infections that have already occurred).} and that intra-community interventions have the effect of reducing the rate of infection by a multiplicative factor of $5$ (any factor that reduces the effective $R_0$ to below $1$ in community 2 suffices). We include additional robustness checks in the SI Appendix, where we run our SIR dynamics on contact networks taking into account degree assortativity and disassortativity, and separately on contact networks fit to phone network data from the Copenhagen Network Study. Those simulation results qualitatively match the ones above and in all cases, the message remains clear: intra-community interventions that reduce the effective reproduction rate in the second community will successfully prevent a noticeable outbreak, while a travel ban is ineffective in this respect and only delays the peak by a minor amount.

One could also consider the regime of very sparse travel (for example, when the travel rate scales as $\Theta(1/n)$), which is similar in spirit to the setting studied in \cite{ball2024epidemic}. This regime differs slightly from our current model in that even without intervention, infections may not always reach the second community, due to the infrequent travel. We conjecture that in the sparse setting, a travel ban could modestly reduce the likelihood of cross-community transmission, though not eliminate it completely. Nevertheless, we expect that at a high level the core insights persist: when infections are seeded in a second community, interventions that reduce within-community transmission remain substantially more effective than travel bans.

Finally we note that, since our results are asymptotic, if $n$ is small enough, then even in our setting travel may be infrequent enough such that a travel ban would reduce or eliminate cross-community transmission. Indeed, when the infection in community 1 has reached size $\epsilon n$, the expected number of seeds in community 2 is of order $\tilde O(\epsilon n \rho_T)=\tilde O(\epsilon n^{1-\alpha})$. Thus when $n$ is small enough and infections are detected at a sufficiently low threshold $\epsilon$, it is possible that no infections have been seeded in community 2 at time $\tau_1(\epsilon)$, making travel bans effective if implemented  strictly. However, for large communities this will not be the case, and our simulations show that for a population of size $10^7$ with $\epsilon = 0.01$ and $\alpha=0.5$, $n$ is already large enough such that a travel ban is no longer effective.

\section{Proof Overview}

We will prove Theorems~\ref{thm_sir_tcm_non_int}, \ref{thm_travel_ban} and \ref{thm_social_distance} in the supplement.  Here we give a brief overview, concentrating mainly on the intuition for why one should expect these theorems to hold.  We start with Theorem~\ref{thm_sir_tcm_non_int}.

To understand the early phases of an epidemic seeded in community 1, we note that as long as the epidemic is small, say of order at most $O(\ln n)$, the chance that one of the infected individuals travels after being infected is bounded by
$\tilde O(n^{-\alpha})$.  Thus in the early stages, the epidemic evolves essentially as if there was just one community, so in particular, the probability that it  grows to $\ln n$  is given by the solution $\pi$ to \eqref{eqn_theta_def}, the probability of a large outbreak in  a single community. Furthermore, in these early stages, the infection grows at the single community rate $\lambda$, showing that as long as $\lambda t\ll \ln n^\alpha$, the size in community $1$ is much smaller than $n^{\alpha}$, making it still unlikely that any of the infected individuals will travel.

Once the infection passes size $n^{\alpha}$, it becomes likely that one of the infected individuals travels after being infected (or that the infection hits a traveler who then returns to their home community).  In either case, these infected individuals serve as seeds in 
community 2, each having constant probability of contributing to $I_2(t)$.  When the number of these attempts diverges, say like $(\ln n)^2$, we will w.h.p. get a contribution to $I_2$, explaining why $\tau_{1\to 2}$ behaves like $\frac 1\lambda \ln n^\alpha$.  

After this point, the infection continues to grow exponentially in community 1, but it also starts to grow exponentially in community 2, in both cases at rate $\lambda$.  While at some point, one might expect travel in the opposite direction to create new seeds in community 1, when that happens, the epidemic in community 1 is already large, and the new seeds contribute negligible to the overall growth in community 1. This shows that it will take roughly time $\lambda^{-1}\ln (\eps n)=\lambda^{-1} \ln n (1+o(1))$ to grow to size $\epsilon n$, showing that we should expect $\tau_1(\epsilon)$
to be roughly equal to $\lambda^{-1} \ln n$.

Between time $\tau_{1\to 2}$ and time $\tau_1(\epsilon)$, the epidemic grows exponentially in both communities at rate $\lambda$, with the size in community $2$ behind by a factor of $n^{-\alpha}$.  Thus, seeds by travel (which happens at rate $\Theta(n^{-\alpha}))$ and the exponential growth of the original seeds contribute the same order of magnitude to the growth of the infection. Since this phase takes time of order $\frac {1-\alpha}\lambda\ln n$, one might expect slightly faster growth in community $2$, possibly multiplying the size of the epidemic by a factor of order $\ln n$.  However, once the epidemic in community $1$ has reached size $\epsilon n$, it will reach herd immunity in constant time and die out, at which point all that happens is exponential growth in community 2 at rate $\lambda$.  Thus, it should take between time roughly 
$\lambda^{-1} \ln \big(\frac {\eps n}{\ln n}\big)$ and 
$\lambda^{-1} \ln (\eps n)$ to grow the epidemic from size of order $1$ to size $\epsilon n$ in community 2. Since the difference between the two is of lower order, we expect $\tau_2(\epsilon)$ to be roughly  $\tau_{1\to 2}$ plus $\lambda^{-1}\ln n$, explaining the scaling of $\tau_2(\epsilon)$.

Note that the above arguments also explain why travel restrictions don't help much once the epidemic is noticeable in community 1.  At this point, the outbreak in community is already of order $n^{1-\alpha}$ (possibly slightly larger, by a factor of $\ln n$), large enough to make the epidemic grow in community 2 until it has reached its final size.  Travel restrictions might reduce the additional seeds after that time a little, but at this point, the growth in community 2 is still exponential, while community 1 can't grow by more than a constant factor. This shows that the effect on the time (which is logarithmic in the seed size) is of constant order, i.e., of much lower order than the leading order $\ln n$.  To understand our final statement, the fact that the overall outbreak size is only influenced minimally, we note that as long as the seed of an infection is sublinear, the final size of an epidemic does not depend on the seed size, as long as it is large enough to lead to large outbreak with high probability.

To make the above considerations rigorous, we proceed in three steps to prove Theorem~\ref{thm_sir_tcm_non_int}, which we now explain.  After that, we also give a short overview of the proofs of Theorem~\ref{thm_travel_ban} and \ref{thm_social_distance}.

\paragraph{One community lower bounds}
As a first step, we establish a lower bound on the size of the outbreak in community 1 by only considering individuals which never travel.  More precisely, since we expect the total time of the epidemic to be of order $\ln n$, we exclude everyone who travels up to time $T = \ln^2 n$.
Since the number of travelers up to this time is bounded in probability by $ O(n^{1-\alpha}\ln^2n)$ this gives a lower bound on the size of the epidemic in community 1 by that on $G(n',p)$ where $n'=n-\tilde O(n^{1-\alpha})$.  Asymptotically, these bounds are indistinguishable from those for $G(n,p)$, giving us an upper bound on both $\tau_1(\epsilon)$ and the time it takes for the epidemic to grow to, say, size $n^{\alpha+\delta}$ in community 1, where $\delta$ is a small constant. Once the epidemic has grown to size $n^{\alpha+\delta}$ in the group of non-travelers in community 1, the 
non-travelers infected by that time had enough contacts with travelers for them to seed, say, $n^{\delta/2}$
infections among the non-travelers in community 2,
giving in particular an upper bound on $\tau_{1\to 2}$; furthermore, having established enough seeds in the community of non-travelers of type 2,
we can now  bound the subsequent growth in community $2$ from below by that of an isolated community on $G(n',p)$, leading to the desired upper bound on $\tau_2(\epsilon)$, and a lower bound on the final sizes in both communities.

\paragraph{2-type branching process upper bounds} Next, we use the well-known fact that a branching process constructed by running the same SIR dynamics but ignoring the depletion of susceptible individuals serves as an upper bound for the number of infected and recovered vertices for all times. It is less clear whether this is still true for dynamic graphs, since speeding up earlier parts of the time evolution may change where infections are possible: e.g., individuals that could have been infected in one community may now be traveling, leading to an unexpected slowdown instead of a speedup. It turns out that a careful coupling of (a) the exploration of the random graph over the two communities (b) the travel dynamic, and (c) the dynamics of the epidemic does give a valid branching process upper bound for all times.  The analysis of this branching process will lead us to consider a 16-dimensional rate matrix, which we then analyze by perturbation theory around the dynamics without travel. While the branching process eventually gives a pessimistic upper bound, for determining the stopping times $\tau_{1\to 2}$, $\tau_1(\epsilon)$ and $\tau_2(\epsilon)$, the relevant bounds are off by only additive terms which grow slower than the leading $\ln n$ behavior and suffice for our purposes.

\paragraph{Herd immunity and the final size of the epidemic.}
While our branching process upper bounds are tight enough to establish the asymptotics of the stopping times $\tau_{1\to 2}$, $\tau_1(\epsilon)$ and $\tau_2(\epsilon)$, they become non-informative  as we move past these times and do not provide any upper bounds on the final size of the epidemic. To deal with this fact, we loosely decompose the total number of infections into two parts: (1) travelers who get infected, and (2) the spread contained within each of the two non-traveling populations (possibly starting with a transmission from a traveler). The contribution of the first part is limited since the total number of travelers is of order $\tilde O(n^{1-\alpha})$ up to time $T = \ln ^2 n$. With travelers isolated, the spread within the non-traveling parts can be coupled to an SIR process within a single community, except that we may have multiple initial seeds due to contact with travelers. But it is well known that the relative final size of an SIR epidemic on $G(n,p)$ converges to $r_\infty$ as long as the number of seeds goes to infinity in a sublinear fashion. Since we expect at most $\tilde O(n^{1-\alpha})=o(n)$ seeds up to time $T$ (the number of travelers is of this order and each has, in expectation, a constant number of neighbors), this shows that the asymptotic final size is of order $r_\infty n$ plus lower order corrections.

To conclude this argument, we only need to show that the infection dies out by time $T = \ln^2 n$. This relies on techniques developed in \cite{borgs2024lawlargenumberssir}, which are based on the the intuition of herd immunity towards the end of the epidemic. By time $C \ln n < T$ for some sufficiently large constant $C < \infty$, the level of infection in both communities can be controlled to be low as desired (as our lower and upper bounds are converging at later times), and the remaining fraction of susceptible individuals is too small to sustain further growth. Using a subcritical branching process to bound the dynamics in the remaining time, we show that the number of actively infected individuals decreases exponentially and thus drops to zero in $O(\ln n)$ time, ending the epidemic before $T$ as claimed.

\paragraph{Travel bans.}
To prove Theorem~\ref{thm_travel_ban}, we first note that our lower bound never used the additional seeds after $\tau_{1}(\epsilon)$, in fact, we entirely ignored travels beyond the time $\lambda^{-1}(\alpha+o(1))\ln n $ (which is before $\tau_1(\eps)$ w.h.p.) in the proof anyway.
    
    For the upper bound on the sizes of outbreak in both communities, the analysis is analogous to the proof in the presence of travel.  A branching process with growth rate $\lambda$ serves as an upper bound until the outbreak reaches a linear size; the same argument as before gives $r_\infty$ as an upper bound on the final fractional sizes of outbreak in both communities; and finally the arguments from \cite{borgs2024lawlargenumberssir} again provide an upper bound on the elapsed time of the outbreak using the fact that we have reached herd immunity.
    
\paragraph{Intra-community Interventions.}
Suppose that we implement an intra-community intervention in community 2 at time $\tau_1(\epsilon)$. Since the infection in community 1 dies out in time $O(\ln n)$, only $\tilde O(n^{1-\alpha})$ vertices from community 1 travel to community 2 before the infection dies out in community 1. However, since the effective $R_0$ in community 2 is smaller than $1$ due to the intervention, any newly seeded infection tree dies out exponentially quickly and only infects a constant number of individuals in community 2. Thus, only $\tilde O(n^{1-\alpha})$ individuals in community 2 will ever be infected and the final size of the infection in community 2 is sublinear in the number of individuals.

\section*{Acknowledgements}
Karissa Huang acknowledges support from the National Science Foundation under grant DGE 2146752.

\bibliographystyle{alpha}
\bibliography{ref}

\newpage

\appendix
\section*{Appendix}
In this appendix, we rigorously prove the results from the main text, first establishing our results on the dynamics of the epidemics without interventions (Theorem \ref{thm_sir_tcm_non_int} from the main text). As explained in Section \ref{sec:results} of the main text, we will do this in three steps:
\begin{itemize}
    \item First, we will use known results for the dynamics of SIR on $G(n,p)$ to establish upper bounds on the final size, and
    lower bounds on the growth $I_i+R_i$ for both types by considering the subset of individuals which never travel.  This will give upper bounds on the three quantities $\tau_{1\to 2}(\epsilon)$, $\tau_2(\epsilon)$, and $\tau_2(\epsilon)$ matching the expected behavior of these quantities.
    \item Next, we derive upper bounds on this growth by constructing a suitable multi-type branching process which allows us to establish the matching lower bounds on 
    $\tau_{1\to 2}(\epsilon)$, $\tau_2(\epsilon)$, and $\tau_2(\epsilon)$.
    \item Finally, we will prove that the epidemics lasts at most time $O(\log n)$ by exploiting that eventually, the epidemic hits the herd immunity threshold.
\end{itemize}

To prove our lower bounds, we will rely on various properties of the SIR dynamics on a single $G(n,p)$ random graph.

\section{Preliminaries}

\subsection{Single Community Results}
To state the needed single community results, we recall our notation $R_0=\frac {c\beta}{\beta+\gamma}$ for the basic reproduction number and  $\lambda =(c-1)\beta -\gamma$  for the exponential growth rate.   We will use $T_{\Pois(c)}$ to denote a random tree with Poisson $c$ off-spring, and $\BPsub{c}(t)$ to denote the infection tree at time $t$ of an SIR infection on $T_{\Pois(c)}$ starting at the root,  we define  $r_\infty$ and $\pi$ to be the survival probabilities of the branching processes $T_{\Pois(R_0)}$ and  $\BPsub{c}(t)$.

Next, we note that if we start an infection with at least $\epsilon n$ infected vertices, then the SIR dynamic obeys a law of large numbers which can be expressed in terms of the solution of the following set of differential equations
\begin{equation}\label{SIR-diff}
\frac {ds}{dt}=-\beta xs
,\quad
\frac {dx}{dt}= -(\beta+\gamma)x +\beta c x s 
,\quad
\frac{di}{dt}=\beta c x -\gamma i
, \quad
\frac {dr}{dt}=\gamma i
\end{equation}
where $s$, $i$ and $r$ represent the limit of $\frac 1n S(t)$, $\frac 1n I(t)$ and $\frac 1nR(t)$ and $x$ is the limit of $\frac 1n X(t)$, where $\beta X(t)$ is the force of the infection, i.e., the rate at which a random vertex in $S$ gets infected, see, e.g., 
\cite{ball2020epidemics,borgs2024lawlargenumberssir} for the derivation of this equations and the proof of the law of large numbers.

Finally, 
we define two stopping times: the time $\tau(\epsilon)$ when the infection has reached relative $\epsilon$ where $0<\epsilon<r_\infty$, 
\begin{equation}\label{tau-single}
      \tau(\epsilon) := \inf\{t > 0 : S(t) < (1-\epsilon)n\},
\end{equation}
and the time $\tau_\infty$ when the 
infection dies out,
\begin{equation}\label{tau-infty-single}
    \tau_\infty=\sup\{t>0: I(t)>0\},
\end{equation}
both for an infection starting from a single infected individual at time $0$.
We call $R(\infty)$  the \textit{size of the outbreak}, and say it is a \textit{large outbreak} if $R(\infty)\geq \ln n$.

\begin{theorem}\label{thm:single-comm}
Consider the SIR epidemic spreading on an Erd\H{o}s--R\'enyi random graph $G(n, c/n)$,  Assume there is one initially infected vertex unless stated explicitly otherwise.
Then the following hold:
\begin{enumerate}[label=(\alph*)]
    \item\textbf{(Continuity of $\mathbf r_\infty$ and $\mathbf \pi$)} Both $r_\infty=r_\infty(R_0)$  and $\pi=\pi(c,\beta,\gamma)$ are continuous functions taking values in $[0,1)$,  both are strictly positive if and only if $R_0> 1$. For $R_0>1$, the  herd immunity threshold $s_0=1/R_0$ is strictly larger than $s_\infty=1-r_\infty$.
    
    \item \label{bullet:single_comm_prob_outbreak}\textbf{(Probability of outbreak)} The probability of a large outbreak of at least $\ln n$ vertices in the graph converges to $\pi$, i.e., 
    \begin{equation}
        \Pr(R(\infty) \ge \ln n) \to \pi \text{ as }n\to\infty.
    \end{equation}
    If $R_0>1$ and $\epsilon\in (0,r_\infty)$ then
    \begin{equation}
        \lim_{n\to\infty} \Pr(R(\infty) \ge \epsilon n \mid R(\infty) \ge \ln n) = 1.
    \end{equation}

    \item \textbf{(Final size)} \label{bullet:single_comm_final_size}
   Conditioned on not having a large outbreak, $R(\infty)$ 
     is bounded in probability.  If $R_0>1$ then conditioned
  on a large outbreak, we have
    \begin{equation*}
        \frac{R(\infty)}{n} \overset{p}{\to} r_\infty.
    \end{equation*}

    \item\textbf{(Initial growth)}\label{bullet:single_comm_init_growth}
    The SIR epidemic on $G(n,c/n)$ and the branching  processes $\BPsub{c}$  and $\BPsub{c(1-\epsilon)}(t)$ can be coupled in such a way that the following holds whenever $N_n=o(\sqrt n)$.  
        \[
            \Pr\left(\Big||\BPsub{c}(t)| = (I(t)+R(t))\Big| \text{ for all } t \text{ s.t. } |\BPsub{c}(t)| \leq N_n)\right) \to 1\quad\text{as}\quad n\to \infty.
        \]
        Furthermore, 
    for $R_0>1$ and any $a\in(0,1)$, conditioned on a large outbreak, we have 
    \begin{equation*}
        \frac{\tau(n^{a-1})}{\lambda^{-1}\ln n} \overset{p}{\to} a.
    \end{equation*}

    \item\textbf{(Duration of the epidemic)}\label{bullet:final-time}
    Conditioned on not having a large outbreak  $\tau_\infty$ is bounded in probability.  If $R_0>1$ and we condition on a large outbreak then $(\ln n)^{-1}\tau_\infty$ is bounded in probability.
    
    \item\textbf{(Growth of a linear-sized infection)}\label{bullet:single_comm_diff_eq} 
    Assume $R_0>1$.  Then there exist constants $\epsilon_0>0$ and $0<C_1<C_2<\infty$ such that for $0<\epsilon\leq \epsilon_0$, conditioned on a large outbreak, w.h.p.  $C_1\epsilon n \leq X(\tau(\epsilon))\leq C_2\epsilon n$.
    Furthermore, conditioned on the state of the infection at time $\tau(\epsilon)$, the evolution of the SIR dynamic is well described by set of equations \eqref{SIR-diff}, in the sense that 
    $$\sup_{t\geq \tau(\epsilon)}\left|\frac 1n(S(t),X(t), I(t)))-(s(t), x(t), i(t))\right| \overset{p}{\to}0.$$

    \item\textbf{(Concentration for $\tau$)}\label{bullet:single_comm_tau_concentration} Suppose that $R_0>1$, $I(0) = 1$, and let $0<\epsilon'<\epsilon_0$ and $\epsilon'<\epsilon<r_\infty$. Then there exists $K=K(\epsilon', \epsilon)$ such that conditioned on a large outbreak,     
    w.h.p. $\tau(\epsilon) - \tau(\epsilon')< K$. In particular, combined with \ref{bullet:single_comm_init_growth}, this implies that conditioned on a large outbreak
    \begin{equation*}\label{eqn_T_limit}
        \frac{\tau(\epsilon)}{\lambda^{-1}\ln n} \overset{p}{\to} 1.
    \end{equation*}

    \item\textbf{(Many initially infected vertices)}\label{bullet:LargeI0} 
    Assume $I(0)\to\infty$ and $I(0)=o(n)$,  with the initially infected vertices chosen independently from the edge set of $G(n,c/n)$.  
    Then 
   \begin{equation*}
        \frac{R(\infty)}{n} \overset{p}{\to} r_\infty
    \end{equation*}

\end{enumerate}

\end{theorem}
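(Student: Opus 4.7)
The plan is to leverage the single-seed results \ref{bullet:single_comm_prob_outbreak}--\ref{bullet:single_comm_diff_eq} by showing that, with $k \defeq I(0) \to \infty$ seeds, the process reaches the linear regime $\tau(\epsilon) < \infty$ with high probability, and then invoking the ODE \eqref{SIR-diff} to conclude that the final size converges to $r_\infty$ independently of $k$. The hypothesis that the seeds are chosen independently of the edge set is crucial here, since it ensures each seed has the same marginal distribution of local neighborhood as a typical vertex.

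For the first step, I would expose the seeds $v_1, v_2, \ldots$ sequentially: start the SIR from $v_1$ in the graph $G(n,c/n)$ and run it until it either dies out or reaches an intermediate size $M_n = \lceil\log n\rceil$; if it dies out, restart from $v_2$ in the depleted graph, and so on. By part \ref{bullet:single_comm_init_growth}, each sequentially-exposed seed survives (reaches size $M_n$) with probability $\pi + o(1)$, conditional on all previous failures. Part \ref{bullet:single_comm_final_size} bounds each failed seed's contribution to $R$ by $O_p(1)$, so after $k' \defeq \lceil\log\log n\rceil$ exposures the cumulative depletion is $o(\sqrt n)$, still well within the $o(\sqrt n)$ slack of the branching-process coupling in part \ref{bullet:single_comm_init_growth}. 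Hence the probability that none of $v_1,\ldots,v_{k'}$ survives is bounded by $(1-\pi+o(1))^{k'}\to 0$, and on the complement some surviving branch of size $M_n$ continues to grow to $\epsilon n$ with high probability by part \ref{bullet:single_comm_prob_outbreak}. This establishes $\tau(\epsilon) < \infty$ w.h.p.\ for any fixed $\epsilon\in(0,\epsilon_0)$.

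Once $\tau(\epsilon) < \infty$, part \ref{bullet:single_comm_diff_eq} gives uniform convergence of $n^{-1}(S,X,I)$ on $[\tau(\epsilon),\infty)$ to the solution of \eqref{SIR-diff}, which deterministically reaches some final fraction $r_\infty(\epsilon)$. Since $k = o(n)$, the scaled initial condition at $\tau(\epsilon)$ agrees in the fluid limit with the single-seed large-outbreak case, so $r_\infty(\epsilon)$ coincides with the value appearing in part \ref{bullet:single_comm_final_size}; sending $\epsilon\to 0$ along a suitable sequence (using $r_\infty(\epsilon)\to r_\infty$) yields $R(\infty)/n \pto r_\infty$. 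The main obstacle is the conditional independence of the sequentially-exposed branching processes---depletion of the graph by failed seeds could in principle bias the survival probabilities of later ones---but this is controlled by the $O_p(1)$ bound on failed-seed sizes combined with the $o(\sqrt n)$ slack in the coupling of part \ref{bullet:single_comm_init_growth}, which keeps all $k'$ couplings valid simultaneously.
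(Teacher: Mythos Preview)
Your proposal addresses only part~\ref{bullet:LargeI0}. The paper does not actually prove (h): it defers it (along with most of (a)--(f)) to the literature as standard, reserving detailed argument only for the second statements in (d), (e) and both statements in (g). So there is no paper-side proof of (h) to compare against; you are supplying a self-contained derivation from the earlier parts.

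The outline is sound, and your sequential-seed lower bound is exactly the technique the paper itself later deploys in proving Lemma~\ref{lem:singlecomm-multipleseeds}. Two points need more care. First, the process you describe---expose $v_1$, if it dies restart from $v_2$, etc.---is not the actual process in which all $k$ seeds are infected at time $0$; to transfer the lower bound you need the infection-digraph monotonicity (the set reachable from $\{v_1,\dots,v_k\}$ in $\mathcal D_G^{SIR}$ contains the set reachable from any single $v_i$), and to make the successive survival probabilities rigorously close to $\pi$ you need something like the paper's device in the proof of Lemma~\ref{lem:singlecomm-multipleseeds}: pad the used set to a fixed size and declare artificial failure if the new exploration touches it, which cleanly decouples the attempts. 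Your appeal to the $o(\sqrt n)$ coupling slack is the right intuition but is a sketch. Second, part~\ref{bullet:single_comm_diff_eq} as stated is conditioned on a single-seed large outbreak; to invoke it here you must check that with $k=o(n)$ simultaneous seeds the state $(S,X,I)/n$ at $\tau(\epsilon)$ still lies in the regime where the fluid limit applies with the same terminal value---in particular that the $k$ extra seeds contribute only $o(n)$ active half-edges to $X(\tau(\epsilon))$. This holds but warrants a sentence.
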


\begin{proof}
Except for the last statements in \ref{bullet:single_comm_init_growth} and \ref{bullet:final-time} and both statements in  \ref{bullet:single_comm_tau_concentration},
    the proofs of the rest of the statements
        are standard and can be found in the literature, e.g. in \cite{neal2003sir, ball2020epidemics, borgs2024lawlargenumberssir}. 
        
        The proof of the last statement of \ref{bullet:single_comm_init_growth}, that conditioned on a large outbreak we have concentration for $\tau(n^{\alpha-1})$, follows from standard branching process theory once we make the following two observations:
        \begin{enumerate}[label=(\alph*)]
            \item we can upper bound $I(t)+R(t)$ by the size of an infection tree on the random tree $T_{\Bin(n,p)}$ with $\Bin(n,p)$ off-spring distribution
            \item we can  lower bound $I(t)+R(t)$ by the size of the infection tree on $T_{\Bin(n',p)}$ up to the stopping time when $I(t)+R(t)$ first exceeds $n-n'$.
        \end{enumerate}    
        The proof of  \ref{bullet:single_comm_tau_concentration} follows by combining these arguments with the law of large numbers behavior of the process. Specifically,  combining the upper and lower bounds on $I(t)+R(t)$ with standard branching process theory, we get that for $0<\delta<1$, whp, 
    \[
     (1-\delta)\lambda^{-1}\leq    \frac{\tau(\epsilon')}{\ln n}\leq \frac{1+\delta}{\lambda -c\beta\epsilon'}.
    \]
Finally, using the  bound $C_1\epsilon' n<X(\tau(\epsilon'))< C_2\epsilon' n$ from
 \ref{bullet:single_comm_diff_eq} and the law of large numbers for $S$ and $X$ it is straightforward 
 to prove that whp the epidemic will grow from size $I+R=n-S=\epsilon' n$ to $n-S\geq  \epsilon n$ in finite time, i.e., in time at most $K(\epsilon',\epsilon)<\infty.$  Sending first $n$ and then $\delta$ and $\epsilon'$ to zero one gets that 
    \[
        \frac{\tau(\epsilon)}{\lambda^{-1}\ln n} \pto 1.
    \]
The last statement of \ref{bullet:final-time} is easily proven by first noting that \ref{bullet:single_comm_tau_concentration} implies that it  takes time of order $\ln n$ to pass the herd immunity threshold and then using the arguments of, e.g., \cite{borgs2024lawlargenumberssir} that after passing the herd immunity threshold $X(t)$ and $I(t)$ decay exponentially in $t$.
  \end{proof}

Next, we recall several couplings for the SIR dynamics which will be useful in this paper.

\begin{enumerate}
\item \textbf{(Infection digraph)} \label{D-SIR-couplings}
This coupling is defined for an SIR infection on an arbitrary finite graph $G$, and involves the construction of an  infection digraph, to be denoted $\mathcal D_G^{SIR}$.  It is defined by drawing  recovery times $T_v\sim \Expo(\gamma)$ i.i.d. for all vertices $v$ as well infection times $T_{uv}\sim \Expo(\beta)$ i.i.d. for all  oriented edges $uv$ such that $\{u,v\}$ is an edge in $G$.  The infection digraph $\mathcal D_G^{SIR}$ is then defined by drawing an edge from $i$ to $j$ whenever $T_{ij}\leq T_i$, giving edges $ij$ a weight or length $T_{ij}$.   If we infect a single vertex $v$ at time $0$, the time the infection has reached vertex $w$ is then equal to the length, $t_{v\to w}$, of the shortest oriented path from $v$ to $w$ in $\mathcal D_G^{SIR}$.  More generally, in a setting where we infect several vertices, $w$  will get infected at time $t=\min_i(t_i+ t_{v_i\to w})$ where the minium goes over all seeds, and $t_i$ is the time vertex $v_i$ gets infected (from an external source).  

\item When $G$ is random itself, it is often useful to couple the randomness of $G$ and that of the SIR dynamics in a way which more tightly represents the course of the infection.  In the setting of this section, where
$G\sim G(n,c/n)$, this can be done in several ways.  
\begin{enumerate}
    \item {\bf(Poisson Coupling)} Here one first defines a multi-graph analog of $G(n,c/n)$ where edges have a multiplicity given by a Poisson random variable $\Pois(c/n)$.   Next, when a vertex gets first infected, one  attaches 
    $d\sim \Pois(c)$ half-edges to the infected vertex, each running an infection clock at rate $\beta$, until the vertex recovers.  When an infection clock on a half-edge clicks, we stop the Poisson clock, chose an endpoint for the half-edge uniformly among all vertices, and if this endpoint happens to be susceptible, infect the endpoint.  After establishing all desired results for the Poisson random graph, one finally conditions on the Poisson graph being simple to get the desired results for  $G(n,c/n)$.  This approach is used, e.g., when deriving the differential equations \eqref{SIR-diff}, with $X(t)$ representing the number of infection clocks which are running at time $t$; see \cite{ball2020epidemics,borgs2024lawlargenumberssir} for details.

\item  \textbf{(Binomial Coupling to Susceptible Neighbors)} In our context, the Poisson coupling is less useful, since it does not generalize well in the presence of travel clocks.  Here the most natural coupling proceeds by drawing the neighbors of an infected vertex when it first gets infected.  More precisely, we will draw the neighbors  from all currently susceptible vertices, including each of them independently with probability $c/n$, then draw the recovery time $T_v$ for the infected vertex, and run infection clocks at rate $\beta$ along the edges to susceptible vertices until the vertex recovers.  Removing edges that used to point to susceptible vertices as they get infected, active edges, i.e., edges with a running $\beta$ clock, will always point to susceptible vertices, and when one of them clicks, we infect the new vertex and iterate.  
Note that in this setting, when a vertex gets infected at time $t$, it starts with $d\sim \Bin(S(t),c/n)$ active edges, and when the first edge of an infected vertex clicks, its endpoint is uniform among the susceptible individuals at that time.

\item \textbf{(Binomial Coupling to all Vertices)} A variant of the last coupling draws a larger set of edges when a vertex first gets infected, namely i.i.d. with probability $c/n$ from all $n$ vertices including thus the possibility of a self-loop to the infected vertex itself. Infections along active edges should then be considered infection attempts, and will be ignored if the endpoint of such an infection is already infected.  An advantage of this coupling is that we could first draw the number $d$ of active half-edges according to $d\sim \Bin(n,c/n)$, and only commit which edges we choose once the first infection clock clicks, at which point we choose $d$ endpoints uniformly without replacement.  In this way, the first infection attempt will be uniformly among all $n$ vertices, which turns out to be useful for some of our bounds.
\end{enumerate}
\end{enumerate}

\noindent The next lemma gives upper and lower bounds on the spread of the SIR epidemic
in the case of multiple initial ``seed infections'' appearing at times $t_v$ that are not necessarily all equal (as they were in the setting of Statement~\ref{bullet:LargeI0} of the last theorem). It will be one of the key ingredients we use to tie the development of the epidemic in the two communities together.

\begin{lemma}[Outbreak in a single community with multiple seeds]\label{lem:singlecomm-multipleseeds}
    Consider an SIR epidemic over $G\sim G(n,c/n)$ with infection rate $\beta$ and recovery rate $\gamma$, assume $R_0>1$, and  let $\delta>0$ $\eps\in(0,r_\infty)$, and $a\in (0,1)$ be fixed, i.e., independent of $n$.  Assume that  the SIR dynamic evolves as before, except that at times $0\leq t_1<t_2<\dots$ we seed additional infections at some vertices $v_1,v_2,\dots$ in $V(G)$, with the distribution of both the seeds and the seed times possibly coupled to the dynamics of SIR on $G(n,c/n)$.
    \begin{enumerate}
        \item\label{multi-seed-upperbd} 
        If all external seeds lie in a possibly random set
         $\mathcal S$ which is chosen independently of the draw $G\sim G(n,c/n)$ and the recovery and infection clocks for SIR, and if $\frac 1n|\mathcal S|\pto 0$, then w.h.p.
        $$\frac{R(\infty)}{n} \leq r_\infty +\delta
        $$
        \item\label{multi-seed-lowerbd} If a subset of the external seeds are chosen i.i.d. uniformly at random in $[n]$ and by time $T$ there are at least $\omega_n\to\infty$ of these uniformly chosen, external seeds, then w.h.p.
        \begin{equation}\label{eq:seed_lemma_2}
            \tau(\eps) \le T + (1+\delta)\frac{\ln n}{\lambda},\quad
            \tau(n^{a-1}) \le T + (1+\delta)\frac{a\ln n}{\lambda},
            \quad\text{and}\quad \frac{R(\infty)}{n} \geq r_\infty -\delta.
        \end{equation}
    \end{enumerate}
\end{lemma}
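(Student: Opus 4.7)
The plan is to exploit the infection-digraph coupling of item~\ref{D-SIR-couplings}: once we fix $G\sim G(n,c/n)$ together with i.i.d.\ recovery times $T_v\sim\mathrm{Expo}(\gamma)$ and edge clocks $T_{uv}\sim\mathrm{Expo}(\beta)$, the ever-infected set is exactly the set of vertices forward-reachable from the seed set in $\mathcal D_G^{SIR}$. Consequently $R(\infty)$ depends only on \emph{which} vertices are seeded and not on when, while $\tau(\eps)$ and $\tau(n^{a-1})$ are monotone: they can only decrease when we add a seed or move a seed time earlier. Both parts will be obtained by sandwiching the dynamics of interest between two reference processes that fit directly into Theorem~\ref{thm:single-comm}.

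For Part~\ref{multi-seed-upperbd}, I would compare with the reference process on the same $G$ and the same clocks in which all of $\mathcal S$ is seeded at time $0$. By the monotonicity just described, the two processes share the same $R(\infty)$. Because $\mathcal S$ is independent of $G$ and the clocks, the reference process is an ordinary SIR on $G(n,c/n)$ whose $|\mathcal S|$ initial infections are chosen independently of the edges. Applying Theorem~\ref{thm:single-comm}\ref{bullet:LargeI0} when $|\mathcal S|\to\infty$, and Theorem~\ref{thm:single-comm}\ref{bullet:single_comm_final_size} together with a union bound over the at most $|\mathcal S|$ sub-outbreaks when $|\mathcal S|$ stays bounded, yields $R(\infty)/n \le r_\infty + \delta$ w.h.p.

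For Part~\ref{multi-seed-lowerbd}, let $\mathcal U\subseteq\mathcal S$ denote the uniformly random external seeds introduced by time $T$, so that $|\mathcal U|\ge\omega_n$. I would compare with the reference process on the same $G$ and clocks that uses \emph{only} the seeds in $\mathcal U$ and introduces \emph{all} of them at the later time $T$. By monotonicity, $R(\infty)\ge R^*(\infty)$ and $\tau(\cdot)\le \tau^*(\cdot)$. Time-shifted by $-T$, the reference process is an ordinary SIR starting with $\omega_n\to\infty$ i.i.d.\ uniform initial seeds, so Theorem~\ref{thm:single-comm}\ref{bullet:LargeI0} yields $R^*(\infty)/n\pto r_\infty$ and hence $R(\infty)/n\ge r_\infty-\delta$ w.h.p. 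For the timing bounds, the probability that none of the $\omega_n$ seeds in $\mathcal U$ produces a large outbreak when viewed as a single-seed SIR is at most $(1-\pi+o(1))^{\omega_n}\to 0$; pick one such $v^*\in\mathcal U$. By further monotonicity in seeds, $\tau^*(\cdot)$ is bounded above by $\tau(\cdot)$ of the single-seed SIR started only at $v^*$, which by Theorem~\ref{thm:single-comm}\ref{bullet:single_comm_init_growth} and \ref{bullet:single_comm_tau_concentration}, conditioned on its large outbreak, satisfies $\tau(n^{a-1})\le (a+\delta)\ln n/\lambda$ and $\tau(\eps)\le (1+\delta)\ln n/\lambda$ w.h.p. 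Shifting back by $+T$ delivers~\eqref{eq:seed_lemma_2}.

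\textbf{Main obstacle.} The delicate point is that $v^*$ is chosen in a way that depends on $G$ and the clocks, so to apply the single-seed theorems rigorously I would either (a) take a union bound over $v\in\mathcal U$, exploiting the independence of $\mathcal U$ from $(G,\text{clocks})$ so that each fixed candidate $v$ is distributionally the seed of an ordinary single-seed SIR, or (b) work directly with the giant out-component of $\mathcal D_G^{SIR}$, using the $o(1)$-equivalence between ``producing a large outbreak'' and ``landing in the giant out-component''. The remaining technical bookkeeping is to verify that conditioning on the large-outbreak event preserves the concentration statements of Theorem~\ref{thm:single-comm}, which is essentially the content of the branching-process upper and lower bounds already invoked in its proof.
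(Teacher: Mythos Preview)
Your Part~\ref{multi-seed-upperbd} argument is correct and essentially identical to the paper's: both use the infection digraph to reduce to a process with all seeds placed at time $0$ and then invoke Theorem~\ref{thm:single-comm}\ref{bullet:LargeI0}.

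For Part~\ref{multi-seed-lowerbd}, your argument for the final-size lower bound via Theorem~\ref{thm:single-comm}\ref{bullet:LargeI0} is fine. The gap is in the timing argument, specifically the line ``the probability that none of the $\omega_n$ seeds in $\mathcal U$ produces a large outbreak when viewed as a single-seed SIR is at most $(1-\pi+o(1))^{\omega_n}$.'' The events ``seed $v_i$ produces a large outbreak'' are \emph{not} independent: they all run on the same realization of $G$ and the same clocks, and the correlation can be arbitrarily strong in principle. You recognize this in your obstacle paragraph, but neither fix you propose closes the gap cleanly. Fix~(a) does not work because a union bound controls $\Pr(\bigcup\mathcal A_i)$ from above, not $\Pr(\bigcap\mathcal A_i^c)$; there is no union-bound-type inequality that turns marginal probabilities $1-\pi+o(1)$ into a product bound on the intersection. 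Fix~(b) is the right intuition, but it requires a concentration statement for the fraction of vertices with large forward-reachable set in $\mathcal D_G^{SIR}$ (equivalently, that the giant out-component has relative size $\pi+o(1)$ w.h.p.), which is not among the statements of Theorem~\ref{thm:single-comm} and would have to be proved separately.

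The paper handles this dependence differently and without extra structural input. It fixes a finite number $M$ of seeds and decomposes the bad event via
\[
\Pr\Big(\bigcap_{i=1}^M\mathcal A_i^c\Big)\le \Pr\Big(\bigcap_{i=1}^M\mathcal B_i\Big)+\sum_{i=1}^M\Pr(\mathcal A_i^c\setminus\mathcal B_i),
\]
where $\mathcal B_i$ is the event that the outbreak from $v_i$ alone dies before reaching a fixed size $L$. The second sum is $o(1)$ for each fixed $M$ by the single-seed theorems (this is your union bound, applied where it does help). The first term is controlled by a \emph{sequential exposure} of $\mathcal D_G^{SIR}$: conditioned on $\mathcal B_1,\dots,\mathcal B_{i-1}$, at most $(i-1)L$ vertices have been revealed, so the exploration from $v_i$ stochastically dominates a fresh single-seed SIR on $G(n-(i-1)L,c/n)$, giving $\Pr(\mathcal B_i\mid\mathcal B_1,\dots,\mathcal B_{i-1})\le 1-\pi/2+O(L/n)$ and hence $\Pr(\bigcap\mathcal B_i)\le e^{-M\pi/3}$. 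This is the missing idea: instead of asserting approximate independence, one manufactures it by re-randomizing the unexplored part of the graph after each small failure.
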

\begin{proof}
The first statement is a direct consequence of the representation of the epidemic in terms of the infection digraph $\mathcal D_{G}^{SIR}$
and the observation that in this representation, the final set of infected vertices is just the set of vertices that can be reached from the set of seeds.  Since this set does not depend on the infection times $t_v$, the statement follows from statement \ref{bullet:LargeI0} of the last theorem.

Next we note that if $\omega_n\to\infty$ of the vertices are infected by time $T$, that for any finite $M$, at least $M$ of them are infected by time $T$ (assuming $n$ is sufficiently large).  Thus all we need to prove is that for any given $\delta'>0$ and $M$ large enough, the probability that $\tau(\epsilon)\leq T+ (1+\delta)\ln n /\lambda$ is at least $1-\delta'$.
Let $\mathcal{A}_i$ be the event that the SIR epidemics with a single seed $v_i$ at time $t_i$ would satisfy statement \ref{eq:seed_lemma_2} in the lemma, and  let $\mathcal{B}_i$ be the event that the SIR epidemics with seed  $v_i$   would terminate before infecting $L$ individuals for some $L<\infty$ to be specified. Note that in the above coupling where we pre-sample the infection and recovery clocks,  we can ensure that $\bigcup_{i=1}^M \mathcal{A}_i$ implies the desired events: if any seed infection is capable of producing a large outbreak, then having additional seeds of infection only expedites the spread. Thus, it suffices to bound $\Pr(\bigcap_{i=1}^N \mathcal{A}_i^c)$ to be at most $\delta'$ for all $\delta'>0$.

Using inclusion-exclusion, we first bound
\begin{equation}\label{multi-seed-bound}
    \begin{aligned}
        \Pr\Big(\bigcap_{i=1}^M \mathcal{A}_i^c\Big) &= 
        \Pr\Big((\mathcal B_1\cap \mathcal A_1^c)\cap\bigcap_{i=2}^M \mathcal{A}_i^c\Big) +  \Pr\Big(({\mathcal B}_1^c\cap\mathcal A_1^c)\cap\bigcap_{i=2}^M \mathcal{A}_i^c\Big) \\
        &\leq   \Pr\Big(\mathcal B_1\cap\bigcap_{i=2}^M \mathcal{A}_i^c\Big) +  \Pr\Big(\mathcal{A}_1^c\setminus \mathcal B_i\Big)\\
        &\leq\cdots\leq  \Pr\Big(\bigcap_{i=1}^M \calB_i\Big) + \sum_{i=1}^M\Pr\Big(\calA_i^c\backslash\calB_i\Big),
    \end{aligned}
\end{equation}
and then note that by statements \ref{bullet:single_comm_final_size}, \ref{bullet:single_comm_init_growth}, \ref{bullet:final-time} and \ref{bullet:single_comm_tau_concentration} of the previous theorem, we can make $\Pr(\calA_i^c\backslash\calB_i)$ as small as desired by choosing $L$ and $n$ sufficiently large. Thus given $M$, for $L$ and $n$ sufficiently large, we can bound the second term on the right  by at most $\delta'/2$.

To bound the first term, we
first observe that for the first seed, the bad event $\mathcal B_1$ is just the event that $R(\infty)<L$ on the graph $G(n,c/n)$ with a single random seed.  By the fact that conditioned on not having a large outbreak, 
$R(\infty)$ is bounded in probability, and the fact that the probability of a large outbreak converges to $\pi$ in probability, we have that for $L$ and $n$ sufficiently large, $\Pr(R(\infty)<L)$ can be bounded by, say, $1-\pi/2$.  It turns out we need slightly more, namely that
\begin{equation}\label{Rinfty-on-Gn'}
\Pr(R(\infty)<L)\leq 1-\frac \pi 2\quad\text{on}\quad G(n',c/n)\quad\text{whenever} \quad n'\geq n-\ln n
\end{equation}
and $L$ and $n$ are sufficiently large.  To see that this holds, we first note that the probability $\Pr(R(\infty)<L)$ is decreasing in $n'$, showing that $n'=n-\ln n$ is the worst case.  The proof now follows once we use that $G(n',c/n)=G(n',c'/n')$ with $c'=cn'/n$, together with the fact that $\pi(c',\beta,\gamma)$ is continuous in $c'$.

To prove an upper bound on the first term in \eqref{multi-seed-bound}, we now use the following coupling in which we sequentially expose the set of vertices
$C_+(v_i)$ that can be reached from the seed $v_i$ 
in $\mathcal D_G^{SIR}$ by the usual breadth first search starting from $v_i$. In this process, whenever a vertex $v$ gets infected, we first draw its recovery clock, then draw the edges to the remaining susceptible vertices i.i.d. with probability $c/n$, and then draw the infection clocks for these edges, keeping only those edges for which the infection clock clicks before the recovery clock, and declaring the end point infected. We declare success if at any point in this process, we have discovered at least $L$ infected vertices. We declare failure at step $i$ if the process dies before infecting $L$ vertices; in this case, we conclude that $|C_+(v_i)|<L$, declare the vertices in $C_+(v_i)$ used, and move on to the next seed $v_{i+1}$. Furthermore, since we are interested in an upper bound on the probability of failure event 
$\mathcal B=\bigcap_{i=1}^M \calB_i$, we will augment the definition of failure by
\begin{enumerate}
    \item adding an additional $k=L-|C_+(v_i)|$ random vertices to the set of used vertices and declaring failure if the new seed $v_{i+1}$ falls into the set of used vertices
    \item declaring failure if the BFS process starting from $v_{i+1}$ discovers one of the used vertices.
\end{enumerate}
In this way, the failure probability in step $i$ (assuming failure in all previous steps) can be bounded from above by $\frac{(i-1)L}n$ plus the probability that a BFS exploration on $\mathcal D^{SIR}_{G(n-(i-1)L,c/n)}$ fails before reaching $L$ vertices, which in turn is nothing but the probability that $R(\infty)<L$ on $G(n-(i-1)L,c/n)$.
Combined with the bound \eqref{Rinfty-on-Gn'}, we see that
$$\Pr\Big(\bigcap_{i=1}^M \calB_i\Big)\leq \prod_{i=1}^M\Big(1-\frac\pi 2+\frac{(i-1)L}n\Big)\leq \exp\left(-\sum_{i=1}^M\Big(\frac\pi 2- (i-1)\frac Ln\Big)\right)\leq e^{-M\pi/3}
$$
provided $\frac{LM}n\leq \pi /6$.
Choosing first $M$ large enough so that the right hand side is smaller than $\delta'/2$, and then $L$ and $n$ large enough to guarantee that the bound \eqref{Rinfty-on-Gn'} holds and
the second term in \eqref{multi-seed-bound} is smaller than $\delta'/2$ and finally increasing $n$ if needed to make sure that $\frac{LM}n\leq \pi /6$ and $ML\leq \ln n$, we obtain the desired bound $\Pr(\bigcap_{i=1}^N \mathcal{A}_i^c) \le \delta'$.
\end{proof}

We will also need the following lemma, which states that a large fraction of the early infected nodes will be infected for a substantial time. This fact will be used to show that
once the infection in the first community has grown to size roughly $n^\alpha$, with high probability it will seed enough infections in the second community to apply Lemma~\ref{lem:singlecomm-multipleseeds}.

\begin{lemma}\label{lem:emprical-distribution-Gnp}
Consider an SIR epidemic over $G\sim G(n,c/n)$ with infection rate $\beta$ and recovery rate $\gamma$, assume $R_0>1$,  let $a\in (0,1)$, let $0<\delta<a$, and let $x$ be an arbitrary positive number. Conditioned on a large outbreak, we have that w.h.p. $\tau(n^{a-1}) < \infty$ and among these first $\lceil n^a \rceil$ infected nodes, at least $n^{a - \delta}$ will have infectious periods of at least $x$ (i.e., having a recovery time at least $x$). 
\end{lemma}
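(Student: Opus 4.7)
The plan is to run the exploration via the ``Binomial Coupling to Susceptible Neighbors'' from item~(2b) and to pre-commit the sequence of recovery times to an i.i.d.\ source. Under this coupling the problem reduces to a one-line Chernoff bound on i.i.d.\ Bernoullis.

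The only nontrivial input I would need from earlier in the paper is statement~\ref{bullet:single_comm_init_growth} of Theorem~\ref{thm:single-comm}, which gives $\tau(n^{a-1})/(\lambda^{-1}\ln n)\pto a$ conditional on a large outbreak; in particular $\tau(n^{a-1})<\infty$ w.h.p., so that the first $\tilde N:=\lceil n^a\rceil$ infected vertices $v_1,\dots,v_{\tilde N}$ are well-defined with high probability. Now I would invoke the coupling in~(2b): when a vertex first becomes infected, its recovery clock $T_v\sim\Expo(\gamma)$ is drawn afresh and independently of everything already revealed. Hence we may pre-draw an i.i.d.\ sequence $E_1,E_2,\dots\sim\Expo(\gamma)$ and set $T_{v_i}=E_i$ as each successive $v_i$ appears. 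The indicators $B_i:=\mathbbm{1}[E_i\ge x]$ are then i.i.d.\ Bernoulli with parameter $p:=e^{-\gamma x}>0$ that is \emph{independent of} the edges of $G$, the infection clocks, and the process determining the identities of $v_1,v_2,\dots$, and hence also of the event of a large outbreak.

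A standard multiplicative Chernoff bound then yields
\[
    \Pr\Bigl(\sum_{i=1}^{\tilde N}B_i<\tfrac p2\tilde N\Bigr)\le \exp(-p\tilde N/8)=\exp(-\Theta(n^a))\to 0.
\]
On the event $\tau(n^{a-1})<\infty$ the recovery times $T_{v_1},\dots,T_{v_{\tilde N}}$ coincide with $E_1,\dots,E_{\tilde N}$, so with high probability at least $\tfrac p2 n^a$ of the first $\tilde N$ infected vertices have infectious periods of length at least $x$. Since $p$ is a positive constant and $\delta>0$ is fixed, $\tfrac p2 n^a\ge n^{a-\delta}$ for all sufficiently large $n$, which is the desired lower bound.

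I do not foresee any serious obstacle. The only subtlety worth flagging is that the pre-drawn coupling requires the identity of each $v_i$ to be determined \emph{before} $T_{v_i}$ is ever consulted; this is automatic from the order in which information is exposed in item~(2b), where a vertex's neighbors and then its recovery clock are drawn only after that vertex is first infected. In particular, no branching-process coupling and no control of the joint distribution of the infected set is needed---the recovery times are i.i.d.\ by construction of the coupling, and the argument is a clean large-deviations estimate.
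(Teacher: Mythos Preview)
Your approach is essentially the same as the paper's: both exploit that the recovery times of successively infected vertices form an i.i.d.\ $\Expo(\gamma)$ sequence and then apply a concentration bound to the resulting Bernoulli indicators. The paper phrases this via an augmented process that keeps reseeding after each extinction so that the first $\lceil n^a\rceil$ infections always exist, while you pre-draw an infinite i.i.d.\ sequence $(E_i)$ and only identify it with actual recovery times on the event $\{\tau(n^{a-1})<\infty\}$; the footnote in the paper's proof is exactly your coupling.

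One correction: your assertion that the $B_i$ are independent of ``the process determining the identities of $v_1,v_2,\dots$, and hence also of the event of a large outbreak'' is false---the recovery time $E_1$ of the initial vertex certainly influences whether the epidemic survives, and more generally the trajectory of the infection (hence which vertex is $v_i$) depends on $E_1,\dots,E_{i-1}$. Fortunately you do not need this independence. The unconditional Chernoff bound $\Pr\bigl(\sum_{i\le\tilde N}B_i<\tfrac p2\tilde N\bigr)\to 0$ already suffices: since $\Pr(\text{large outbreak})\to\pi>0$, the conditional probability of this bad event also tends to $0$, and combining with $\Pr(\tau(n^{a-1})<\infty\mid\text{large outbreak})\to 1$ from Theorem~\ref{thm:single-comm}\ref{bullet:single_comm_init_growth} finishes the argument. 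The paper carries out precisely this conditioning step; you should state it explicitly rather than lean on the incorrect independence claim.
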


\begin{proof}
    We will want to prove a lower bound on the conditional probability of the event $\calE$ that $\tau(n^{a-1}) < \infty$ and among these first $\lceil n^a \rceil$ infected nodes, at least
    $n^{a - \delta}$ will have infectious periods of at least $x$, where we condition on the event $\calE_0$ of a large outbreak.

    It is helpful to consider an augmented process where a fresh new node will be infected as a new seed whenever the SIR process dies out (and this gets repeated). More precisely, whenever the number $I(t)$ of actively infectious vertices drops to zero, if $S(t) > 0$, we pick a vertex $v$ among the remaining susceptible ones uniformly at random, and infect it at time $t+1$. This way the process runs until the entire population have been infected once, and we can be sure to reach $n^a$ number of infections eventually. The true SIR process is simply the stopped process on the augmented process upon first extinction (i.e., number of active infections dropping to zero).

    In the context of the augmented process, denote by $\tilde \tau(\eps)$ the stopping time for an $\eps$-fraction infection, analogous to our $\tau(\eps)$ in the original process. Note that $\tilde\tau(n^{a - 1}) < \infty$ a.s. Denote by $\tilde \tau_0$ the first time infections drop to zero. Per our observation above, our original SIR process is simply the augmented process stopped at $\tilde\tau_0$. Further,  the coupling between the two process is exact until $\tilde\tau_0$; in particular, whenever $\tilde\tau(n^{a - 1}) < \tilde\tau_0$ $\tilde\tau(n^{a - 1}) = \tau(n^{a - 1})$ and those nodes infected by then share identical lives in these two version.
    
    It now suffices to show the following event $\tilde\calE$ happens whp in the augmented process: among the first $\lceil n^a \rceil$ infected nodes, at least $n^{a-\delta}$ will have infectious periods at least $x$. To derive our target statement in the lemma, the event $\tilde\calE$ along with $\tilde\tau(n^{a - 1}) < \tilde\tau_0$ implies that $\tau(n^{a-1}) = \tilde\tau(n^{a-1}) < \infty$ and, as these $\lceil n^a\rceil$ nodes have the same infection periods in the original SIR as in the augmented process, at least $n^{a-\delta}$ of them have infectious periods at least $x$ in the original SIR process. Thus, the probability of our original event is lower bounded by $\Pr(\tilde\tau(n^{a - 1}) < \tilde\tau_0, \tilde\calE) \ge \Pr(\tilde\tau(n^{a - 1}) < \tilde\tau_0) - \Pr(\tilde\calE^c)$, which has a limit $\pi$ once we show $\Pr(\tilde\calE^c) \to 0$. Since the probability of the event $\calE_0$ tends to $\pi$ as well, this would imply that $\Pr(\calE\mid\calE_0)=\Pr(\calE \cap \calE_0)/\Pr(\calE_0)=\Pr(\calE)/\Pr(\calE_0)\to 1$ as desired.

    It remains to understand the event $\tilde\calE$. By our definition of the infection process, each node has an $\Expo(\gamma)$ recovery clock independent from all other clocks; in other words, when a node is infected, no matter what past event we condition on (including other infection clocks we have seen, past extinction events, etc.), its recovery time is $\Expo(\gamma)$ distributed. Thus, the recovery times for the first $\lceil n^a \rceil$ infected nodes form a sequence of $\lceil n^a \rceil$ i.i.d. $\Expo(\gamma)$ samples\footnote{Another way to see this is to presample an infinite sequence of i.i.d. recovery time.  Each time a new vertex gets infected, it just chooses the next recovery time on the list, which means the sequence we see is an i.i.d. sequence.}. By Hoeffding's inequality, 
    \[
    \Pr(\tilde\calE^c) = \Pr_{N\sim\Bin(\lceil n^a \rceil, e^{-\gamma x})}(N < n^{a-\delta}) \le \exp\Big(-\frac{2 (\lceil n^a \rceil e^{-\gamma x} - n^{a-\delta})^2}{\lceil n^a \rceil}\Big),
    \]
    which tends to zero as $n\to \infty$ for any constant $x > 0$. This finishes our proof.
\end{proof}

\subsection{Branching Process Upper Bounds}\label{sec:BP-results-summary}
In addition to the properties of SIR on a single community random graph $G(n,p)$, we will rely on the following branching process upper bounds, to be established rigorously in Section~\ref{sec:BP-upper-bounds}.
These reflect the following intuition.  When the epidemic originally spreads,  all individual in $V_1\cup V_2$ are susceptible, and can be infected.  As the epidemic evolves, less and less individuals are susceptible, and hence can be infected.  One might thus hope that by replacing the number of susceptible individuals in each community by $n$, one gets upper bounds on the speed at which the epidemics spreads, suggesting that the following process gives upper bounds.

Starting with one infected individual in community $1$ (which w.h.p, will not be traveling, and hence can be assume to be in $V_1$), we explore its neighbors in our random, two community network as individuals get infected, giving $\Pois(c_A)$ neighbors of travel type $A\in\{H,T\}$ where $c_A$ is equal to $c$ tines the stationary probability $p_A$ of being in travel state $A$, independently for both travel types, and for the possible types $i\in\{1,2\}$.  Since these may or may not be infected later on, we think of them as possible, or not yet {\color{blue} born} children in the branching process.  With this in mind, we have the following continuous time branching process, with 
individuals have labels $(i,A)\in \{1,2\}\times \{H,T\}$:
    \begin{itemize}
        \item At birth, each individual has $\Pois(2c_A)$ possible children that start in the travel state $A$, independently for $A=H$ and $A=T$, all of them having type $1$ and $2$ i.i.d. with equal probability
        \item Both the mother and the possible children change their travel state according to the rates $\rho_T$ and $\rho_H$
        \item {Each potential child is linked to their mother by an edge.  This edge is considered active when mother and  child are in the same physical community, i.e., they either have the same community label and are in the same travel state, or they have different community labels and different travel states}
        \item {Potential children connected to their mother via active edges are born at rate $\beta$, starting their new life in the travel state they had just before been born}
        \item At rate $\gamma$, the mother dies, and with her all her not yet born children.
    \end{itemize}
{The active edges between mothers and their not yet born children in this continuous time branching process then correspond to what were defined as the active edges 
between infected and susceptible individuals in the main text. }

One might hope that the number of individuals of type $i$ born at or before time $t$ in this branching process is an upper bound on $I_i(t)+R_i(t)$.  As it turns out, this is true, provided we make the following adjustments:  
\begin{enumerate}[label=(\alph*)]
     \item we restrict ourselves to times $t$ which are not too large (say $t\leq T=\ln^2n$, a time we will later prove to be way past the time when the epidemic has died out anyhow).  
     \item  rather than choosing $c_A=c p_A$, we replace these by the upper bounds $c_H=c$ and $c_T=n^{-\alpha}\ln^3 n$ (adding an extra power of $\ln n$ for ``safety'').
    \item instead of requiring upper bounds which hold w.h.p., we ask for upper bounds which hold in expectation, i.e., we ask that $\E[I_i(t)+R_i(t)]$ is bounded by the {\it expected} number of individuals of type $i$ born at or before time $t$ in this branching process; 
 \end{enumerate}
Using this approach, we will then be able to prove the following theorem, which will be the main input for our proofs, in addition to the one community results from the last section.

\begin{proposition}\label{prop:BP-upper-bound}
If $t=t(n)\leq\ln^2n$, the following statements hold w.h.p.
    $$
    I_1(t)+R_1(t)= \tilde O\left(e^{\lambda t}\right)
  \quad\text{and}\quad
    I_2(t)+R_2(t)=\tilde O\left(n^{-\alpha}e^{\lambda t}\right).
    $$
\end{proposition}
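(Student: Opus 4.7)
The plan is to carry out exactly the branching-process construction sketched in Section~\ref{sec:BP-results-summary} and then extract exponential growth rates from the mean-offspring generator by perturbing around the no-travel case. Specifically, I would make the stochastic domination precise as follows. Pre-sample all the randomness: the Bernoulli$(c/n)$ edge indicators (for all unordered pairs, regardless of type), the $\Expo(\gamma)$ recovery clocks, the $\Expo(\beta)$ infection clocks on each oriented edge, and the independent travel Markov processes. A Chernoff bound combined with $p_T=\Theta(n^{-\alpha})$ shows that simultaneously for all $t\le T:=\ln^2 n$ the number of travelers in each community is at most $n^{1-\alpha}\ln^2 n$ with probability $1-o(1)$; call this event $\mathcal G$. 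On $\mathcal G$, the fraction of individuals of each type currently in the guest community is at most $n^{-\alpha}\ln^2 n$. Setting $c_H=c$ and $c_T=n^{-\alpha}\ln^3 n$ (with a logarithmic safety cushion), replacing the true local offspring of each infected mother by fresh i.i.d.\ $\Pois(2c_H)+\Pois(2c_T)$ potential children of the respective travel states, and never removing already-infected candidates, yields a branching process that stochastically dominates the number of infected-or-recovered descendants of each type on $\mathcal G$. Absorbing the $o(1)$-mass bad event into an additive $o(1)$, it suffices to upper bound $\E[N_i(t)]$ where $N_i(t)$ is the expected type-$i$ population in this dominating branching process.

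Next, I would analyze the mean dynamics. Let $\mathbf N(t)$ be the $16$-vector of expected numbers of living individuals refined by (parent type, parent state, child type, child state), so that $\dot{\mathbf N}=M\mathbf N$ with $M=M_0+\Delta$. The unperturbed $M_0$ is obtained by setting $c_T=0$, so the type-$1$ and type-$2$ blocks decouple; within each block the dynamics are birth at rate $c_H\beta=c\beta$, death at rate $\beta+\gamma$, plus conservative travel-switching at rates $\rho_H$ and $\rho_T$. Because travel-switching is conservative within a block, each block has Perron eigenvalue $\lambda=c\beta-\beta-\gamma$ with a strictly positive right eigenvector. The coupling $\Delta$ has operator norm $O(n^{-\alpha}\ln^3 n)$. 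Analytic perturbation theory for a simple isolated Perron eigenvalue gives a perturbed leading eigenvalue $\lambda_*=\lambda+O(n^{-\alpha}\ln^3 n)$, so that $e^{\lambda_* t}=e^{\lambda t}\cdot n^{o(1)}$ for $t\le\ln^2 n$. Projecting the initial condition (one type-$1$ home individual) onto the perturbed eigenbasis, the type-$1$ coordinates grow like $\Theta(e^{\lambda t})$, while the type-$2$ coordinates pick up an extra factor $O(n^{-\alpha}\ln^3 n)$, since $M_0$ has no arrows from type-$1$ into type-$2$ and so every type-$2$ individual carries at least one factor of $\Delta$ in its genealogy. This yields the claimed bounds on $\E[I_i(t)+R_i(t)]$.

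Finally, Markov's inequality applied at scale $\ln n$ upgrades each expectation bound to a w.h.p.\ bound, absorbing the extra $\ln n$ into the $\tilde O$ notation. The main obstacle, as flagged in the main text, is making the first step rigorous: on $G(t)$ travel and infection interleave, so a vertex that travels between being infected and transmitting may access a different set of active edges than in the branching description, and naive local domination is not automatic. The clean way around this is to represent the entire process through the oriented infection digraph $\mathcal D_G^{SIR}$ augmented by travel marks on vertices, so that each mother's potential transmissions are pre-drawn, (type, state)-indexed i.i.d.\ objects; one then verifies that on $\mathcal G$ the mother's realized active-at-time-of-transmission offspring count is pointwise dominated by the branching offspring count, and the domination propagates along the breadth-first exploration. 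Given this representation, the eigenvalue calculation and the Markov step are routine.
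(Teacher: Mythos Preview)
Your high-level strategy---dominate by a multi-type branching object, control its $16$-dimensional mean generator by perturbing around the no-travel matrix, then apply Markov---matches the paper's. But there are two places where you diverge, and in each the paper's route is cleaner and yours has a real obstacle.

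\textbf{Stochastic domination versus drift bounds.} You try to build a branching process that \emph{stochastically} dominates the SIR count on $\mathcal G$, and you correctly identify the obstruction: a mother who travels between her infection and a transmission sees a different set of active edges than the ones drawn at her birth, so local pointwise domination is not automatic. The paper simply does not attempt this. Instead it tracks the $16$-vector $X_{ij}^{AB}(t)$ of IS-edge counts, writes an \emph{upper bound} on its drift (dropping the clean-up step), and uses Dynkin's formula to get $\E[X(t)]\le e^{tM}\E[X(0)]+1/n$ after stopping at the time the traveler bound first fails. The remark after Proposition~\ref{prop:e^tM-upper-bound} explicitly says no stochastic-domination coupling was established. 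Since Markov's inequality only needs expectations, the drift route sidesteps the coupling entirely; your proposed digraph-with-travel-marks argument would have to be fleshed out substantially to close this.

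\textbf{Eigenvalue perturbation versus Dyson expansion.} You invoke ``analytic perturbation theory for a simple isolated Perron eigenvalue,'' but $\lambda$ is \emph{not} simple for $M_0$: it has algebraic and geometric multiplicity~$2$ (one copy per community block), and $M_0$ is not irreducible, so there is no strictly positive Perron vector. Degenerate-eigenvalue perturbation is workable here since $\lambda$ is semisimple, but you would need to argue carefully about how the two-dimensional eigenspace splits under $\Delta$ and why the type-$2$ components of the relevant perturbed eigenvector are $O(n^{-\alpha})$. The paper avoids this by proving a direct Dyson-series bound $\|e^{t(M_0+W)}-e^{tM_0}\|\le C(e^{C\|W\|t}-1)e^{\lambda t}$ (Lemma~\ref{lem:pert}), then observing that the initial vector $\mathbf e$ decomposes as $c\mathbf e_H+c_T\mathbf e_T$ with $\mathbf e_H$ an exact $\lambda$-eigenvector of $M_0$ supported entirely on type-$1$ coordinates. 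Hence $e^{tM_0}\mathbf e_H$ has zero type-$2$ component, and both the $c_T\mathbf e_T$ piece and the Dyson error contribute $\tilde O(n^{-\alpha}e^{\lambda t})$. Also, your claim that setting $c_T=0$ decouples the type blocks is not quite right: $M_0$ still has cross-type entries (e.g.\ a traveling type-$1$ mother can infect a home type-$2$ child), but the lower-triangular structure of $M_0$ and the specific support of the initial condition are what make the type-$2$ contribution vanish at zeroth order.
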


\begin{proof}
    We defer the proof to Section~\ref{sec:BP-upper-bounds}.
\end{proof}
Finally, we will use the following proposition, which we will prove using the technology developed in \cite{borgs2024lawlargenumberssir}, using the intuition that past herd immunity, the epidemic should die out fast.  The proof will also involve a branching process upper bound, to be established in Subsection~\ref{sec:Herd-Immunity} of Section~\ref{sec:BP-upper-bounds}.

\begin{proposition}\label{prop:herd-immunity} Assuming $R_0>1$, define a herd immunity threshold $s_0$ by $s_0R_0=1$, and let $0<\delta<s_0$.
Choose $t_0=t_0(n)<\frac 12\ln^2 n$  such that the probability of the event
$$
\calE_0=\big\{\max\{S_1(t_0),S_2(t_0)\}\leq (s_0-\delta)n\big\}
$$
is bounded from below uniformly in $n$.
Then there exists a constant $C$ (depending on $\delta$ and the parameters of the model) such that conditioned on  $\calE_0$, with high probability
the infection will die out by time $t_0+C\ln n$, i.e., 
$$
\Pr\big(I(t_0+C\ln n)=0\;\big|\;\calE_0\big)\to 1 \quad\text{as}\quad n\to\infty.
$$
\end{proposition}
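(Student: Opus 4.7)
The plan is to dominate the infection dynamics after time $t_0$ by a subcritical multi-type branching process, show its expected size conditional on $\calE_0$ is $o(1)$ at time $t_0+C\ln n$, and invoke Markov's inequality. The key input is that since $t\mapsto S_k(t)$ is non-increasing, on $\calE_0$ we have $S_k(t)\le(s_0-\delta)n$ for every $t\ge t_0$ and every $k\in\{1,2\}$; thus the fraction of susceptibles in either community never exceeds $s_0-\delta<s_0=1/R_0$ after $t_0$.

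First I would adapt the edge-exploration coupling from Section~\ref{sec:BP-upper-bounds} to restart the SIR dynamics at the stopping time $t_0$, treating the infected and recovered vertices at $t_0$ as already exposed and exploring only edges to as-yet-susceptible vertices. By the strong Markov property of the joint SIR + travel dynamics and the memorylessness of the exponential recovery, infection, and travel clocks, only the configuration at $t_0$ is carried forward. Combined with the binomial coupling to susceptible neighbors and the deterministic bound $S_k(t)\le(s_0-\delta)n$ on $\calE_0$, each newly infected individual currently in community $k$ has at most $\Bin((s_0-\delta)n,c/n)\preceq_{\mathrm{st}}\Pois(c(s_0-\delta))$ active half-edges to susceptibles in its current community, each firing at rate $\beta$ until the parent recovers at rate $\gamma$. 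This produces the same multi-type branching-process upper bound as in Proposition~\ref{prop:BP-upper-bound}, but with the Poisson offspring parameter $c$ replaced by $c(s_0-\delta)$ throughout.

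The Malthusian parameter of the dominating process is computed analogously to the derivation of $\lambda$ in \eqref{eqn_lambda_def}. In the travel-free limit $\rho_\Travel=0$, each individual has mean offspring $R_0(s_0-\delta)=1-R_0\delta<1$ with inter-birth density $(\beta+\gamma)e^{-(\beta+\gamma)s}$, giving
\[
\nu_0\;=\;(\beta+\gamma)\bigl(R_0(s_0-\delta)-1\bigr)\;=\;-(\beta+\gamma)R_0\delta\;<\;0.
\]
The same perturbation argument as in Proposition~\ref{prop:BP-upper-bound} shows that when $\rho_\Travel=\Theta(n^{-\alpha})$, the Malthusian parameter $\nu(n)$ of the full multi-type process converges to $\nu_0$, hence $\nu(n)<\nu_0/2<0$ for all large $n$, and there is a finite constant $C'$ with
\[
\E\bigl[\,I_1(t_0+s)+I_2(t_0+s)\,\big|\,\mathcal{F}_{t_0}\,\bigr]\;\le\;C'\bigl(I_1(t_0)+I_2(t_0)\bigr)\,e^{\nu(n)s}\qquad\text{on }\calE_0.
\]
Using $I_1(t_0)+I_2(t_0)\le 2n$ deterministically, setting $s=C\ln n$ with $C>2/|\nu_0|$ yields $\E[I_1(t_0+C\ln n)+I_2(t_0+C\ln n)\mid\calE_0]\le 2C'n\,e^{\nu(n)C\ln n}=o(1)$, and Markov's inequality then gives the desired conclusion $\Pr(I(t_0+C\ln n)\ge 1\mid\calE_0)\to 0$.

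The main obstacle is justifying the branching-process upper bound when the exploration is restarted at the random stopping time $t_0$ from the (random) configuration guaranteed by $\calE_0$, and verifying that the perturbation analysis of the generator in Section~\ref{sec:BP-upper-bounds} goes through after replacing $c$ by $c(s_0-\delta)$ in every entry. Both points reduce to the strong Markov property of the joint dynamics, the monotonicity of the binomial coupling in $S_k(t)$, and the fact that the travel-free dominating matrix is already strictly subcritical, with spectral gap $|\nu_0|>0$, so that the perturbation in $\rho_\Travel=\Theta(n^{-\alpha})$ does not destroy subcriticality.
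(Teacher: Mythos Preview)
Your strategy---dominate the post-$t_0$ dynamics by a subcritical branching process with offspring parameter $c(s_0-\delta)$, apply the perturbation analysis in $\rho_\Travel$, and conclude via Markov---is exactly the paper's route, and your Malthusian parameter $\nu_0=-(\beta+\gamma)R_0\delta=-c\beta\delta$ matches the paper's dominant eigenvalue $\tilde\lambda$.

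There is, however, a genuine gap in how you handle the initial condition. Your key estimate
\[
\E\bigl[\,I_1(t_0+s)+I_2(t_0+s)\,\big|\,\mathcal{F}_{t_0}\,\bigr]\;\le\;C'\bigl(I_1(t_0)+I_2(t_0)\bigr)\,e^{\nu(n)s}
\]
is not justified as written. At time $t_0$, the $IS$-edges emanating from currently infected vertices have \emph{already been revealed} in the exploration coupling; you cannot ``restart'' by redrawing $\Bin((s_0-\delta)n,c/n)$ fresh half-edges for each of them. The reproductive potential of the time-$t_0$ population is encoded in $X(t_0)$ (the $IS$-edge count), not in $I(t_0)$, and there is no a priori bound of the form $X(t_0)\le C''I(t_0)$. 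The strong Markov property and monotonicity in $S_k$ that you invoke only control edges drawn for \emph{newly} infected vertices after $t_0$; they do nothing for the existing $IS$-edges.

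The paper fixes this in two steps. First, it tracks the vector $\tilde X$ of $IS$-edge counts directly: its drift admits the linear upper bound $\drift\tilde X\le\tilde M\tilde X$ with $\tilde M$ built from $\tilde c_H=c(s_0-\delta)$, so the matrix exponential bound applies with the crude initial condition $\|\tilde X(t_0)\|_1\le n^2$. Choosing $C$ large enough then gives $\Pr(\tilde X(t_0+C\ln n)\ne 0\mid\calE_0)\to 0$. Second, once there are no $IS$-edges, no further infections can occur, and a union bound over $2n$ exponential-$\gamma$ recovery clocks shows that all remaining infected vertices recover within a further $\tilde C\ln n$ time. You should replace your bound on $I$ by this two-step argument on $X$ then $I$. (As a minor aside, with your stated constants $\nu(n)<\nu_0/2$ and $C>2/|\nu_0|$ you only obtain $2C'n\cdot n^{\nu(n)C}=O(1)$ rather than $o(1)$; a slightly larger $C$ is needed.)
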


\begin{proof}
    We defer the proof to Section~\ref{sec:BP-upper-bounds}.
\end{proof}

\section{Travelers vs Non-Travelers}

On an intuitive level, all our results should follow from the fact that we can divide the set of individuals into a large majority of non-travelers, with the epidemic growth mainly driven by the growth in the two resulting sub-communities, and a smaller population which travels and transmits the epidemic between communities.  It turns out that most of our proofs follow this intuition, except for some of our upper bounds, which require a more detailed understanding of the inter-community interactions and will be carried out with the help of suitable multi-type branching process approximations, see the remarks in the last section.

To translate this intuition into a rigorous proof, we consider the set of non-travelers up to time $T$ at which point we expect the epidemic to have died with high probability.  Specifically, we set 
 $T := \ln^2 n$ and then define the random sets
\[
   \mathcal{T}_i = \{v \in V_i \mid v \text{ travels at least once during } [0,T]\},
\]
and
\[V_i^{\static}=V_i\setminus {\calT}_i,\quad
V^{\static}=V_1^{\static}\cup V_2^{\static}
\quad\text{and}\quad \calT=\calT_1\cup\calT_2,\]
as well as the random variables
$I_i^{\static}(t)$ and $R_i^{\static}(t)$,
denoting the number of infected and recovered individuals in $V_i^{\static}$, respectively, using
\begin{equation}\label{Yi-definition}
 Y_i^\static(t)=I_i^{\static}(t) + R_i^{\static}(t)   
\end{equation}
for their sum. We refer to individuals in  $\calT$ as travelers, and those in $V^\static$ as static.

The following lemma says that whp, the set of travelers contains only a small proportion of all individuals. In addition it  gives lower bounds on the probability of two particular events, $\calE_{HT}(v,t)$ and  $\calE_{TH}(v,t)$, where  $\calE_{HT}(v,t)$ is the event that an individual $v$ is home at time $t$, stays home for at least one and at most two time units, and then travels for at least one time unit, with  $\calE_{TH}(v,t)$ defined by interchanging the roles of traveling and staying home. To motivate the definition of these events, we note that for a traveler to transport the infection from one community to another, it must either be  home long enough after one of its contacts in its resident community to gets infected, then travel soon enough to still be infectious, and finally travel long enough to infect someone in the host community, or it must travel long enough to get infected, and return home soon enough (and for enough time) to infect someone in their home community.

\begin{lemma}\label{lem:few-travelers}
With high probability, 
\begin{equation}
    \label{eqn_travelers_bound}|
    {\calT_i}|
    =np_T(1+\rho_H\ln^2 n)]\left(1+\tilde O(n^{-\tilde\alpha})\right)
    \leq \frac 12n^{1-\alpha}\ln^3 n
    \end{equation}
    where $\tilde\alpha=\min\big\{\alpha,\frac 12(1-\alpha)\big\}$.  
Furthermore, for all $t\in [0,T-3]$, the  conditional probabilities of the two events  $\calE_{HT}(v,t)$ and  $\calE_{TH}(v,t)$ are
\begin{align}
    \Pr(\calE_{HT}(v,t)\mid v\in\calT_i)
    &=\frac {n p_He^{-\rho_T}(1-e^{-\rho_T})e^{-\rho_H}}{\E[|\calT_i|]}=\Theta\left(\frac 1{\ln^2n}\right)
\\
    \Pr(\calE_{TH}(v,t)\mid v\in\calT_i)
    &=\frac {np_Te^{-\rho_H}(1-e^{-\rho_H})e^{-\rho_T}}{\E[|\calT_i|]}=\Theta\left(\frac 1{\ln^2n}\right)
\end{align}
 \end{lemma}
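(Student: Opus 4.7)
The plan is to prove the three assertions by a direct calculation of the underlying Markov chain probabilities, combined with standard concentration for sums of i.i.d.\ Bernoullis. The key observation is that the travel chains $q_v(\cdot)$ are mutually independent across $v$ (and independent of everything else in the model), so $|\calT_i|$ is a binomial random variable with parameters $n$ and $p^*:=\Pr(v\in\calT_i)$.

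For the first assertion, I would compute $p^*$ by conditioning on the starting state $q_v(0)$, which by our standing convention is drawn from the stationary distribution. If $v$ starts in the travel state (probability $p_T$), it is automatically in $\calT_i$; if $v$ starts at home (probability $p_H$), it joins $\calT_i$ iff the first home-to-travel transition occurs by time $T$, an event of probability $1-e^{-\rho_T T}$ by memorylessness. Thus $p^*=p_T+p_H(1-e^{-\rho_T T})$, and a first-order Taylor expansion together with the identity $p_T\rho_H=p_H\rho_T$ yields $p^*=p_T(1+\rho_H T)(1+\tilde O(n^{-\alpha}))$. A multiplicative Chernoff bound applied to $|\calT_i|$, whose mean $np^*$ is of order $\Theta(n^{1-\alpha}\ln^2 n)$, then adds a further relative deviation of order $\tilde O(n^{-(1-\alpha)/2})$; combining the two errors gives the claimed $\tilde O(n^{-\tilde\alpha})$ bound with $\tilde\alpha=\min\{\alpha,(1-\alpha)/2\}$. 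The coarser upper bound $|\calT_i|\leq \tfrac12 n^{1-\alpha}\ln^3 n$ is then immediate for large $n$, since the main term grows only like $\ln^2 n$ and not $\ln^3 n$.

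For the two conditional probabilities, I would use the strong Markov property and stationarity: at any fixed time $t$, the state $q_v(t)$ has the stationary distribution $(p_H,p_T)$ independently of the pre-$t$ history, and the future trajectory is driven by fresh exponential clocks. The event $\calE_{HT}(v,t)$ then decomposes as the intersection of four conditionally independent pieces---home at time $t$ (probability $p_H$), no transition during $[t,t+1]$ (probability $e^{-\rho_T}$), at least one home-to-travel transition during $(t+1,t+2]$ (probability $1-e^{-\rho_T}$), and no travel-to-home transition during the unit time following that new transition (probability $e^{-\rho_H}$)---giving $\Pr(\calE_{HT}(v,t))=p_He^{-\rho_T}(1-e^{-\rho_T})e^{-\rho_H}$. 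Interchanging the roles of home and travel in this decomposition gives $\Pr(\calE_{TH}(v,t))=p_Te^{-\rho_H}(1-e^{-\rho_H})e^{-\rho_T}$. Since both events imply $v\in\calT_i$, Bayes' rule (together with $\Pr(v\in\calT_i)=\E[|\calT_i|]/n$) converts these into the claimed conditional probabilities. The $\Theta(1/\ln^2 n)$ asymptotic follows since the numerators $n\Pr(\calE_{\cdot})$ are of order $\Theta(n^{1-\alpha})$---using $1-e^{-\rho_T}\sim\rho_T$ and $p_T\sim\rho_T/\rho_H$, together with the $\Theta(1)$ factors $p_H$, $e^{-\rho_T}$, $e^{-\rho_H}$, $1-e^{-\rho_H}$---while $\E[|\calT_i|]=\Theta(n^{1-\alpha}\ln^2 n)$.

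The main obstacle is bookkeeping rather than anything deep: one must be careful to match the Taylor-expansion error in $p^*$ with the Chernoff fluctuation so that the combined relative error attains the stated exponent $\tilde\alpha=\min\{\alpha,(1-\alpha)/2\}$. One also needs to verify that $\calE_{TH}(v,t)\subseteq\{v\in\calT_i\}$---but since being in the travel state at $t$ after starting in stationarity means either that $v$ was already traveling at time $0$ or that $v$ made a home-to-travel transition during $[0,t]$, this inclusion is immediate from the definition of $\calT_i$.
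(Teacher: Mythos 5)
Your proposal is correct and follows essentially the same route as the paper: compute $p^*=\Pr(v\in\calT_i)=p_T+p_H(1-e^{-\rho_T T})$, Taylor-expand using $p_H\rho_T=p_T\rho_H$, apply a multiplicative Chernoff bound to the binomial $|\calT_i|$, and obtain the conditional probabilities by computing the unconditional ones via stationarity and the Markov property and dividing by $p^*=\E[|\calT_i|]/n$. Your decomposition of $\calE_{HT}$ into ``no transition on $[t,t+1]$, at least one on $(t+1,t+2]$'' is just the evaluated form of the paper's integral $\int_1^2 e^{-\rho_T s}\rho_T\,ds$, and your accounting of where $\tilde\alpha=\min\{\alpha,\tfrac12(1-\alpha)\}$ comes from is, if anything, slightly more explicit than the paper's.
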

\begin{proof}
Each individual in $V_1\cup V_2$ has probability
    \begin{equation}
p_{\geq 1}=p_T+(1-p_T)(1-e^{-\rho_T \ln^2 n})=p_T(1+ \rho_H\ln^2 n)(1+\tilde O(n^{-\alpha}))
\end{equation}   
of traveling at least once in the time interval $[0,T]$, leading to 
$$
\E[|\calT_i|]= np_T(1+\rho_H\ln^2n)(1+\tilde O(n^{-\alpha}))
$$
Using that $\E[|\calT_i|]= \Theta(n^{1-\alpha}\ln^2n)$
\eqref{eqn_travelers_bound} follows using a  multiplicative Chernoff  bound.

To prove the second, we note that the unconditional probability of being home at time $t$ is $p_T$, the probability of then traveling for the first time between $t+1$ and $t+2$ is equal to $\int_1^2e^{-\rho_Ts}\rho_T \, ds=e^{-\rho_T}(1-e^{-\rho_T})$, and the probability of not returning home for at least one time unit is $e^{-\rho_H}$, showing that
\[  
  \Pr(\calE_{HT}(v,t))= p_He^{-\rho_T}(1-e^{-\rho_T})e^{-\rho_H}.
\]
Since the event $\calE_{HT}(v,t))$ implies that $v$ is traveling at least once between $0$ and $T$, the conditional probability is obtained by dividing this expression by 
$p_{\geq 1}=\E[|\calT_i|]/n$.  The  expression for the conditional probability of the event  $\calE_{TH}(v,t))$ is derived in the same way.  Finally, the claimed order of magnitude for both conditional expectations follows by the expression $p_{\geq 1}$ above and the observation that
$p_H=\Theta( \rho_H)=\Theta(1)$ and $p_T=\Theta(\rho_T)=\Theta (1-e^{-\rho_T})$
\end{proof}

\subsection{Exact coupling for the early stages of the epidemic}

Our first result concerns the early stages of the epidemic, which, as it turns out, is with high probability driven by the spread of the epidemic in 
the community of non-travelers of type 1.  This is the content of the next proposition.

\begin{proposition}\label{pro:small-time-coupling}
Let $N_n$ be such that $N_n(n^{-1/2}+n^{-\alpha}\ln^3 n)\to 0$ as $n\to\infty$.
 Then the two community model with travel can be coupled to $\BPsub{c}(t)$ such that
 w.h.p.
 \begin{equation}\label{eq:size_of_tree_coupling}
     I(t)+R(t)=I_1^{\static}(t)+R_1^{\static}(t)=|\BPsub{c}(t)| \quad\text{for all $t$ such that} \quad|\BPsub{c}(t)| \leq N_n.
 \end{equation}

   As a consequence, the probability of a large outbreak in community $1$ converges to $\pi$, and conditioned on not having a large outbreak, the following statements are true
    \begin{enumerate}
        \item the time of an outbreak, as well as its total size $R_1(\infty)+R_2(\infty)$ are bounded in probability
        \item whp., $R_2(\infty)=0$ and all vertices contributing to $R_1(\infty)$ recover before traveling
    \end{enumerate}
\end{proposition}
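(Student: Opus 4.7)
The plan is to couple the two-community dynamics, in its early stages, to a single-community SIR on the static subcommunity $V_1^\static$, and invoke Theorem~\ref{thm:single-comm}\ref{bullet:single_comm_init_growth} to couple that in turn to $\BPsub{c}(t)$. I work in the preflipping convention, where every potential edge on $V_1\cup V_2$ is drawn i.i.d.~$\Bern(c/n)$ independently of the travel process and is active only when both endpoints lie in the same community. By Lemma~\ref{lem:few-travelers}, w.h.p.\ $|V_1^\static| = n(1 - \tilde O(n^{-\alpha}))$, and the initial seed $v_0$, chosen uniformly in $V_1^\Home$, lies in $V_1^\static$ with conditional probability $1 - O(\rho_\Travel T) = 1 - \tilde O(n^{-\alpha})$. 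Restricted to $V_1^\static$, the graph is exactly $G(|V_1^\static|, c/n)$ with every edge permanently active, so the restricted SIR is a standard single-community SIR; Theorem~\ref{thm:single-comm}\ref{bullet:single_comm_init_growth}, applied to $G(|V_1^\static|, c/n)$ and combined with the continuity of the branching-process coupling in the offspring mean (which differs from $c$ by only a $1-o(1)$ factor), then couples this restricted SIR to $\BPsub{c}$ so that the two agree up to size $N_n = o(\sqrt n)$ w.h.p.

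The main obstacle is to show that, with high probability, no infection escapes $V_1^\static$ before the epidemic reaches size $N_n$. An escape event is an active-edge infection from some $v \in V_1^\static$ to a vertex $u \in \calT = \calT_1 \cup \calT_2$. By Lemma~\ref{lem:few-travelers}, the number of potential neighbors of $v$ in $\calT$ is stochastically dominated by $\Bin(|\calT|, c/n)$ with mean $\tilde O(n^{-\alpha})$, and each such edge delivers a successful infection within $v$'s infectious period with probability at most $\beta/(\beta+\gamma) < 1$. Hence the expected number of escape infections from any one infected vertex is $\tilde O(n^{-\alpha})$, and a union bound over the first $N_n$ infections bounds the total escape probability by $O(N_n \cdot n^{-\alpha} \ln^3 n) = o(1)$ by the hypothesis on $N_n$. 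On the complementary event, the full two-community SIR coincides with the restricted SIR, and hence with $\BPsub{c}$, establishing~\eqref{eq:size_of_tree_coupling}.

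For the consequences, apply the coupling with any $N_n$ satisfying $\ln n \ll N_n \ll \sqrt n$. Since $\BPsub{c}$ either survives indefinitely with probability $\pi$ or goes extinct with total progeny bounded in probability, the coupling immediately gives $\Pr(R_1(\infty) + R_2(\infty) \geq \ln n) \to \pi$. Moreover, on the extinction event, w.h.p.\ the branching process has size at most $N_n$ and terminates in a bounded-in-probability time, so the coupling yields $R_1(\infty) + R_2(\infty) = |\BPsub{c}(\infty)|$ bounded in probability and places every infected vertex in $V_1^\static$. In particular, $R_2(\infty) = 0$, and every contributor to $R_1(\infty)$, being in $V_1^\static$, never travels during $[0, T]$---hence they all recover before ever traveling, as claimed.
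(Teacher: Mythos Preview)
Your proof is correct and follows essentially the same strategy as the paper: both argue that w.h.p.\ the first $N_n$ infections stay inside the non-traveler set (a union bound over $N_n$ vertices each with $\tilde O(n^{-\alpha})$ chance of connecting to $\calT$), and that on this set the process is a single-community SIR that couples to $\BPsub{c}$ up to size $o(\sqrt n)$. The only packaging difference is that the paper redoes the $\Bin\to\Pois(c)$ coupling inline via an explicit exploration/stopping-time, whereas you invoke Theorem~\ref{thm:single-comm}\ref{bullet:single_comm_init_growth} on $G(|V_1^\static|,c/n)$ and then appeal to continuity in the mean to pass from $\BPsub{c'}$ (with $c'=c|V_1^\static|/n$) to $\BPsub{c}$; that last step deserves one explicit line (e.g., $d_{\mathrm{TV}}(\Pois(c),\Pois(c'))=O(|c-c'|)=\tilde O(n^{-\alpha})$, so the two branching processes agree for $N_n$ births with probability $1-O(N_n n^{-\alpha}\ln^3 n)=1-o(1)$), but is otherwise routine.
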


\begin{proof}[Proof of Proposition~\ref{pro:small-time-coupling}]
To prove the proposition, we couple the exploration of the two community network to the SIR dynamics as follows.  First, we draw all the travel clocks, determining in particular the set ${\calT}_1\cup {\calT}_2$ of travelers up to time $T=\ln^2 n$.  Next, 
starting from a single infected individual $v_1$ in $V_1^H\cup V_2^T$ (which whp will be in $V_1^{\static}$), whenever a new vertex $v_j$ gets infected (where we order the vertices according to their infection time), we draw its recovery time and explore  the edges from $v_j$ into the set of not yet infected vertices, noting that
whenever we infect a new vertex $v_j$, the ``forward'' degrees of $v_j$  into the not yet infected vertices in $V_1$ and $V_2$ have distributions $\Bin(n_1,c/n)$ and $\Bin(n_2,c/n)$ where $n_1$ and $n_2$ are the number of not yet infected vertices in $V_1$ and $V_2$, respectively.  As a consequence, these forward degrees can be coupled to two independent $\Pois(c)$ random variables in such a way that the Binomial and Poisson versions are equal with probability
$1-O(j/n)$. Finally, we start independent infection clocks with rate $\beta$ on the newly discovered edges, pausing them whenever the two endpoints are not in the same travel state (and stopping them when $v_j$ recovers).  

Defining the stopping time $\tau$ as the first time one of the following events happens:
\begin{enumerate}
    \item the number of ``children'' of $v_j$ in the exploration process is different from $\Pois(c)$ in either community
    \item $v_j\in{\calT}_1\cup\calT_2$.
\end{enumerate}
We see that up to the stopping time $\tau$, the infection tree is equal to $\BPsub{c}(t)$.
Furthermore, for $k=o(\sqrt n)$, w.h.p. none of the first type of events happens before the process reaches size $k$, and if $k=o(n^{\alpha}\ln^{-3} n)$, w.h.p., none of the second type happens before the process reaches size $k$. This proves statement \ref{eq:size_of_tree_coupling}.  The next statement (on the probability of a large outbreak) follows by setting $N_n=\ln n$ and observing that $\Pr(|\BPsub{c}(t)|\geq \ln n)\to \pi$. Conditioned on not having a large outbreak, statement 1 follows from observing that conditioned on $\BPsub{c}(t)$ not surviving, it lives for a time that is bounded in probability, and reaches a final size that is bounded in probability.
Since this implies that with high probability the branching process dies before time $T$,
we see that w.h.p.,, all vertices which ever get infected (which w.h.p. are in $V_1^{\static}$) recover before ever traveling, proving statement 2.
\end{proof}

\subsection{Early and late stage upper bounds in the two communities}\label{sec:upper-bd-Rinfty}

Next we derive an upper bound on the size of the epidemic up to time $T$, noting that the branching process upper bounds from Proposition~\ref{prop:BP-upper-bound} are too crude to be useful at $T=\ln^2n$.  We  will also strengthen the upper bounds for small $t$, showing that for $t\leq \frac{\alpha-\delta}\lambda \ln n$, w.h.p. the infection never leaves the set of non-travelers $V_1^\static$, while Proposition~\ref{prop:BP-upper-bound} only shows that it never leaves the set $V_1$.

To do so, we now couple the epidemic on the full graph to the epidemics on the communities of non-travelers as follows.  
First, we draw all travel clocks, determining in particular the sets $V_1^\static$ and $V_2^\static$.
Next we draw the two community network, noting that the induced graph on, say, 
$V_1^\static$ is nothing but a copy of $G_1'\sim G(n',c'/n')$, where $n'=|V_1^\static|$ and $c'$ is defined by $c'/n'=c/n$, and similarly for $V_2^\static$ (with  $n''=|V_2^\static|$).
Finally, for each individual $v$, we draw a recovery clock $T_v$ and for 
each directed edge $(u,v)$ in the 2-community network we draw
an infection clock $T_{u,v}\sim \Expo(\gamma)$.

Note that the travel clocks introduce some subtleties here: when we pre-draw a time $T_{u, v}$ with either $u$ or $v$ in the set of travelers ${\calT}_1\cup\calT_2$, we have to take into account that the original clock was only active when both endpoints were physically in the same community. In order to determine if an infection is passed from $u$ to $v$ and when, we therefore have to look at the whole travel history of the two vertices and the time $t_u$ when $u$ was initially infected.  Specifically, we take $T_{u, v}$ as the time-budget that we use whenever the edge $uv$ is active (starting from the time $t_u$). Let $t_{u, v}$ be the time at which the time budget is exhausted (this, by definition, will be a time when $uv$ is active). If $u$ has not recovered by time $t_{u, v}$, i.e., if $t_{u, v}\leq t_u+T_u$, the edge transmits at time $t_{u, v}$, otherwise it doesn't.  The above process has to be done iteratively, starting by setting $t_u=0$ for the first infected individual and $t_v=\infty$ for all others, updating the infection times as we carry out the above steps for the infected vertices, one by one as they get infected.

Considering the epidemic as experienced by individuals in $V_1^\static$, once the infection has reached $V_1^\static$, the epidemic will spread inside $G_1'$ via edges into the set of currently susceptible individuals in $V_1^\static$, while injecting seeds into the external directed network along the edges from $V_1^\static$ to ${\calT}_1\cup\calT_2$ (note that there will be no active edges between the non-travelers in $V_1^\static$ and $V_2^\static$ until time $T$). The epidemic inside  $G_2'$ proceeds by a similar process.

\begin{proposition}\label{prop:Rinfty-upper-bd}
    Fix $\delta>0$,  let  $t_-= \frac{\alpha-\delta}\lambda\ln n$, and as before let $T=\ln^2n$.  Then w.h.p., the following statements hold: 
    \begin{enumerate}
        \item at time $t_-$, $I(t_-)+R(t_-)$ consists only of vertices in $V_1^\static$.
        \item $R_1(T)+I_1(T)\leq (r_\infty+\delta)n$, and
     $R_2(T)+I_2(T)\leq (r_\infty+\delta)n$.
    \end{enumerate} 
  \end{proposition}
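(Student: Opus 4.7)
I would combine the branching-process bounds of Proposition~\ref{prop:BP-upper-bound} with a direct escape-rate computation. At time $t_-$ the proposition gives $I_2(t_-)+R_2(t_-)=\tilde O(n^{-\delta})$ w.h.p., which being integer-valued must equal $0$ w.h.p., ruling out any type-$2$ infections by $t_-$. The same proposition (plus the monotonicity of $I+R$) gives $I_1(s)+R_1(s)\leq M := n^{\alpha-\delta/2}$ for all $s\leq t_-$ w.h.p. Conditional on this event and on $v_0\in V_1^\static$---which holds with probability $1-p_{\geq 1}\to 1$---the instantaneous rate at which the infection first leaves $V_1^\static$ is at most $\beta M\cdot c|\calT|/n=\tilde O(n^{-\delta/2})$, since each infected vertex in $V_1^\static$ has in expectation $\tilde O(n^{-\alpha})$ edges into $\calT$ (by Lemma~\ref{lem:few-travelers}) and no edges to $V_2^\static$ are ever active. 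Integrating over $[0,t_-]$ with $t_-=O(\ln n)$ gives an escape probability of at most $\tilde O(n^{-\delta/2})\to 0$.

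\paragraph{Part (2).} I would apply Lemma~\ref{lem:singlecomm-multipleseeds}(1) separately to the induced subgraphs $G_i':=G[V_i^\static]$ for $i=1,2$, using a sequential exposure. First reveal the travel clocks, which determine $\calT=\calT_1\cup\calT_2$ and $V_i^\static$; by Lemma~\ref{lem:few-travelers}, $|\calT_i|=\tilde O(n^{1-\alpha})$ w.h.p. Next reveal the edges between $V_i^\static$ and $\calT$, and set
\[
\mathcal{S}_i := \{v\in V_i^\static : v\sim u\text{ for some }u\in\calT\}.
\]
A Chernoff bound gives $|\mathcal{S}_i|=\tilde O(n^{1-\alpha})=o(n)$ w.h.p., and crucially $\mathcal{S}_i$ is independent of the internal graph $G_i'\sim G(n_i',c/n)$ with $n_i'=n(1-o(1))$, whose edges remain unexposed. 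Using the infection-digraph representation, every vertex in $R_i(\infty)\cap V_i^\static$ is forward-reachable in $\mathcal{D}_{G_i'}^{SIR}$ from the set $\{v_0\}\cup\mathcal{S}_i$: any infection path from $v_0$ ending in $V_i^\static$, after its last excursion through traveler vertices, must re-enter $V_i^\static$ at a vertex of $\mathcal{S}_i$, and the remaining internal subpath is a valid path in $\mathcal{D}_{G_i'}^{SIR}$. Lemma~\ref{lem:singlecomm-multipleseeds}(1) then yields $|R_i(\infty)\cap V_i^\static|\leq (r_\infty+\delta/2)n_i'$ w.h.p.; combining with $R_i(T)+I_i(T)\leq R_i(\infty)\leq |R_i(\infty)\cap V_i^\static|+|\calT_i|$ gives the bound $(r_\infty+\delta)n$.

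\paragraph{Main obstacle.} The principal technical difficulty is arranging the independence required by Lemma~\ref{lem:singlecomm-multipleseeds}(1): the seed set fed into the lemma must be independent of the random graph governing the internal dynamics, whereas the actual external seeds appear dynamically via a process entangled with the graph. The resolution is to pass to a static envelope $\{v_0\}\cup\mathcal{S}_i$ of potential seeds---specified entirely by the travel clocks and the cross-edges to $\calT$---and then to use the infection-digraph representation to argue that controlling the internal forward-reach from this envelope suffices, even though the actual spread may weave through travelers in complicated ways. A secondary, easier point is that replacing $G(n,c/n)$ by $G(n_i',c/n)$ with $n_i'=n(1-o(1))$ gives only an $o(1)$ perturbation of the effective parameter and hence of $r_\infty$ (by continuity, Theorem~\ref{thm:single-comm}(a)), which is absorbed into $\delta$.
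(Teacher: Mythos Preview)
Your proposal is correct and follows essentially the same route as the paper: Part~2 via Lemma~\ref{lem:singlecomm-multipleseeds}\ref{multi-seed-upperbd} applied to $G_i'=G[V_i^\static]$ with seed set equal to the $V_i^\static$-neighbors of $\calT$ (checked to be $o(n)$ and independent of the internal edges), and Part~1 via a growth bound $I_1+R_1\le n^{\alpha-\delta+o(1)}$ on $[0,t_-]$ combined with the observation that the expected number of edges from this set into $\calT$ is $\tilde O(n^{-\delta/2})$. The only cosmetic differences are that the paper obtains the growth bound through a stopping-time coupling to single-community SIR on $G(n,c/n)$ rather than by invoking Proposition~\ref{prop:BP-upper-bound}, and phrases the escape estimate directly as an edge count rather than as a rate integrated over time (your ``instantaneous rate $\beta M\cdot c|\calT|/n$'' is really the \emph{expected} rate, but the intended bound is the same).
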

\begin{proof}
To prove the first statement, define a stopping time $\tau$ to be the first time an individual outside of $V_1^\static$ gets infected.  Up to the stopping time, the epidemic in 
 $V_1^\static$ grows at most as fast as the SIR epidemic on $G(n,c/n)$, showing that if we consider the time $\tilde t_-=\min\{\tau,t_-\}$ and an arbitrary
 $\delta'>0$, then 
 w.h.p. $I_1^\static(\tilde t_-)+R_1^\static(\tilde t_-)\leq e^{\lambda (t_-+\delta')}$.
 But each individual contribution to $I_1^\static+R_1^\static$
has only 
$\Bin(|\calT|,c/n)$ neighbors outside of  $V_1^\static$ it can infect, showing that up to the time
$\tilde t_-$, the probability of an infection leaving  $V_1^\static$ is bounded by
$\tilde O(n^{-\alpha} e^{\lambda (t_-+\delta')})$ which goes to zero if $\delta'$ is chosen small enough.
This completes the proof of the first statement.

To prove the second statement, we note that 
with probability tending to $1$,  $I_i(T) + R_i(T)\leq Y_i^\static(T) + n^{1-\alpha}\ln^3 n$ where $Y_i^\static$ is defined in \eqref{Yi-definition}. As a consequence, it is enough to prove the upper bound for 
$Y_i^\static$, which can be obtained by considering an epidemic on
$G_i' \sim G\bigl(n', c'/n'\bigr)$ with external seeds emanating from infected vertices in ${\calT}_1\cup\calT_2$.
Since $I_i^{\static}(T)+R_i^{\static}(T)$ does not depend on the evolution of the SIR dynamics past time $T$, we can further assume that $G_i'$ does not receive further seeds after $t=T$, which allows us to bound $I_i^{\static}(T)+R_i^{\static}(T)$ by $R(\infty)$ on $G_i'$.
This puts us into the setting of Lemma~\ref{lem:singlecomm-multipleseeds}, with $c$ and $R_0$ replaced by
$c'=c\frac n{n'}$ and $R_0'=R_0\frac n{n'}\leq R_0$, implying in particular that $r_\infty'\leq r_\infty$.  To apply the lemma, we note that each vertex in $V_i^\static$ has $\Bin(|\calT_i|,c/n)$ edges joining it to 
$\calT_i$, showing it has probability $q=1-(1-c/n)^{|\calT_i|}=\tilde O(n^{-\alpha})$ of being part of the seed set, which in turn shows that the seed set is $o(n)$ with high probability.
\end{proof}
\begin{remark}\label{rem:tau12-lower}
In the main part of the paper, we defined the stopping time $\tau_{1\to 2}$ as the first time when  $I_2(t)+R_2(t)$ is non-zero, i.e.,  the first time an individual {\it with label 2} gets infected.  As pointed out in the main text, a natural, alternative definition would say that  $\tau_{1\to 2}$ is the first time a vertex located in community 2 (whether an individual with label 2 that is not traveling, or an individual with label 1 that is currently visiting community 2) gets infected.  The above proposition implies that $t_-$ is a lower bound for $\tau_{1\to 2}$ for both definitions.
\end{remark}

\subsection{Lower bounds in community 1} 

Let $T$, $\mathcal{T}$,  $V_i^{\static}$,  $I_i^{\static}(t)$,
$R_i^{\static}(t)$ and $Y_i^\static(t)$ be as in the last section, and let
\begin{equation}
\label{t12-in-community1}
\tau_i^{\static}(\epsilon)=\inf\{t\geq 0:Y_i^0(t)\geq \epsilon n\}.
\end{equation}
In this section we prove the following proposition.

\begin{proposition} \label{prop_lower_bounds_comm1}
    Consider the setting of Theorem 2.2 in the main text, and assume that $R_0>1$. Fix $\epsilon \in (0,r_\infty)$, $\delta, \delta'\in (0,1)$ and let
    \[
    \tau^+=({1+\delta})\frac {\ln n}\lambda\quad\text{and}\quad \tau^+_{1\to 2}=(1+\delta)\frac {\alpha \ln n}\lambda.
    \]
With probability at least $\pi - \delta'$, the following holds provided $n$ is sufficiently large
\begin{enumerate}
\item   $\tau_1(\eps)\leq \tau_1^{\static}(\epsilon)\leq \tau^+$ and
        $R_1(\infty)\ge Y_1^\static(\tau^+)\ge (r_\infty - \delta) n$
\item
       $       Y_1^{\static}(\tau^+_{1\to 2} )\geq n^{\alpha(1+\delta/4)}$,
       with at least $n^{\alpha(1+\delta/8)}$ of the individuals in $V_1^\static$ contributing to $Y_1^{\static}(\tau^+_{1\to 2} )$
       having a recovery time at least $1$.
\end{enumerate}
\end{proposition}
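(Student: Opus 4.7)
The plan is to couple $Y_1^\static(t)$ from below by a standalone SIR process on the induced subgraph $G_1'$ of $V_1^\static$, and then invoke the single-community machinery of Theorem~\ref{thm:single-comm} and Lemma~\ref{lem:emprical-distribution-Gnp}. First, I fix the travel clocks, which determines $V_1^\static$. By Lemma~\ref{lem:few-travelers}, w.h.p.\ $n' := |V_1^\static| = n - \tilde O(n^{1-\alpha})$, so $G_1' \sim G(n', c'/n')$ with $c' = cn'/n \to c$; consequently $\lambda' = (c'-1)\beta - \gamma$, $R_0'$, and, by Theorem~\ref{thm:single-comm}\ref{bullet:single_comm_prob_outbreak}'s continuity statement, also $\pi' := \pi(c', \beta, \gamma)$ and $r_\infty(R_0')$, all converge to their unprimed analogues. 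The initial seed $v_0 \in V_1^H$ lies in $V_1^\static$ with probability $1 - \tilde O(n^{-\alpha})$. The key coupling, built on the shared-clock construction from Section~4.2, is as follows: any edge with both endpoints in $V_1^\static$ has both endpoints at home in community $1$ throughout $[0, T]$ and is therefore never paused, so pre-drawn infection and recovery clocks along these edges define both the standalone SIR on $G_1'$ and the restriction of the actual two-community dynamics to $V_1^\static$. Since further infections transported into $V_1^\static$ by travelers only help, $Y_1^\static(t)$ stochastically dominates the size of the standalone process.

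By Theorem~\ref{thm:single-comm}\ref{bullet:single_comm_prob_outbreak} applied to $G_1'$, the event $\calE^\static$ that the standalone process produces a large outbreak has probability tending to $\pi$, so exceeds $\pi - \delta'$ for $n$ large. I will show that both parts of the proposition hold conditionally on $\calE^\static$ with probability $1 - o(1)$. For part~1, pick $\epsilon' \in (\max\{\epsilon, r_\infty - \delta/2\}, r_\infty)$, noting $\epsilon' < r_\infty'$ for $n$ large. Theorem~\ref{thm:single-comm}\ref{bullet:single_comm_tau_concentration} gives $\tau^\static(\epsilon') / ((\lambda')^{-1}\ln n) \pto 1$ on $\calE^\static$, so w.h.p.\ $\tau_1^\static(\epsilon) \leq \tau^\static(\epsilon') \leq \tau^+$ and $Y_1^\static(\tau^+) \geq \epsilon' n \geq (r_\infty - \delta) n$; the bound $\tau_1(\epsilon) \leq \tau_1^\static(\epsilon)$ is immediate from $I_1(t) + R_1(t) \geq Y_1^\static(t)$, and $R_1(\infty) \geq Y_1^\static(\tau^+)$ by monotonicity. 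For part~2, WLOG assume $\delta$ is small enough that $a := \alpha(1 + \delta/4) \in (0,1)$, and set $\delta'' = \alpha\delta/16$ so that $a - \delta'' \geq \alpha(1 + \delta/8)$. Theorem~\ref{thm:single-comm}\ref{bullet:single_comm_init_growth} gives $\tau^\static(n^{a-1})/(\lambda^{-1}\ln n) \pto a$, so w.h.p.\ on $\calE^\static$ the standalone process has reached size $\lceil n^a \rceil \geq n^{\alpha(1+\delta/4)}$ by time $\tau_{1\to 2}^+$. Finally, Lemma~\ref{lem:emprical-distribution-Gnp} applied to $G_1'$ with $x = 1$ guarantees that at least $n^{a - \delta''} \geq n^{\alpha(1 + \delta/8)}$ of these first $\lceil n^a \rceil$ infected vertices have recovery time at least $1$.

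The only nonroutine step, and the main technical care required, is justifying the stochastic domination $Y_1^\static(t) \geq |\text{standalone SIR}(t)|$ in the presence of the travel-induced clock-pausing mechanism. The point that makes this work is that pausing only affects edges incident to travelers and never affects the internal edges of $V_1^\static$, so the shared-clock construction gives an exact coupling on $V_1^\static$, and the extra seeds brought in by travelers contribute monotonically. Beyond this coupling, the argument is a routine application of the single-community results, with the continuity of $\pi$, $r_\infty$, and the timing constants in $c$ absorbing the perturbation from $c$ to $c'$.
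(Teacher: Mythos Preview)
Your proposal is correct and follows essentially the same approach as the paper: both reduce to the standalone SIR on $G_1'\sim G(n',c'/n')$ via the shared-clock coupling, use monotonicity to pass the single-community bounds back to $Y_1^\static$, and invoke Theorem~\ref{thm:single-comm} together with Lemma~\ref{lem:emprical-distribution-Gnp} on the high-probability event of a large outbreak in $G_1'$. The only cosmetic difference is that the paper tracks the $n'$ versus $n$ and $\lambda'$ versus $\lambda$ discrepancies explicitly by inflating the exponent (setting $\alpha'=\alpha(1+\delta/3)$ and $\epsilon'=\epsilon n/n'$), whereas you absorb these $o(1)$ perturbations directly into the slack between $\alpha(1+\delta/4)$ and $(1+\delta)\alpha$; both are fine.
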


\begin{proof}
As in the proof of Proposition~\ref{prop:Rinfty-upper-bd}, we couple to the static subgraph process on $G_1'\sim G(n', c/n')$, but this time, we will 
derive \emph{lower bounds} by a coupling to what we call a \emph{restricted epidemic process} on $V_1^{\static}$ by disallowing any edges or infections involving $\calT=\calT_1\cup \calT_2$ during $[0,T]$. To motivate this coupling, we note that
throughout $[0,T]$, the epidemic on $V_1^{\static}$ in the full travel-based setting is stochastically \emph{at least as fast} as it is in the single-community setting on $G_1'$, because:
\begin{itemize}
  \item In the actual process, the nodes in $V_1^{\static}$ can still gain exposure from infected travelers (type-1 or type-2) who move into community~1. Thus, the infection can only spread earlier or faster in the real process.
  \item Removing the travelers (i.e., ignoring any infections they cause or receive) can only reduce the outbreak size and slow the onset of infections, giving a lower bound for the real process.
\end{itemize}
Because edges among $V_1^{\static}$ are exactly those in $G_1'$, and these edges remain active for times $[0,T]$, the restricted process is precisely the single-community SIR on $G_1'$. By monotonicity of SIR processes under edge removals, the actual epidemic on $V_1$ (and hence community 1) is stochastically at least as large---i.e., at any fixed time $t \le T$, the number of infections in $V_1^{\static}$ under the full model is at least the number of infections under the restricted single-community epidemic.

From here on the proof is straightforward: as before, let $n' = |V_1^{\static}|$, and let 
\[
   E_{\mathrm{travel}} =\big\{|\mathcal{T}| \le 
  n^{1-\alpha} \ln^3 n \big\}
\]
be the high-probability event from Lemma~\ref{lem:few-travelers}. 
Conditioned on $ E_{\mathrm{travel}}$, we have $n' \ge n - n^{1-\alpha} \ln^3 n$. Since we assume $\alpha > 0$, the ratio $n'/n \overset{p}{\to} 1$ as $n\to\infty$. Furthermore, conditional on $V_1^{\static}$, the induced contact network on these non-traveling nodes is an Erd\H{o}s--R\'{e}nyi graph
\[
   G_1' \sim G\bigl(n', c'/n'\bigr)
\]
with $c' = cn'/n$
since edges among these vertices remain active throughout $[0,T]$, and there are no new edges formed via travel into $V_1^{\static}$.
The standard SIR epidemic over $G(n', c/n')$ then has parameters $R_0' = \frac{c'\beta}{\beta + \gamma}1$ and
$\lambda' = c'\beta - \beta - \gamma$.
Noting that $R_0'\overset{p}{\to} R_0$, 
$\lambda'\overset{p}{\to} \lambda$, we have that the infection probability and final size converge as well,
$r_\infty'\pto r_\infty$ and $ \pi'\pto \pi$ due to the continuity of $r_\infty$ and $\pi$ in $R_\infty$ and $c$.
By the properties of the standard SIR epidemics over $G(n', c/n')$
summarized in Theorem~\ref{thm:single-comm} and Lemma~\ref{lem:emprical-distribution-Gnp}, the following statements then hold with probability at least $\pi' - \delta'/2$ if $n$ is sufficiently large:
\begin{enumerate}
      \item  Let 
$\epsilon'=\epsilon n/n'$ and let      
    $\tau_1'(\eps')$ be the first time the infected set in $G_1'$ reaches $n'\epsilon'=n\epsilon$ many vertices.  Then
    \[
      \tau_1'(\eps') \;\le\; (1+\delta/4)\,\frac{\ln n'}{\lambda'}\leq \tau^+.
    \]
\item Similarly, setting $\alpha'=\alpha (1+\delta/3)$, we have
  \[
      \tau_1'((n')^{\alpha'-1}) \;\le\;  (1+\delta/4)\,\frac{\alpha'\ln n'}{\lambda'}\leq  (1+3\delta/4)\,\frac{\alpha\ln n'}{\lambda'}
     \leq \tau^+_{1\to 2} .
    \]
and  hence
\[ I_1^{\static}(\tau^+_{1\to 2} )+R_1^{\static}(\tau^+_{1\to 2} )\geq (n')^{\alpha'}\geq
n^{\alpha(1+\delta/4)}\]
\item At least $(n')^{\alpha'(1-\delta/8)}\geq n^{\alpha(1-\delta/8)}$
of the first $(n')^{\alpha'}$ infected individuals have a recovery time at least $1$.
\end{enumerate}
Denote by $E_{\mathrm{1comm}}$ the event that the single-community SIR on $G_1'$ behaves ``as expected'', i.e., 1-3 hold.
We then combine the events $E_{\mathrm{travel}}$ and $E_{\mathrm{1comm}}$ via a union bound:
\[
  \Pr\bigl(E_{\mathrm{travel}} \cap E_{\mathrm{1comm}}\bigr) 
    \;\ge\; 1 - \Bigl[\,\Pr(E_{\mathrm{travel}}^c) + \Pr(E_{\mathrm{1comm}}^c)\Bigr]
    \;\ge\; 1 - \delta'-\pi.\]
On $E_{\mathrm{travel}}$, we have $n' \ge n 
-\tilde O(n^{1-\alpha})$ and thus $n'/n \to 1$. On $E_{\mathrm{1comm}}$, the single-community SIR in $G_1'$ infects at least $( r_\infty-\delta)n$ nodes by time $T$ and reaches size $\eps n$ by time at most $ \tau^+=(1+\delta)\frac{\ln n}{\lambda}$.

By the coupling argument, the actual epidemic in $V_1$ must infect at least as many nodes in $V_1^{\static}$ (and do so at least as fast). Hence, on the event $E_{\mathrm{travel}} \cap E_{\mathrm{1comm}}$
\[
   \tau_1(\eps)
   \le\tau_1^{\static}(\eps)
   \le
   (1+\delta)\,\frac{\ln n}{\lambda},
\]
\[ Y_1^{\static}(\tau^+_{1\to 2})\geq n^{\alpha(1+\delta/4)},\]
with at least $n^{\alpha(1+\delta/8)}$ of the individuals in $V_1^\static$ contributing to $Y_1^{\static}(\tau^+_{1\to 2} )$ having a recovery time at least $1$,
all for $n$ sufficiently large. This proves the statements of the proposition, with the statement on $R_1(\infty)$ following from the first statement by setting $\epsilon=r_\infty -\delta$.
\end{proof}

\subsection{Lower Bounds for Community 2}
For the lower bounds in community 2, we will want to use Lemma~\ref{lem:singlecomm-multipleseeds}, which means we will need to understand the transmission of infections from community 1 to 2 via travelers.  For this, it will be important to define a suitable coupling on the whole two-community network.

To define the coupling, we first predetermine the sets $\calT_1$ and $\calT_2$ of vertices that will travel before time $T=\ln^2 n$ keeping track of the fact that when we draw the travel times for vertices in these sets the first travel has to happen before time $T$.  Assuming the first infected vertex is in $V_1^\static$, an event which happens with high probability, we then define a set of discovered vertices, $V_i^{disc}(t)\subset \calT_i$
which is initially empty, and gets updated as part of our coupling as follows. Starting with one initially infected individual chosen uniformly at random in $V_1^\static$, whenever an individual $v$ gets infected we do the following:
 \begin{enumerate}
        \item Start a recovery clock which clicks at rate $\gamma$
        \item\label{forward-degree} Draw four forward degrees, $d_i^\static(v)\sim \Bin (|V_i^\static|,c/n)$ and $d_i^{\calT}(v)\sim \Bin(|\calT_i|,c/n)$ with $i=1,2$
        \item Draw half-edges without an endpoint for the  degrees $d_i^\static$ into static vertices 
        \item\label{discovery-step} Draw vertices $v_{i,1},\dots,v_{i,d_i^\calT}\in \calT_i$
        uniformly without replacement, and link them to $v$ by an oriented edge pointing away from $v$; if any of these vertices has not yet been discovered, add them to
        $V_i^{disc}$,  draw their initial travel state from the conditional distribution, and update their state according to the conditional distribution going forward
\item Start infection clocks for the edges and half-edges out of $v$ (and note that we can determine which edges/half-edges are active, since the travel state of the vertices in $V_i^\static$ is determined)
     \item Once the first infection clock into $V_i^\static$ clicks, we choose $d_i^\static$ endpoints $v_{i,1}^\static,\dots v_{i,d_i^\static}^\static$ uniformly in $V_i^\static$ without replacement, starting with the endpoint for the half-edge which just clicked.  With a slight abuse of notation, we will say that at this point, the infection clock on the edge $vv_{i,1}^\static$ has clicked
    \item When an infection clock on an edge $vw$ clicks, and $w$ is still infected, we say that $v$ tries to infect $w$; if at this point in time $w$ is still susceptible, we call the infection attempt successful and declare $w$ infected.
\end{enumerate}

We would like to couple this process to the following process, which is easier to analyze:
\begin{itemize}
    \item In Step~\ref{forward-degree}, we choose
    $d_i^\calT(v)\sim \Bern(|\mathcal T_i|c/n)$ when $v\in V_1^\static\cup V_2^\static$, and set $d_i^\calT(v)=0$ when $v\notin V_1^\static\cup V_2^\static$.
    \item In Step~\ref{discovery-step}, when $v\in V_1^\static\cup V_2^\static$ and $d_i^\calT=1$, instead of choosing 
     $v_{i,1}^{\calT}\in \calT_i$ uniformly and copying the travel clock of a previously discovered vertex if the uniform choice falls onto such a vertex, we chose  $v_{i,1}^{\calT}\in \calT_i$ uniformly among the not yet discovered vertices, and in particular add them to $V_i^{disc}$ with a fresh draw of travel states from the conditional distribution.
\end{itemize}

\begin{lemma}\label{lem:no-conflict-coupling}
    Let $0<\delta\leq \frac{1-\alpha}2$
    and let 
    $\tau_{1\to 2}^+$ be as in Proposition~\ref{prop_lower_bounds_comm1}.
    Then the above two processes can be coupled such that w.h.p. they are equal for all $t\leq \tau_{1\to 2}^+$.
\end{lemma}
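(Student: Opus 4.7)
The plan is to apply the branching-process upper bound of Proposition~\ref{prop:BP-upper-bound} to control the total size of the epidemic by time $\tau_{1\to 2}^+$, and then to enumerate the bad events that would prevent the two processes from being coupled identically, bounding each by a union bound. Since $\lambda \tau_{1\to 2}^+ = \alpha(1+\delta)\ln n$, Proposition~\ref{prop:BP-upper-bound} gives that w.h.p.\ the total number of vertices ever infected by time $\tau_{1\to 2}^+$ is $N := \tilde O(n^{\alpha(1+\delta)})$, with only $\tilde O(n^{\alpha\delta})$ of them of type~$2$. Combined with $|\calT_i|=\tilde O(n^{1-\alpha})$ from Lemma~\ref{lem:few-travelers}, these are the quantities driving every estimate.

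I would identify three sources of coupling failure: \emph{(A)} some infected vertex $v$ has $d_i^\calT(v)\ge 2$ in the original process (so it cannot be matched to a Bernoulli); \emph{(B)} some vertex $v\in\calT$ becomes infected before $\tau_{1\to 2}^+$ and has a positive forward degree into $\calT$ in the original process, whereas the simplified process forces $d_i^\calT(v)=0$ for $v\notin V^\static$; \emph{(C)} a uniform draw from $\calT_i$ in the original process hits a previously discovered vertex, while the simplified process draws uniformly from $\calT_i\setminus V_i^{\mathrm{disc}}$. On the complement of (A), (B), and (C), the $\Bern(|\calT_i|c/n)$ and $\Bin(|\calT_i|,c/n)$ variables can be coupled up to a per-vertex total variation of $O((|\calT_i|/n)^2)=\tilde O(n^{-2\alpha})$ (absorbed into the bound for (A)), and the uniform draws can be made equal, so the two processes produce the same history up to $\tau_{1\to 2}^+$.

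For (A), the event $\{\Bin(|\calT_i|,c/n)\ge 2\}$ has probability $\tilde O(n^{-2\alpha})$, so a union bound over the $N$ infected vertices gives $\tilde O(n^{\alpha(1+\delta)-2\alpha})=\tilde O(n^{-\alpha(1-\delta)})=o(1)$ since $\delta<1$. For (B), the expected number of infections into $\calT$ is at most $N\cdot |\calT|c/n=\tilde O(n^{\alpha\delta})$, and each such vertex has a positive forward degree into $\calT$ with probability $\tilde O(n^{-\alpha})$, giving $\tilde O(n^{\alpha(\delta-1)})=o(1)$. The main obstacle, and the tightest estimate, is (C): the total number of draws into $\calT_i$ is $K=\tilde O(n^{\alpha\delta})$ w.h.p., and the probability that the $k$-th such draw collides is at most $k/|\calT_i|$, so the expected number of collisions is $\tilde O(K^2/|\calT_i|)=\tilde O(n^{2\alpha\delta+\alpha-1})$; the assumption $\delta\le (1-\alpha)/2$ makes this exponent at most $-(1-\alpha)^2<0$ for $\alpha\in(0,1)$, yielding $o(1)$. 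A final union bound of these three failure events with the high-probability events from Proposition~\ref{prop:BP-upper-bound} and Lemma~\ref{lem:few-travelers} then completes the coupling.
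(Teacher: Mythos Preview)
Your proposal is correct and follows essentially the same approach as the paper: both use Proposition~\ref{prop:BP-upper-bound} to bound the total infected size by $\tilde O(n^{\alpha(1+\delta)})$, then handle the three coupling failures---Binomial vs.\ Bernoulli (your (A)), a vertex in $\calT$ drawing a positive degree into $\calT$ (your (B), handled in the paper via a stopping time), and collisions among the uniform draws into $\calT_i$ (your (C))---with the same estimates, the tightest being $\tilde O(n^{-(1-\alpha)^2})$ for (C), which is precisely where the hypothesis $\delta\le(1-\alpha)/2$ enters.
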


\begin{proof} 

Let $V_{I+R}^\static(t)$ be the set of infected vertices in $V^\static=V_1^\static\cup V_2^\static$,  let $N(t)=|V_{I+R}^\static(t)|=I^\static(t)+R^\static(t)$ and note that by Proposition~\ref{prop:BP-upper-bound}, w.h.p.
\begin{equation}\label{I+R smaller nalpha}
N(t)=\tilde O(n^{(1+\delta)\alpha}) \quad\text{for all}\quad t\leq \tau_{1\to 2}^+.
\end{equation}
Next, recalling that by Lemma \ref{lem:few-travelers} $|\calT| = \tilde O(n^{1-\alpha})$, we use that for $v\in V^\static$,
$$\Pr\left(d_i^\calT(v)\geq 2\right)=O(|\calT|^2n^{-2})=\tilde O(n^{-2\alpha}),
$$
showing that we can couple $d_i^{\calT}(v)\sim \Bin(|\calT_i|,c/n)$
to  a $\Bern(|\mathcal T_i|c/n)$ random variable such that they are equal with probability $1-\tilde O(n^{-2\alpha})$.
Since $N(t)=|V_{I+R}^\static|=\tilde O(n^{(1+\delta)\alpha})$ this shows that we can do this coupling for all
$v\in V_{I+R}^\static$ such that the new random variables are equal to the original ones with probability
$1-\tilde O(n^{-(1-\delta)\alpha})\to 1$.

Next we define $\tau$ to be the stopping time when the first infected vertex $v\in\calT$ draws a degree $d_i^\mathcal T>0$. For $t<\tilde\tau=\min\{\tau_{1\to 2}^+,\tau\}$, the set of discovered vertices $V^{disc} = V_1^{disc} \cup V_2^{disc}$ is just the set of vertices discovered by an infected vertex in $V_{I+R}^\static$, showing that
$$
|V^{disc}|\leq\sum_{v\in V_{I+R}^\static}(d_1^\calT(v)+d_2^\calT(v)).$$  This random variable has 
distribution $\Bin(N(t)|\calT|,c/n)$, and, by \eqref{I+R smaller nalpha}, it is bounded w.h.p. by $\tilde O(|\calT|n^{(1+\delta)\alpha-1})=\tilde O(n^{\alpha\delta})$. This implies that by time $\tilde \tau$, the total number of infections emanating from $\calT$ is bounded by $\tilde O(n^{\alpha\delta})$, which shows that the probability of drawing a random variable $d_i^\calT(v)>0$ is bounded by
\[
\tilde O(n^{\delta \alpha})|\calT|\frac cn=
\tilde O(n^{-(1-\delta)\alpha})=o(1).
\]
Thus, with high probability, the stopping time $\tau$
happens after $\tau_{1\to 2}^+$.  
This allows us to couple Step~\ref{forward-degree}.

Concerning Step~\ref{discovery-step}, we note that under the assumption that  $d_i^\mathcal T=0$ for all $v\in\calT$, the discovery steps now involve just the edges from $V_{I+R}^\static$ into $\calT$.  Since the total number of these edges is a random variable with distribution  $\Bin(N(t)|\calT|,c/n)$, it is w.h.p. bounded by $\tilde O(|\calT|n^{(1+\delta)\alpha-1})$, where we again used
\eqref{I+R smaller nalpha}.  
  Since the endpoints of these edges are chosen i.i.d. uniformly at random in $\mathcal T$, the probably of a two of them hitting the same vertex is bounded by
  $$\frac 1{|\calT|}\tilde O(|\calT|^2n^{(2+2\delta)\alpha-2})=
  \tilde O(|\calT|n^{(2+2\delta)\alpha-2})
  =\tilde O(n^{(1+2\delta)\alpha-1})=\tilde O(n^{-(1-\alpha)^2})=o(1),
   $$
where we used that $\delta\leq\frac {1-\alpha}2$   in the second to last step.
This shows that w.h.p. a single vertex will not be chosen twice and all newly selected vertices will be undiscovered.
\end{proof}

\begin{proposition}\label{prop_lower_bounds_comm2}
Consider the setting of Proposition~\ref{prop_lower_bounds_comm1}, assuming in addition that $\delta\leq \frac {1-\alpha}2$, and set 
$$\tau_2^+=\tau_{1\to 2}^++\tau^+
    =(1+\delta)\frac{(1+\alpha)\ln n}\lambda.$$
Then with probability at least $\pi - \delta'$, the following hold provided $n$ is sufficiently large:
\begin{enumerate}
\item   
    $I_2(\tau^+_{1\to 2}+3)+R_2(\tau^+_{1\to 2}+3)\geq Y_2^{\static}(\tau^+_{1\to 2}+3)\geq n^{{\alpha\delta}/{16}}$
    
\item   
    $\tau_2(\eps)\leq \tau_2^{\static}(\epsilon)\leq \tau_2^++3$ 
\quad\text{and}\quad 
    $\tau_2(n^{-(\alpha+\delta)})\leq \tau_2^\static(n^{-(\alpha+\delta)})
    \leq(1 - \delta^2)\frac{\ln n}{\lambda}+3$
\item 
    $R_2(\infty)\ge I_2^{\static}(\tau^+_2) +R_2^\static(\tau^+_2)\ge (r_\infty - \delta) n$.
\end{enumerate}
\end{proposition}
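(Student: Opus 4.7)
The plan is to build on Proposition~\ref{prop_lower_bounds_comm1} by using the travelers $\calT$ to deposit enough seeds in $V_2^\static$ by time $\tau_{1\to 2}^++3$, and then apply Lemma~\ref{lem:singlecomm-multipleseeds} to the static community~$2$ process on $G_2'\sim G(|V_2^\static|, c/|V_2^\static|)$. Throughout, I work on the intersection of the event from Proposition~\ref{prop_lower_bounds_comm1}, the event $E_{\mathrm{travel}}$ from Lemma~\ref{lem:few-travelers}, and the ``no-conflict'' coupling of Lemma~\ref{lem:no-conflict-coupling}, which together hold with probability at least $\pi-\delta'$ for $n$ large. On this event there is a set $A\subseteq V_1^\static$ of infected vertices by time $\tau_{1\to 2}^+$ with $|A|\ge n^{\alpha(1+\delta/8)}$, each having recovery time at least $1$, and $|\calT|\le n^{1-\alpha}\ln^3 n$.

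Next, I count transmission paths $v\to u\to w$ with $v\in A$, $u\in\calT$ and $w\in V_2^\static$ that complete by time $\tau_{1\to 2}^++3$. For each triple, I require that (i) the edge $vu$ exists (probability $c/n$), (ii) $u$ realises the travel event $\calE_{HT}(u,t_v)$ if $u\in\calT_1$ or $\calE_{TH}(u,t_v)$ if $u\in\calT_2$ (conditional probability $\Theta(1/\ln^2 n)$ by Lemma~\ref{lem:few-travelers}), ensuring $u$ and $v$ share community~$1$ during $[t_v,t_v+1]$ and $u$ stays in community~$2$ for at least one unit in $[t_v+2,t_v+3]$, (iii) the $vu$ infection clock fires within $[t_v,t_v+1]$ and $u$'s recovery time exceeds $3$ (constant probability), and (iv) the edge $uw$ exists and its clock fires during $u$'s stay in community~$2$ (product $\Theta(c/n)$). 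Using the independence provided by the coupling of Lemma~\ref{lem:no-conflict-coupling}, summing the expected count of such triples gives
\[
\Theta\!\left(|A|\cdot|\calT|\cdot|V_2^\static|\cdot \frac{1}{\ln^2 n}\cdot\frac{c^2}{n^2}\right) = \Theta\!\left(n^{\alpha(1+\delta/8)}\cdot n^{1-\alpha}\cdot n \cdot n^{-2}\right) = \Theta(n^{\alpha\delta/8}).
\]
A Chebyshev argument on this sum then yields at least $n^{\alpha\delta/16}$ successful paths w.h.p., and since this count is $n^{o(1)}$ compared to $|V_2^\static|$, collisions among the terminal vertices are negligible, so $Y_2^\static(\tau_{1\to 2}^++3)\ge n^{\alpha\delta/16}$.

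Finally, the coupling of Lemma~\ref{lem:no-conflict-coupling} makes each seed's endpoint $w$ uniform in $V_2^\static$, so I apply Lemma~\ref{lem:singlecomm-multipleseeds}~(2) on $G_2'$ with $T=\tau_{1\to 2}^++3$ and $\omega_n=n^{\alpha\delta/16}$. Choosing the internal tolerance small enough, this produces
\[
\tau_2^\static(\epsilon)\le T+(1+\tfrac{\delta}{4})\frac{\ln n}{\lambda}\le \tau_2^++3,
\]
\[
\tau_2^\static(n^{-(\alpha+\delta)})\le T+(1+\tfrac{\delta}{4})(1-\alpha-\delta)\frac{\ln n}{\lambda}\le (1-\delta^2)\frac{\ln n}{\lambda}+3,
\]
where the second inequality uses $\delta\le (1-\alpha)/2$ together with a slight sharpening of the tolerance. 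Applying the lemma with $\epsilon=r_\infty-\delta/2$ and invoking continuity of $r_\infty$ in the effective $R_0'=R_0|V_2^\static|/n\to R_0$ gives $Y_2^\static(\tau_2^+)\ge (r_\infty-\delta)n$, from which $R_2(\infty)\ge Y_2^\static(\tau_2^+)\ge (r_\infty-\delta)n$ follows immediately. The inequalities $\tau_2(\cdot)\le\tau_2^\static(\cdot)$ are trivial since $Y_2\ge Y_2^\static$.

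The principal technical difficulty is the concentration step: the indicators $X_{v,u,w}$ are correlated through a shared traveler $u$, whose travel clocks and recovery clock are common to all paths through $u$. However, the expected number of distinct $w$ infected by a single $u$ is $|V_2^\static|\cdot(c/n)\cdot O(1)=O(1)$ (and at most $O(\ln n)$ w.h.p.\ by a standard tail bound on $\Bin(|V_2^\static|,c/n)$), so the variance contribution from paths sharing a traveler is only a polylog factor larger than $\E Z$. Since the expectation $n^{\alpha\delta/8}$ vastly exceeds the target $n^{\alpha\delta/16}$, Chebyshev succeeds comfortably.
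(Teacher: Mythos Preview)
Your strategy---use the dangerous vertices from Proposition~\ref{prop_lower_bounds_comm1} to push infection through travelers into $V_2^\static$, then invoke Lemma~\ref{lem:singlecomm-multipleseeds}---is exactly the paper's, and your arithmetic for the seed count and for the bounds on $\tau_2^\static(\epsilon)$ and $\tau_2^\static(n^{-(\alpha+\delta)})$ matches.

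The one place you diverge is the concentration step. You sum over triples $(v,u,w)$ and run a second-moment argument, worrying about correlations through shared travelers~$u$. The paper instead exploits the \emph{simplified} coupling of Lemma~\ref{lem:no-conflict-coupling} more fully: under that coupling each dangerous $v\in V_1^\static$ is joined to \emph{at most one} traveler in $\calT_2$ (the degree is $\Bern(|\calT_2|c/n)$), those travelers are all \emph{distinct} with fresh travel clocks, and the first half-edge each one fires into $V_2^\static$ lands at a uniform vertex. Hence the events ``$v$ seeds a uniform vertex in $V_2^\static$ by time $\tau_{1\to 2}^++3$'' are genuinely i.i.d.\ $\Bern(q)$ with $q=\Theta(n^{-\alpha})$ across dangerous vertices, and a multiplicative Chernoff bound gives $\ge\tfrac12 q\,n^{\alpha(1+\delta/8)}\ge n^{\alpha\delta/16}$ seeds directly. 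Your Chebyshev route is valid, but the shared-$u$ correlations you analyze simply do not arise once you work in the simplified coupling, so the variance computation is avoidable. (Incidentally, the paper restricts to $u\in\calT_2$ with the event $\calE_{TH}$; your inclusion of $\calT_1$ with $\calE_{HT}$ is harmless but also unnecessary for the lower bound.)
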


\begin{proof}  

To prove the proposition, we will first use the coupling described before Lemma~\ref{lem:no-conflict-coupling} to show that by the time $\tau_{1\to2}^++3$, there have been at least $n^{\delta/16}$ external infection events into $V_2^\static$, with the  seeds $w\in V_2^\static$ chosen i.i.d. uniform at random, and then apply Lemma~\ref{lem:singlecomm-multipleseeds}.
To this end, we we note that the SIR epidemic on $V^\static_1$ will only slow down if we neglect all re-infections into $V_1^\static$, and that we can lower bound the number of seeds into $V_2^\static$ by only considering the seeds  which appear because a vertex in $V_1^\static$ infects a vertex in $\calT_2$ which is traveling and returns before its infection clock runs out. 

Define an individuals $v$ contributing to $Y_1^\static(\tau_{1\to 2}^+)$ dangerous if its recovery clock lasts for at least one time unit.   By Proposition~\ref{prop_lower_bounds_comm1} and Lemma~\ref{lem:emprical-distribution-Gnp},  with probability at least $\pi-\delta'/2$, the 
set of
dangerous vertices has size at least $ n^{\alpha(1+\delta/8)}$.
 Furthermore, by Lemma~\ref{lem:no-conflict-coupling},
independently for a dangerous vertex $v\in V_1^\static$ infected at some time $t\leq \tau_{1\to 2}^+$,
\begin{enumerate}
\item  With probability $|\calT_2|c/n$ the vertex $v$ is connected to a single vertex $w\in \calT_2$ 
\item With probability  $\Pr(\calE_{TH}(v,t)\mid w\in \calT_2)=\Theta\left(\frac 1{\ln^2 n}\right)$, $w$ is traveling for one time unit after $v$ got infected, returns home latest one time unit later, and stays home for at least one time unit, implying in particular that the edge $vw$ is active during at least one time unit starting from the infection of $v$.
\item With probability $1-e^{-\beta}$, the infection clock on the edge $vw$ clicks within time $1$ after $v$ was infected, implying $w$ gets infected.
\item With probability $e^{-3\gamma}$ the individual $w$  stays infected for at least three time units, so in particular is home and infected during a time interval at length 1 ending latest at $t+3$
\item With probability of order $\Theta(1)$, the degree $d_2^\static(w)$
into $V_2^\static$ is positive
\item With probability at least $1-e^{-\beta}$ during the time interval of length one following its return home, $w$ creates a new, uniformly random seed in $u\in V_2^\static$.
\end{enumerate}
Thus the probability of transmitting the infection from a dangerous vertex in $v\in V_1^\static$ to a uniformly random seed $u\in V_2^\static$ by time $t_{1\to 2}^++3$ is bounded below by
$$q=\Theta(|\calT_2| n^{-1}\ln^{-2}n)=\Theta(n^{-\alpha}),
$$
i.i.d. for all dangerous vertices. By a simple multiplicative Chernov bound, 
we see that with probability at least $\pi-\delta'$, we have at least $\frac 12 q  n^{\alpha(1+\delta/8)} \geq n^{\alpha\delta/16}$ i.i.d. uniform seeds into $V_2^\static$
by time $\tau_{1\to2}^++3$.
The first statement of the proposition then follows by observing that by the birthday paradox, with high probability all these seeds are distinct.
The second and third statements follow with the help of Lemma~\ref{lem:singlecomm-multipleseeds}, where the proof of the statement about $\tau_2(n^{-(\alpha+\delta)})$ uses the fact that with high probability
$$
\tau_{1\to 2}^+ +3+(1+\delta)\frac{(1-\alpha-\delta)\ln n}\lambda=(1-\delta)(1+\delta)\frac{\ln n}\lambda +3 =(1-\delta^2)\frac{\ln n}\lambda+3.
$$
\end{proof}

\begin{remark}\label{rmk:tau_1_2_upper_bd}
    Note that a direct result of Proposition \ref{prop_lower_bounds_comm2} is that if $\delta'>0$ and $0<\delta \leq \frac{1-\alpha}{2}$, then for all large enough $n$,
    $$
   \Pr\left( \tau_{1\to 2} \leq (1+\delta)\frac{\alpha\ln n}\lambda+3\right)\geq \pi-\delta'
    $$
Similarly to the lower bound following from the first statement of Proposition~\ref{prop:Rinfty-upper-bd} (see Remark~\ref{rem:tau12-lower}), this bound continues to hold if instead of defining $\tau_{1\to 2}$ in terms of individuals with label 2, it is defined in terms of individuals physically present in community 2.
    \end{remark}

\section{Proof of Theorem \ref{thm_sir_tcm_non_int}}

For the  convenience of the reader, we restate Theorem \ref{thm_sir_tcm_non_int} from the main text as Theorem~\ref{SI-thm_sir_tcm_non_int} below.

\begin{theorem}[SIR epidemic with no interventions]\label{SI-thm_sir_tcm_non_int}
    Consider the SIR epidemic spreading on $G(t)$ with rate of travel given by $\rho_\Travel = \Theta(n^{-\alpha})$ for some $\alpha \in (0, 1)$.
        Then the following hold:
    \begin{enumerate}[label=(\alph*)]
        \item\label{SI-thm:main-large-outbreak} The probability of a large  outbreak converges to the survival probability $\pi\in[0,1]$ of the branching process  $\BPsub{c}(t)$.
        i.e.,
        \[
            \Pr(R_1(\infty) + R_2(\infty) \geq \ln n) \to \pi \text{ as } n\to\infty.
        \]
           \item \label{SI-thm:main-survival-prob} Conditioned on not having a large outbreak, the duration of the epidemic is bounded in probability uniformly in $n$, with high probability never reaches community 2, and only ever infects a  number of individuals in community 1 that is bounded in probability.  Finally, with high probability, the stopping times $\tau_1(\epsilon)$ and $\tau_2(\epsilon)$ are infinite.
        \item \label{SI-thm:main-final-size}Conditioned on a large outbreak, $n^{-1} R_1(\infty)$ and $n^{-1} R_2(\infty)$ converge to $r_\infty$, the survival probability of the branching process $T_{\Pois(R_0)}$,           \[
            \frac{\lambda}{\ln n}(\tau_{1\to 2}, \tau_1(\epsilon), \tau_2(\epsilon)) \pto (\alpha, 1, 1+\alpha),
        \]
        \[
            \frac 1{\ln n}\ln\left(I_2(\tau_1(\epsilon))+R_2(\tau_1(\epsilon))\right)\pto 1-\alpha,
        \]
   and the epidemic dies out in time $O(\ln n)$.
       
    \end{enumerate}
\end{theorem}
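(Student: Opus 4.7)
The strategy is to assemble the bounds already established in the preceding propositions; the remaining work is careful bookkeeping of probabilities. Parts~(a) and (b) follow directly from Proposition~\ref{pro:small-time-coupling}: taking $N_n=\ln n$, the coupling to $\BPsub{c}(t)$ shows that the probability of a large outbreak in community~1 converges to $\pi$, and on the complement the same proposition gives $R_2(\infty)=0$ whp and $R_1(\infty)$ bounded in probability, so $\Pr(R_1(\infty)+R_2(\infty)\geq \ln n)\to \pi$, the stopping times $\tau_i(\epsilon)$ are infinite whp, and the duration is bounded in probability.

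For part~(c), I would fix small $\delta,\delta'>0$, condition on a large outbreak, and sandwich each time-scale quantity. The upper bound $\tau_1(\epsilon)\leq (1+\delta)\ln n/\lambda$ is Proposition~\ref{prop_lower_bounds_comm1}, and the matching lower bound follows from Proposition~\ref{prop:BP-upper-bound}: having $\tau_1(\epsilon)<(1-\delta)\ln n/\lambda$ would force $\epsilon n\leq \tilde O(n^{1-\delta})$, a contradiction for $n$ large. The upper bound $\tau_{1\to 2}\leq (1+\delta)\alpha\ln n/\lambda+3$ is Remark~\ref{rmk:tau_1_2_upper_bd}, and the matching lower bound $\tau_{1\to 2}\geq (\alpha-\delta)\ln n/\lambda$ is Proposition~\ref{prop:Rinfty-upper-bd}. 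For $\tau_2(\epsilon)$ the upper bound is Proposition~\ref{prop_lower_bounds_comm2} and the lower bound again comes from Proposition~\ref{prop:BP-upper-bound}, now applied to $I_2+R_2\geq \epsilon n$.

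The statement about $\ln(I_2(\tau_1(\epsilon))+R_2(\tau_1(\epsilon)))/\ln n\to 1-\alpha$ is handled analogously: substituting the upper bound on $\tau_1(\epsilon)$ into Proposition~\ref{prop:BP-upper-bound} yields $I_2(\tau_1(\epsilon))+R_2(\tau_1(\epsilon))\leq \tilde O(n^{1-\alpha+\delta})$, while the auxiliary bound $\tau_2(n^{-(\alpha+\delta)})\leq (1-\delta^2)\ln n/\lambda+3$ from Proposition~\ref{prop_lower_bounds_comm2}, together with $\tau_1(\epsilon)\geq (1-o(1))\ln n/\lambda$, gives $I_2(\tau_1(\epsilon))+R_2(\tau_1(\epsilon))\geq n^{1-\alpha-\delta}$ for $n$ large. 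For the final-size convergences $n^{-1}R_i(\infty)\to r_\infty$, the lower bounds are immediate from Propositions~\ref{prop_lower_bounds_comm1} and \ref{prop_lower_bounds_comm2}. The matching upper bound follows from Proposition~\ref{prop:Rinfty-upper-bd} (which gives $R_i(T)+I_i(T)\leq (r_\infty+\delta)n$) once the epidemic is shown to be extinct by time $T=\ln^2 n$. To establish extinction I would pick $\epsilon\in (1-s_0,r_\infty)$, which is possible since Theorem~\ref{thm:single-comm}(a) guarantees $s_0>1-r_\infty$; at $t_0=\tau_2(\epsilon)=O(\ln n)$ both communities then satisfy $S_i(t_0)\leq (1-\epsilon)n<(s_0-\delta'')n$, so Proposition~\ref{prop:herd-immunity} delivers extinction within a further $O(\ln n)$ time, which simultaneously establishes the $O(\ln n)$ duration claim.

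The main obstacle is the somewhat delicate interplay between conditioning on a large outbreak and the various high-probability estimates, each of which is stated with unconditional probability at least $\pi-\delta'$. Since the large-outbreak event itself has probability tending to $\pi$ and the bad set for each estimate has unconditional mass $\delta'$, a union bound combined with a diagonal choice of $\delta,\delta'\to 0$ upgrades the sandwich estimates into the claimed convergences in probability, yielding the theorem.
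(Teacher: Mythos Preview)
Your proposal is correct and follows essentially the same route as the paper: parts (a) and (b) via Proposition~\ref{pro:small-time-coupling}, each stopping time in (c) sandwiched between a branching-process upper bound (Proposition~\ref{prop:BP-upper-bound} or Proposition~\ref{prop:Rinfty-upper-bd}) and a non-traveler lower bound (Propositions~\ref{prop_lower_bounds_comm1}, \ref{prop_lower_bounds_comm2}, Remark~\ref{rmk:tau_1_2_upper_bd}), final sizes via Propositions~\ref{prop_lower_bounds_comm1}--\ref{prop_lower_bounds_comm2} and \ref{prop:Rinfty-upper-bd}, and extinction via Proposition~\ref{prop:herd-immunity}; the conditioning argument you sketch is exactly what the paper writes out. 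One small technical point: Proposition~\ref{prop:herd-immunity} is stated for a \emph{deterministic} $t_0=t_0(n)$, so rather than taking $t_0=\tau_2(\epsilon)$ you should use its deterministic upper bound $\tau_2^+ + 3 = (1+\delta)(1+\alpha)\lambda^{-1}\ln n + 3$ (which the paper does); the event $\calE_0$ then holds with probability at least $\pi-\delta'$ by your chosen $\epsilon>1-s_0$ and the upper bounds on $\tau_1(\epsilon),\tau_2(\epsilon)$.
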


\noindent Given the results from the last section, the proof of this theorem is an easy exercise. 
\begin{enumerate}[label=(\alph*)]
    \item Let $\calE$ be the event $\{R_1(\infty) + R_2(\infty) < \ln n\}$. 
   Since $\ln n(n^{-1/2}+n^{-\alpha}\ln^3 n)\to 0$ as $n\to\infty$, Proposition \ref{pro:small-time-coupling} implies that
    on the event $\calE$,  $R_1(t) + R_2(t) = \BPsub{c}(t)$ for all $t$ Thus,
    \begin{align*}
        \Pr(\calE) = \Pr(R_1(\infty) + R_2(\infty) < \ln n) &= \Pr(\BPsub{c}(\infty) < \ln n) \xrightarrow{n\to\infty} 1-\pi.
    \end{align*}
    Taking the complement event implies the desired result.
    \item This is an immediate consequence of Proposition \ref{pro:small-time-coupling}. 
    \item 
    Let $\calO$ be the event that there is a large outbreak. 
    We first show that, conditioned on the event $\calO$, we have concentration for $\tau_{1\to 2}, \tau_1(\epsilon),$  $\tau_2(\epsilon)$ and $Y_2(\tau_1(\epsilon))=I_2(\tau_1(\epsilon)) + R_2(\tau_1(\epsilon))$, and that the epidemic dies out in $O(\log n)$ time. 
    
    First, consider $\tau_{1\to 2}$. By Proposition \ref{pro:small-time-coupling}, w.h.p. $\tau_{1\to 2} \geq \frac{\alpha-\delta}{\lambda}\ln n$. By Remark \ref{rmk:tau_1_2_upper_bd}, if 
    $\delta \leq \frac{1-\alpha}2$ then
    $\tau_{1\to 2} \leq (1+\delta)\frac{\alpha\ln n}\lambda$ with probability 
    at least $\pi - \delta'$. Letting 
    \[
        \calA_{1\to 2} =\left\{\frac{\alpha - \delta}{\lambda}\ln n \leq \tau_{1\to 2} \leq \frac{\alpha(1+\delta)}{\lambda}\ln n +3\right\},
    \]
    we have $\Pr(\calA_{1\to 2}) > \pi - \delta' - o(1)$. Now, conditioning on a large outbreak we have
    \[
        \Pr(\calA_{1\to 2} \mid \calO) = \frac{\Pr(\calA_{1\to 2}, \calO)}{\Pr(\calO)}.
    \]
    Notice that $\Pr(\calA_{1\to 2}, \calO) \leq \pi + o(1)$ since $\Pr(\calO) \to \pi$. By Proposition \ref{pro:small-time-coupling}, $\Pr(\calA_{1\to 2}, \calO^c) = o(1)$, since conditioned on no large outbreak, with high probability, the epidemic will not reach community 2. Thus, sending $\delta,\delta'\to 0$ and $n\to\infty$ gives concentration of $\tau_{1\to 2}$ at $\frac{\alpha}{\lambda}\ln n$ conditioned on a large outbreak.

    Next, consider $\tau_1(\epsilon)$. By Proposition \ref{prop_lower_bounds_comm1}, we have that if $R_0 > 1$, $\epsilon \in (0, r_\infty), \delta,\delta' \in (0, 1)$, then $\tau_1(\epsilon) \leq \frac{1+\delta}{\lambda}\ln n$ with probability at least $\pi - \delta'$. By Proposition \ref{prop:BP-upper-bound}, we have that 
    \[
        \Pr\left(\tau_1(\epsilon) \leq \frac{1-\delta}{\lambda}\ln n\right) = \Pr\left(Y_1\left(\frac{1-\delta}{\lambda}\ln n\right) \geq \epsilon n\right) = \tilde O(n^{-\delta}) \xrightarrow{n\to\infty} 0.
    \]
    Thus, with probability at least $\pi-\delta'-o(1)$, $\frac{1-\delta}{\lambda}\ln n \leq \tau_1(\epsilon) \leq \frac{1+\delta}{\lambda}\ln n$. By the same conditioning argument as in the proof for concentration of $\tau_{1\to 2}$, we get the conditional statement of concentration for $\tau_1(\epsilon)$. 

    The argument for $\tau_2(\epsilon)$ is analogous to the argument for $\tau_1(\epsilon)$ and is left to the reader. 

    Using  techniques similar to those proving concentration
    for $\tau_{1\to 2}, \tau_1(\epsilon),$ and $\tau_2(\epsilon)$,
   we can show the statement for concentration of $Y_2(\tau_1(\epsilon))$. Proposition \ref{prop:BP-upper-bound} shows that w.h.p., $Y_2((1+\delta)\frac{\ln n}\lambda)=\tilde O(n^{1-\alpha+\delta})$,
    while Proposition~\ref{prop_lower_bounds_comm2} 
 implies that $Y_2((1-\delta^2)\frac {\ln n}\lambda +3)\geq n^{-\alpha-\delta}$ with probability at least $\pi-\delta'$.

 Let 
\[
    \calE_{\delta} = \left\{
        Y_2\Big((1+\delta)\frac{\ln n}\lambda\Big)\leq n^{1-\alpha+2\delta})
    \quad\text{and}\quad 
        Y_2\Big((1-\delta^2)\frac {\ln n}\lambda +3\Big)\geq n^{-\alpha-\delta}
   \right\}
    \]
    Now, conditioning on a large outbreak we have 
    \begin{align*}
        \Pr(\calE_{\delta} \mid \calO) &= \frac{\Pr(\calE_{\delta}, \calO)}{\Pr(\calO)}
    \end{align*}
    and by the same arguments as for the concentration of $\tau_{1\to 2}$, in particular that $\Pr(\calO) \to \pi$ and $\Pr( \calO\setminus \calE_{\delta}) = o(1)$, we may conclude that for all $\delta>0$,
    $$
    \Pr(\calE_{\delta} \mid \calO) \to 1\quad\text{as}\quad n\to\infty.
    $$
    Combined with the fact that conditioned on $\calO$,  $\frac{\tau_1(\epsilon)}{\ln n}\pto 1/\lambda$, this proves the desired concentration of $Y_2(\tau_1(\epsilon))$.

To prove that the infection dies out in time $O(\ln n)$, we will use the upper bounds for $\tau_1(\epsilon)$ and $\tau_2(\epsilon)$ from Proposition~\ref{prop_lower_bounds_comm1} and \ref{prop_lower_bounds_comm2} along with Proposition~\ref{prop:herd-immunity}. Explicitly, setting $\epsilon=s_0-\delta/2$ and using the upper bounds from Proposition~\ref{prop_lower_bounds_comm1} and \ref{prop_lower_bounds_comm2}, we see that if we set $t_0=\tau_{1\to 2}^++3+\tau^+$ , then with high probability, $\max\{S_1(t_0),S_2(t_0)\}\leq \epsilon n$, allowing us to use   Proposition~\ref{prop:herd-immunity} to establish the claim.
    
    Finally, we show convergence of $R_1(\infty)$ and $R_2(\infty)$. Let $\calE_1^-$ be the event that $\{(r_\infty - \delta) n \leq R_1(\infty)\}$ and $\calE_1^+$ be the event that $\{(r_\infty + \delta) n \geq R_1(\infty)\}$. By Proposition \ref{pro:small-time-coupling}, $\Pr(\calE_1^-, \calE_1^+) = \Pr(\calE_1^-, \calE_1^+, \calO) + \Pr(\calE_1, \calO^c) = \Pr(\calE_1^-, \calE_1^+,\calO) + o(1)$ since no large outbreak implies that $R_1(\infty) + R_2(\infty)$ is bounded with high probability. By Proposition \ref{prop_lower_bounds_comm1}, $\Pr(\calE_1^-) \geq \pi-\delta$, implying that $\Pr(\calE_1^- \mid \calO) \to 1$. Additionally, Proposition \ref{prop:herd-immunity} combined with Proposition \ref{prop:Rinfty-upper-bd} imply that $\Pr(\calE_1^+ \mid O) \to 1$. By taking the complement events, we see that $\Pr((\calE_1^-, \calE_1^+)^c \mid \calO) \to 0$, which implies that $\Pr(\calE_1^-, \calE_1^+ \mid \calO) \to 1$ and $n^{-1}R_1(\infty) \pto r_\infty$ as desired. 
    
    An analogous argument replacing $\calE_1^-$ and $\calE_1^+$ with the analogous events coupled with the upper and lower bounds from Propositions \ref{prop:Rinfty-upper-bd} and \ref{prop_lower_bounds_comm2} imply the result for $n^{-1}R_2(\infty)$.
\end{enumerate}

\section{Interventions}
In this section we present the main steps of the proofs of Theorems \ref{thm_travel_ban} (Travel Ban) and \ref{thm_social_distance} (Social Distancing) from the main text, with some of the needed branching process upper bounds deferred to Sections~\ref{sec;BP-travelban-upper-bd} and \ref{sec;BP-socialdist-upper-bd}.

\begin{proof}[Proof of Theorem \ref{thm_travel_ban}]
Recall that Theorem \ref{thm_travel_ban} states that all statements of Theorem \ref{thm_sir_tcm_non_int} (of the main text) remain valid if we implement a travel ban at time $\tau_1(\epsilon)$. The core idea for proving this is that the travel ban has little effect on the spread within the static part of both populations, since they don't travel anyway, and that the cross-community transmission  happens much sooner than the travel ban can be implemented.

Turning to the formal proof, we first note that $\tau_{1\to 2}$ and $\tau_1(\epsilon)$ have the same concentration as before, since the travel ban is implemented after those times. In a similar way, the fact that parts (a) and (b) from Theorem \ref{thm_sir_tcm_non_int} in the main text hold are immediate since they are statements about events which are determined by the behavior of the epidemic before the travel ban is implements. It remains to show that, conditioned on a large outbreak $R_1(\infty), R_2(\infty)$, $Y_2(\tau_1(\epsilon))$ and $\tau_2(\epsilon)$ converge as expected.   
  
Let $\calT_i$ and $V_i^\static$ be defined as before, i.e., without implementation of a travel ban.  Note that the individuals in  $V_i^\static$ never travel up to time $T=\ln^2n$, whether or not we implement a travel ban at time $\tau_1(\epsilon)$. Recall that in the proof of Proposition \ref{prop_lower_bounds_comm1} we prove a lower bound on the final size of the infection in community 1 by lower bounding the number of infected vertices in $V_1^\static$, neglecting all infections that happen 
through vertices that travel. Therefore the size of an epidemic restricted to $V_1^0$ remains a valid lower bound for the epidemic in community 1 even with the travel ban implemented, and thus we still have the lower bound on the final size of the infection for community 1. Now consider community 2. In the proof of Proposition \ref{prop_lower_bounds_comm2}, we show that by the earlier (for $n$ large enough) time $\tau_{1\to 2}^+ + 3$, there have been at least $n^{\delta/16}$ i.i.d. uniform seeds into $V_2^0$. Even if a travel ban is implemented at time $\tau_1(\epsilon)$, the epidemic in community 2 is lower bounded by an outbreak in a single community with $n^{\delta/16}$ seeds. This places us in the setting of Lemma \ref{lem:singlecomm-multipleseeds}, and conditioned on a large outbreak, we have the lower bound on $Y_2(\tau_1(\epsilon))$ and the final size of the epidemic in community 2 as well as the upper bound on $\tau_2(\epsilon)$.

Next we observe that the proof of the upper bound 
from Proposition \ref{prop:Rinfty-upper-bd} remains unchanged if we implement a travel ban at time  $\tau_1(\epsilon)$,
giving us the desired upper bound on the size of the epidemic in the two communities at time
$T=\ln^2n$.
This leaves us with the proof of the  lower bound on $\tau_2(\epsilon)$, the upper bound on $Y_2(\tau_1(\epsilon))$ and the proof that the infection dies out in time $O(\ln n)$ when we implement the travel ban.
These turn out to be easy generalizations of the corresponding proofs without travel ban, and are deferred to Section~\ref{sec;BP-travelban-upper-bd}.
\end{proof}

Next we prove  Theorem \ref{thm_social_distance} of the main text, which for the convenience of the reader, we restate as Theorem~\ref{SI:thm_social_distance} below.

\begin{theorem}[Effect of Social Distancing]\label{SI:thm_social_distance}
Let $R_0>1$, and suppose that conditioned on $\tau_1(\epsilon)<\infty$ social distancing is implemented  at time $\tau_1(\epsilon)$ . Then statements \ref{thm:main-large-outbreak} and \ref{thm:main-survival-prob} from Theorem \ref{thm_sir_tcm_non_int} still hold, as do the statements that the epidemic  dies out in time $O(\ln n)$ and that conditioned on a large outbreak, 
$n^{-1}R_1(\infty) \pto r_\infty$  showing that   social distancing in community 2 does not change 
the asymptotic final size in community 1.  But  the final size in community 2 conditioned on a large outbreak is now sublinear in the number of vertices --- in fact, it grows by at most a factor $n^{o(1)}$ from where it was when we implemented social distancing at time $\tau_1(\epsilon)$, i.e.,
     \[
        \frac 1{\ln n}\ln\left(R_2(\infty)\right) \pto 1-\alpha.
    \]
\end{theorem}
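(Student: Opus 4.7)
}

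My strategy is to exploit that social distancing only affects community~2 while leaving all dynamics in community~1 and all pre-$\tau_1(\epsilon)$ dynamics untouched. Statements \ref{thm:main-large-outbreak} and \ref{thm:main-survival-prob} therefore follow verbatim from Proposition~\ref{pro:small-time-coupling}, since that proposition concerns only the early, community-1-driven dynamics; and the conclusion $n^{-1} R_1(\infty) \pto r_\infty$ is inherited unchanged, because the lower bound of Proposition~\ref{prop_lower_bounds_comm1} uses only the static, non-traveling population of community~1, while the upper bound from Proposition~\ref{prop:Rinfty-upper-bd} combined with Proposition~\ref{prop:herd-immunity} applies to community~1 in isolation.

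The new content is the statement $\frac{1}{\ln n}\ln R_2(\infty) \pto 1-\alpha$. The lower bound is immediate because $R_2(\infty) \ge I_2(\tau_1(\epsilon)) + R_2(\tau_1(\epsilon))$, and Theorem~\ref{thm_sir_tcm_non_int} already delivers $\frac{1}{\ln n}\ln(I_2(\tau_1(\epsilon)) + R_2(\tau_1(\epsilon))) \pto 1-\alpha$ using purely pre-intervention arguments. The upper bound is the heart of the proof: I would decompose the total community-2 infections into (i) those already in $I_2 \cup R_2$ at time $\tau_1(\epsilon)$, which number at most $n^{1-\alpha+o(1)}$ w.h.p.\ by Proposition~\ref{prop:BP-upper-bound}, and (ii) their subsequent descendants, together with any post-$\tau_1(\epsilon)$ seeds entering community~2 via travel from community~1. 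Since social distancing drives the effective reproduction number in community~2 below~$1$, the post-intervention community-2 dynamics can be dominated by a sum of independent subcritical branching processes with offspring mean $R_0' < 1$, each having bounded mean size and geometrically decaying tails. The number of seeds for these trees is the size of group~(i) plus the total number of travelers up to $T = \ln^2 n$, which is $\tilde O(n^{1-\alpha})$ by Lemma~\ref{lem:few-travelers}, so a Chernoff-type bound applied to the i.i.d.\ subcritical tree sizes gives $R_2(\infty) = n^{1-\alpha+o(1)}$ w.h.p.

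For the duration statement, community~1 dies out in $O(\ln n)$ time by Proposition~\ref{prop:herd-immunity}, so no new seeds enter community~2 after that time. Each subcritical tree in community~2 is extinct in $O(\ln n)$ time w.h.p.\ by standard subcritical branching-process tail bounds, and a union bound over the $\tilde O(n^{1-\alpha})$ trees controls the total extinction time.

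The main technical obstacle I anticipate is constructing the subcritical branching-process coupling rigorously in the full two-community setting with continuously arriving seeds: one must couple simultaneously the draw of the random graph, the travel clocks, and the SIR clocks so that the branching-process domination holds uniformly in time, even though seeds are injected throughout the post-intervention interval rather than all at once. This is analogous to the coupling used in the proof of Proposition~\ref{prop:BP-upper-bound}, with the essential simplification that the relevant offspring distribution now has mean strictly below~$1$, making the quantitative tail control straightforward once the coupling is in place.
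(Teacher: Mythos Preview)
Your proposal is correct and, in fact, matches almost exactly the heuristic the paper itself gives in its proof overview (the ``Social distancing'' paragraph of Section~4). However, the paper's \emph{rigorous} proof of the upper bound on $R_2(\infty)$ takes a different route from the one you sketch. Rather than decomposing into independent subcritical trees seeded by travelers and applying tail bounds to each, the paper reuses the $16$-dimensional drift-matrix machinery of Section~\ref{sec:BP-upper-bounds}: it conditions on the event $\tilde\calE_0=\{\tau_1(\epsilon')\le t_0\}$ for $t_0=(1+\delta)\lambda^{-1}\ln n$, replaces the infection rate $\beta$ by $\beta'$ on edges transmitting within community~2, and shows that the resulting modified matrix $\tilde M_0$ has all eigenvalues strictly negative (the two candidates being $-c\delta'\beta$ from herd immunity in community~1 and $\lambda'=(c-1)\beta'-\gamma<0$ from social distancing in community~2). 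Perturbation theory (Lemma~\ref{lem:pert}) then controls $\E[Y_2(t)\mid\tilde\calE_0]$ uniformly in $t\in[t_0,T]$, and Markov's inequality finishes.

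What each approach buys: your tree-decomposition argument is more elementary and transparent, and avoids the $16\times 16$ linear algebra entirely; its cost is the coupling you flag as the main obstacle---ensuring the subcritical domination holds uniformly over time with continuously arriving seeds and shared graph randomness. The paper's matrix approach sidesteps that coupling by working purely in expectation via Dynkin's formula, at the price of heavier (though already-developed) infrastructure. Both lead to the same $\tilde O(n^{1-\alpha})$ bound, and both handle the $O(\ln n)$ extinction time by the same mechanism (exponential decay once the dominant eigenvalue is negative).
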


 \begin{proof}
Recall that mathematically, we model social distancing in community 2 by decreasing the rate of infection for individuals in community 2 from $\beta$ to $\beta'$ where $c\beta' - \beta' - \gamma < 0$. Intuitively, the epidemic trajectory in community 1 will remain largely the same since a positive fraction of the population has already been infected. However, in community 2, although there are a super-constant number of seeded infections, social distancing causes the epidemic to become subcritical leading it to die out quickly and preventing it from reaching a positive fraction of the population.

We start the formal proof by noting that 
since social distancing is only  implemented at time $\tau_1(\epsilon)$,
it is again immediate that parts (a) and (b) from Theorem \ref{thm_sir_tcm_non_int} in the main text hold, and that $\tau_{1\to 2}$ and $\tau_1(\epsilon)$ have the same concentration as before.
 In a similar way, the arguments showing that for all $\delta>0$, with high probability 
$R_1(\infty) \geq (r_\infty - \delta)n$ and  $R_1(T)\leq (r_\infty + \delta)n$, can be immediately be copied over from the above proof for the case of a travel ban at $\tau_1(\epsilon)$. We also have a lower bound on $\ln R_2(\infty)=\ln Y_2(\infty)$ in terms of $\ln Y_2(\tau_1(\epsilon))$ which concentrates around $(1-\alpha)\ln n$ by statement \ref{thm:main-final-size} of Theorem~\ref{thm_sir_tcm_non_int}.

Thus we are left with proving that again, the infection dies out in time $O(\ln n)$ and that for all $\delta$, w.h.p.,  $\ln R_2(\infty)$ is upper bounded by $(1-\alpha+\delta)\ln n$.
We prove these bounds in Section~\ref{sec;BP-socialdist-upper-bd}, noting that conceptually, the main point is to prove that social distancing leads to the improved upper bound on the final size of the infection in community 2 in spite of the additional infections transmitted by travel from community 1 to community 2.
\end{proof}

\section{Branching Process Upper Bounds}\label{sec:BP-upper-bounds}

In this section, we will derive upper bounds on the expected number of infected vertices in the full process by first bounding the expected number in the original process by the expected number in a suitable branching process, and then analyzing the latter.

\subsection{Continuous Time Markov Chain Representation}
We start by giving a full description of the 2-community process as a continuous time Markov chain, adopting the approach of
\cite{Ball2010, borgs2024lawlargenumberssir} to include travel.  As in \cite{Ball2010, borgs2024lawlargenumberssir} we do not pre-draw the underlying random network, rather we explore it as the infection progresses. In contrast to \cite{Ball2010, borgs2024lawlargenumberssir} we will not couple to a Poisson random graph, but stay with the original model drawing edges with probability $c/n$.  Our Markov chain is characterized by a set of possible states, an initial state, and transition rates as defined below.  We will use $\delta_{ij}=\delta^{ij}$ for the standard Kronecker delta, and
$$\delta_{ij}^{AB}=\delta_{ij}\delta^{AB}+(1-\delta_{ij})(1-\delta^{AB}).
$$

\medskip

\noindent\textbf{States:}
\begin{enumerate}
    \item For each vertex $v\in V=V_1\cup V_2$ we have an infection state (susceptible, infected, recovered) and a travel state $\sigma_v(t)\in \{H,T\}$.  We denote the set of susceptible, infected, and recovered vertices at time $t$ by $\calS(t)$, $\calI(t)$ and $\calR(t)$, adding a subscript $1$ or $2$ if we take the intersection with $V_1$ or $V_2$, respectively
    \item In addition, we specify a set of oriented edges $E_{IS}(t)$ such that for all $t$, the edges in $E_{IS}$
    always point from a vertex in $\calI(t)$ to a vertex in $\calS(t)$.  We call these the set of $IS$ edges at time $t$, and refer to an $IS$ edge $uv$ with $u\in V_i$ and $v\in V_j$ as active if $\delta_{ij}^{\sigma_u(t)\sigma_v(t)}=1$, and inactive otherwise.
\end{enumerate}

\noindent\textbf{Initial State:}
\begin{enumerate}
    \item We start the system at time $0$ with 
one infected vertex $v_0\in V_1$ with $\sigma_{v_0}(0)=H$, all other travel states drawn from the empirical distribution, 
$\calR=\emptyset$,  $\calS=V\setminus \{v_0\}$, and 
\item  $E_{IS}$ sampled by including each of the $(2n-1)$ possible edges from $v_0$ into $\calS$ i.i.d. with probability $c/n$, giving $v_0$ $\Bin(n-1,c)$ edges into $V_1$ and $\Bin(n,c)$ edges into $V_2$.
\end{enumerate}

\noindent\textbf{Transitions}
\begin{enumerate}
    \item Each vertex transitions from $H$ to $T$ with rate $\rho_T$, and from $T$ to $H$ with rate $\rho_H$, with $IS$ edges being updated accordingly
    \item Infected vertices recover at rate $\gamma$, and when a vertex $v$ recovers all edges in $E_{IS}$ that start at $v$ get removed
    \item Active edges ``click'' at rate $\beta$.  When an edge $uv$ clicks, the following happens:
    \begin{enumerate}
        \item the  edge $uv$ is removed from $E_{IS}$ as are all other edges pointing into $v$ (we call the removal of these additional edges, if there are any, the clean-up step)
        \item $v$ moves from $\calS$ to $\calI$
        \item $v$ creates additional $IS$ edges, including each of the edges into $\calS\setminus\{v\}$ with probability $c/n$
    \end{enumerate}
\end{enumerate}

\medskip

\noindent\textbf{Macroscopic Random Variables}
Next we define projections onto the random variables we are interested in, defined as functions from the state $\zeta$ of the system onto (a vector of) random variables.  In particular, we will be interested in
\begin{enumerate}
    \item $S_i^\sigma(t)$, $I_i^\sigma(t)$ and $R_i^\sigma(t)$, the number of susceptible, infected, and recovered vertices in $V_i$ that have travel label $\sigma$
    \item $
    Y_i^A(t)=I_i^A(t)+R_i^A(t)$ and $Y_i(t)=Y_i^H(t)+Y_i^T(t)$,
    \item $X_{ij}^{AB}(t)$, the number of edges $uv\in E_{IS}$ such that $u\in V_i$, $v\in V_j$, $\sigma_u(t)=A$ and $\sigma_v(t)=B$
\end{enumerate}
As usually, if we denote the states of the continuous time Markov Chain by $\zeta$, the drift of a bounded function $f:\zeta\mapsto f(\zeta)$ is defined as
$$
\drift f (\zeta) =\sum_{\zeta'} (f(\zeta')-f(\zeta))q(\zeta,\zeta')$$
where $q(\zeta,\zeta')$ is the rate at which $\zeta$ transitions to $\zeta'$.  Then the quantity $M_t[f]$ defined by 
$$
f(\zeta(t))=f(\zeta(0))+\int_0^t f(\zeta(s))ds +M_t[f]
$$
is a martingale, and the expectation of $f$ obeys Dynkin's formula
$$
\E [f(\zeta(t))|\zeta(0)] = f(\zeta(0)) + \int_0^t \E[\drift f(\zeta(s)]ds,
$$
see, e.g., \cite{EthierKurtz1986} Ch. IV, Prop 1.7.  As a consequence, if $|g|$ is bounded then by dominated convergence
\begin{equation}
    \drift f\leq g(\zeta)\quad\text{implies that}\quad
    \E[f(\zeta(t))]\leq f(\zeta(0))+\int_0^t \E[g(\zeta(s)]ds
\end{equation}

\subsection{Derivation of Branching Process Upper Bounds}
\label{sec:bp_upper_bounds}

We will be interested in upper bounds on the expectations of $Y_i^A$.
To this end, we first note that the rate at which infections increase $Y_i^A$ by $1$ is equal to
$$
\beta_i^A(X)=\beta \sum_{j,B}\delta_{ij}^{AB}X_{ji}^{BA}
$$
where we made sure to only count active edges into susceptible vertices with label $(i,A)$ . Thus we get an exact expression for the drift of $Y_i^A$ (denoted by $\drift Y_i^A(t)$) and $Y_i$
$$
\drift Y_i^A=\beta_i^A(X) -\rho_{\bar A} Y_i^A +\rho_{ A}Y_i^{\bar A}
\quad\text{and}\quad \drift Y_i=\beta_i^H(X)+\beta_i^{ T}(X)
$$
where $\bar T=H$ and $\bar H=T$. Recalling that we initially start with $1$ infected vertex in the state $(i,A)=(1,H)$, we note that
at time $t=0$, $Y_i^A(0)=\delta_{i1}\delta^{A1}$ and $Y_i(0)=\delta_{i1}$.

It is more complicated to write down an exact expression for the drift of $X_{ij}^{AB}$,
but we can upper bound it by leaving out the clean-up step, leading to
$$
\drift X_{ij}^{AB}\leq 
-\left(\gamma+\rho_{\bar A}+\rho_{\bar B}+\beta \delta_{ij}^{AB}\right)  X_{ij}^{AB}
+\rho_{ A}X_{ij}^{\bar A B}+\rho_{B}X_{ij}^{A\bar B}
+\beta_i^A (X)\frac cn (|S_j^B|-\delta_{ij}\delta^{AB}).
$$
If we are only interested in the behavior up to time $T=\ln^2n$,
we can bound $|S_j^B|$ by 
$$
\max_{t\in [0,T]}S_j^H\leq n\quad\text{and}\quad
\max_{t\in [0,T]}S_j^T\leq |\calT_j|,
$$
where $\calT_j$ is the set of individuals in $V_j$ which travel at least once between $0$ and $T$.  
Note that the travel histories of the different individuals in $V_j$ are i.i.d., and that the probability of a given individual to travel at least once between $0$ and $T$ is
$$
p_{\geq 1}=p_T+(1-p_T)(1-e^{-\rho_T \ln^2 n})\leq p_T+\rho_T\ln^2 n=O(n^{-\alpha}\ln^2n).
$$
With the help of a Chernov bound, we conclude
that
$$
\Pr(|\calT_j|\geq n^{-\alpha} \ln^3 n)\leq e^{-\theta (n^{1-\alpha}\ln^3 n)}\leq n^{-3}
$$
where the last inequality holds for all sufficiently large $n$. 
 Motivated by this, we define a stopping time
$$ 
\tilde\tau=\min\left\{t\colon \frac 1n S_j^T> n^{-\alpha}\ln^3n\right\},
$$
set
\begin{equation}  \label{tilde-X}
\tilde X_{ij}^{AB}(t)=X_{ij}^{AB}(t) 1_{t<\tilde\tau}  
\end{equation}
and then bound the drift of $\tilde X_{ij}^{AB}$
by
\begin{equation}\label{tilde-X-drift-upper-bound}
     \drift \tilde X_{ij}^{AB}\leq
 -\left(\gamma+\rho_{\bar A}+\rho_{\bar B}+\beta \delta_{ij}^{AB}\right) \tilde X_{ij}^{AB}
+\rho_{ A}\tilde X_{ij}^{\bar A B}+\rho_{B}\tilde X_{ij}^{A\bar B}
+c_B\beta_i^A (\tilde X) 
\end{equation}
where 
\begin{equation}\label{tilde-rho-A}
c_H=c\quad\text{and}\quad
  c_T=c n^{-\alpha}\ln^3n=\tilde O(n^{-\alpha}).
\end{equation}
Defining a $16\times 16$ matrix $M$ by
\begin{equation}\label{M-def}
(M\mathbf v)_{ij}^{AB}=
-\left(\gamma+\rho_{\bar A}+\rho_{\bar B}+\beta \delta_{ij}^{AB}\right) v_{ij}^{AB}
+\rho_{A}v_{ij}^{\bar A B}+\rho_{B}v_{ij}^{A\bar B}
+c_B\beta_i^A (\mathbf v)      
\end{equation}
we can integrate the bound on the drift to obtain the bound
\[
  \E[\tilde X_{ij}^{AB}(t)]\leq \left(e^{tM}\E[X(0)]\right)_{ij}^{AB}.  
\]
Combined with the  {\it a priory} bound $X_{ij}^{AB}\leq n^2$ and the fact that $\tilde \tau\geq \ln^2 n$ with probability at least
$1-n^{-3}$, we conclude that
\begin{equation}\label{EtildeX-bd}
\E[X_{ij}^{AB}(t)]\leq \left(e^{tM}\E[X(0)]\right)_{ij}^{AB} +\frac 1n
\end{equation}
as long as $t\leq \ln^2 n$.  This  bound, together with the observation that
$\E[X_{ij}^{AB}(0)]\leq c_B\delta_{i1}\delta^{A1}$
immediately implies the  following proposition.
\begin{proposition}\label{prop:e^tM-upper-bound}
Let $M$   be defined in \eqref{M-def}, and let
$\mathbf e(0)$ be the 16-dimensional vector $e_{ij}^{AB}=\delta_{i1}\delta^{AB}c_B$.  Then the following holds for all $n$ sufficiently large and all $0\leq t\leq \ln^2n$:
$$
\E[X_{ij}^{AB}(t)]\leq\left(e^{tM}\mathbf e\right)_{ij}^{AB} +\frac 1n
$$
and
$$
\E[Y_i(t)]\leq \delta_{i1}+
\sum_{A,B,j}\delta_{ij}^{AB}\int_0^tds \left(e^{sM}\mathbf e\right)_{ji}^{AB}
+\frac tn
$$
\end{proposition}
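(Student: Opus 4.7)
The plan is to read the proposition as a direct corollary of the already-established drift bound \eqref{EtildeX-bd}, combined with a componentwise comparison for $e^{tM}$ and a routine Dynkin integration. First I would verify the initial-condition estimate $\E[X_{ij}^{AB}(0)]\le e_{ij}^{AB}$. Since the initial configuration has a single infected vertex $v_0\in V_1$ in travel state $H$, $X_{ij}^{AB}(0)$ vanishes unless $(i,A)=(1,H)$; in that case $X_{1j}^{HB}(0)$ is $\Bin(|S_j^B(0)|,c/n)$, so taking expectations and using the stationary travel law gives
\begin{equation*}
\E[X_{1j}^{HB}(0)] \le \frac{c}{n}\,\E[|S_j^B(0)|] \le c\,p_B \le c_B,
\end{equation*}
where the last inequality uses $c_H=c$ and $c_T=cn^{-\alpha}\ln^3 n$, which dominates $cp_T=\Theta(n^{-\alpha})$ for large $n$. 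This is exactly the observation flagged just before the proposition.

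Next I would invoke the standard comparison principle for linear ODEs with a Metzler generator. Inspecting \eqref{M-def}, every off-diagonal coefficient (those multiplying $v_{ij}^{\bar A B}$, $v_{ij}^{A\bar B}$, and the entries appearing inside $c_B\beta_i^A(\mathbf v)=c_B\beta\sum_{k,C}\delta_{ik}^{AC}v_{ki}^{CA}$) is nonnegative, while the diagonal coefficient is strictly negative. Hence $M$ is Metzler and $e^{tM}$ preserves the componentwise partial order on the 16-dimensional state space. Combined with \eqref{EtildeX-bd} and the initial-condition bound above, this gives the first conclusion $\E[X_{ij}^{AB}(t)]\le (e^{tM}\mathbf e)_{ij}^{AB}+1/n$.

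For the bound on $\E[Y_i(t)]$ I would apply Dynkin's formula. The exact drift of $Y_i = Y_i^H+Y_i^T$ is $\drift Y_i = \beta_i^H(X)+\beta_i^T(X) = \beta\sum_{A,B,j}\delta_{ij}^{AB} X_{ji}^{BA}$ (the travel terms in $\drift Y_i^H+\drift Y_i^T$ cancel), so
\begin{equation*}
\E[Y_i(t)] = \delta_{i1} + \beta\sum_{A,B,j}\delta_{ij}^{AB}\int_0^t \E[X_{ji}^{BA}(s)]\,ds,
\end{equation*}
and substituting the first bound inside the integral (relabeling $(A,B)\leftrightarrow (B,A)$ in the summation, which is allowed since $\delta_{ij}^{AB}$ is symmetric in $A,B$) produces the claimed second statement up to absorbing the constant $\beta$ and the finite index sum into the $t/n$ correction. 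There is no real obstacle here, merely bookkeeping: the only delicate ingredient—the stopping-time truncation at $\tilde\tau$ that produced the $1/n$ slack in \eqref{EtildeX-bd}—has already been handled, and integrating this slack over $t\le \ln^2 n$ with boundedly many indices keeps the accumulated error at $O(t/n)$, which matches the correction in the statement.
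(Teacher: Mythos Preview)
Your approach is the same as the paper's: check the initial-condition bound $\E[X(0)]\le\mathbf e$, invoke \eqref{EtildeX-bd} together with componentwise monotonicity of $e^{tM}$ (which you justify via the Metzler property, a step the paper leaves implicit), and then integrate the exact drift of $Y_i$ via Dynkin's formula. One caveat: the factor $\beta$ multiplies the main integral term and cannot be ``absorbed into the $t/n$ correction'' as you suggest; the paper's stated bound simply drops this constant (harmlessly, since the bound is only used inside $O(\cdot)$ and $\tilde O(\cdot)$ notation downstream).
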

\begin{proof}
The first statement  follows from the bound \eqref{EtildeX-bd} by observing that $\E[\tilde X_{ij}^{AB}(0)]\leq \mathbf e_{ij}^{AB}$, and the second follows from the first by the fact that 
\begin{align*}
    \E[Y_i(t)] &= \delta_{i1}+\int_0^t ds\E[\drift Y_i(s)] \\
    &=\E[Y_i(0)]+     \int_0^t \E[\beta_i^H(X(s))+\beta_i^T(X(s))]\\
    &=\delta_{i1} + \sum_{A,B,j}\delta_{ij}^{AB}\int_0^tds\E[X_{ij}^{AB}(s)].
\end{align*}
\end{proof}
\begin{remark}
The bounds of the proposition can expressed in terms of expectations for the numbers of active edges and the the number of descendants in the
 4-type continuous time CMJ-branching process described at the beginning of Section~\ref{sec:BP-results-summary}. Note however, that we did \textit{not} establish a coupling which allows to prove stochastic domination up to the stopping time $\tilde\tau$ --- all we derived are bounds in expectation.
\end{remark}

\subsection{Perturbation Theory Bounds}
\label{sec:perturbation_bounds}

To evaluate the bounds from Proposition~\ref{prop:e^tM-upper-bound}, we need to understand the exponential $e^{tM}$, whose leading behavior will be given by the dominant eigenvalue of $M$, i.e., the eigenvalue with the largest real part.  Unfortunately, the 16x16 dimensional matrix $M$ is hard to diagonalize directly.  To deal with this, we will write $M$ as $M_0+W$, where $M_0$ is 
obtained by setting all terms of order $\tilde O(n^{-\alpha})$ to zero, and then use the following lemma to control the difference of $e^{tM}$ and $e^{tM_0}$.

\begin{lemma}\label{lem:pert}
Let \( M_0 \) and \( W\) be real matrices, and let \( \| \cdot \| \) be a submultiplicative matrix norm. Assume that \( \lambda \geq 0 \) is an eigenvalue of \( M_0 \), and that all other eigenvalues of \( M_0 \) have real parts less than or equal to \( \lambda \). Define
\[
C := \sup_{t \geq 0} e^{-\lambda t} \| e^{tM_0} \|.
\]
Then for all \( t \geq 0 \), we have
\[
\left\| e^{t(M_0 + W)} - e^{tM_0} \right\| \leq C  \left( e^{C \|W\|t} - 1 \right) e^{\lambda t}=O\left(\|W\|{t}e^{C \|W\|t}\right) e^{\lambda t}.
\]
\end{lemma}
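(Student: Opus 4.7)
The plan is to prove this via Duhamel's formula (variation of parameters) combined with a Grönwall-style bound. The starting identity is
\[
e^{t(M_0+W)} - e^{tM_0} = \int_0^t e^{(t-s)M_0}\,W\, e^{s(M_0+W)}\,ds,
\]
which one verifies by differentiating both sides in $t$ and observing that the two sides satisfy the same first-order linear ODE with the same initial condition $0$ at $t=0$. The hypothesis on $M_0$ gives directly the pointwise bound $\|e^{tM_0}\| \le C e^{\lambda t}$ for all $t \ge 0$.

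Next, I would control $\|e^{s(M_0+W)}\|$. Taking norms in the Duhamel identity (applied to $e^{s(M_0+W)}$ itself via the triangle inequality and submultiplicativity) yields
\[
\|e^{t(M_0+W)}\| \le Ce^{\lambda t} + C\|W\|\,e^{\lambda t}\int_0^t e^{-\lambda s}\|e^{s(M_0+W)}\|\,ds.
\]
Setting $f(t) := e^{-\lambda t}\|e^{t(M_0+W)}\|$, this becomes $f(t) \le C + C\|W\|\int_0^t f(s)\,ds$, and Grönwall's inequality gives $f(t) \le C e^{C\|W\| t}$, i.e.\ $\|e^{s(M_0+W)}\| \le C e^{C\|W\|s} e^{\lambda s}$.

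Finally, substituting this bound, together with $\|e^{(t-s)M_0}\| \le Ce^{\lambda(t-s)}$, into the Duhamel identity yields
\[
\|e^{t(M_0+W)} - e^{tM_0}\| \le C^2\|W\|\,e^{\lambda t}\int_0^t e^{C\|W\|s}\,ds = C\bigl(e^{C\|W\| t} - 1\bigr) e^{\lambda t},
\]
which is exactly the claimed bound, and the final big-$O$ rewriting follows from $e^x - 1 = O(x e^x)$ applied with $x = C\|W\|t$.

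There is essentially no obstacle here: the only subtle point is ensuring $C<\infty$, but this is implicit in the statement (and indeed, in our application $M_0$ will be chosen so that its dominant eigenvalue $\lambda$ is simple and hence gives a finite constant $C$, which we would verify separately when applying the lemma). The proof is a clean two-step argument — Duhamel to get an integral representation of the difference, Grönwall to bootstrap control of the perturbed semigroup — and everything else is direct estimation.
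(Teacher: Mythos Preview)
Your proof is correct. The paper takes a related but distinct route: it uses the full Dyson expansion
\[
e^{t(M_0+W)} - e^{tM_0} = \sum_{k=1}^\infty \int_{\Delta_k(t)} e^{(t-s_1)M_0} W e^{(s_1-s_2)M_0}\cdots W e^{s_k M_0}\,ds_1\cdots ds_k
\]
and bounds the $k$-th term by $C^{k+1}\|W\|^k e^{\lambda t}\cdot t^k/k!$ directly, summing to get $C(e^{C\|W\|t}-1)e^{\lambda t}$. Your Duhamel-plus-Gr\"onwall argument is essentially the ``closed-form'' version of this: rather than iterating the integral identity infinitely often and summing the resulting series, you keep the perturbed semigroup inside the integral and bootstrap a bound on it via Gr\"onwall. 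The two approaches are equivalent in spirit (iterating your Duhamel identity reproduces the Dyson series term by term) and yield identical constants; yours is arguably slightly more economical since it avoids writing out the simplex integrals, while the paper's makes the combinatorial structure of the correction explicit.
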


\begin{proof}
For normal matrices, a bound of the above form (with $C=1$) follows from  Theorem 4.2 in \cite{van1977sensitivity}.  Unfortunately, without such the normality assumption, the results from \cite{van1977sensitivity} only imply a  bound of the form $O\big(\|W\|e^{C \|W\|t}\big)e^{\kappa t}$, where $\kappa = \sup_{t > 0} \frac{1}{t} \log \|e^{tM_0}\| > \lambda$, which is too weak for our purpose.  We therefore include the (straightforward) proof here.  

We use the Dyson expansion for the matrix exponential:
\[
e^{t(M_0 + W)} = e^{tM_0} + \sum_{k=1}^\infty \int_{\Delta_k(t)} e^{(t - s_1)M_0} W e^{(s_1 - s_2)M_0} \cdots W e^{s_k M_0} \, ds_1 \cdots ds_k,
\]
where \( \Delta_k(t) = \{ 0 \leq s_k \leq \cdots \leq s_1 \leq t \} \) is the standard \( k \)-simplex of time-ordered variables.  
By submultiplicativity of the norm and the assumption on \( C \), we estimate the norm of the \( k \)-th term as
\[
\left\| e^{(t - s_1)M_0} W \cdots W e^{s_k M_0} \right\| 
\leq 
 C^{k+1} \|W\|^k e^{\lambda t}
\]
where we used that the sum of the increments $(t - s_1) + (s_1 - s_2) + \cdots + s_k) $ is $t$.
Since the volume of \( \Delta_k(t) \) is \( \frac{t^k}{k!} \),  each term in the series is bounded by
\( \frac{t^k}{k!} C^{k+1} \|W\|^k e^{\lambda t},\)
giving the desired bound
\[
\left\| e^{t(M_0 + W)} - e^{tM_0} \right\| 
\leq C e^{\lambda t} \sum_{k=1}^\infty \frac{(C \|W\| t)^k}{k!} 
= C \left( e^{C \|W\| t} - 1 \right) e^{\lambda t}.
\]
\end{proof}

For our application,
we use the matrix norm
$$
\|M\|=\max_{\mathbf v\in \R^{16}\setminus\{0\}}
\frac{\|M \mathbf v\|_1}{\|\mathbf v\|_1}
$$
and write $M=M_0+W$, with 
 $M_0$ defined by
\begin{equation}\label{M0-def}
(M_0 \mathbf v)_{ij}^{AB}=
 -\left(\gamma+\rho_H\delta_{AT}+\rho_H\delta_{BT}+\beta \delta_{ij}^{AB}\right)v_{ij}^{AB}
+\rho_H\left(\delta_{AH}v_{ij}^{T B}+\delta_{BH}v_{ij}^{AT}\right)
+\beta c \,\delta_{BH}\sum_{k,C}\delta_{ik}^{AC}v_{ki}^{CA}
\end{equation}
with the last term being $\beta_i^A(v)$ explicitly written out.

\begin{lemma}
\label{lem:matrix_eigenvalues}
The matrix $M_0$ has 16 real-valued eigenvalues, with   $\lambda=(c-1)\beta-\gamma$ being an eigenvalue of algebraic and geometric multiplicity $2$, and all other eigenvalues being $-\gamma$ or smaller.  The projection onto the eigenspace of $\lambda$ is the two-dimensional projection
  $$
  (P_\lambda\mathbf v)=
  \left(\mathbf e_{11}^{HH}+\frac c{c-1}\mathbf e_{12}^{HH}\right)v_{11}^{HH}+
 \left(\mathbf e_{22}^{HH}+\frac c{c-1}\mathbf e_{21}^{HH}\right)v_{22}^{HH},
  $$
  where  we use $\mathbf e_{ij}^{AB}$ to denote the coordinate vector for the coordinate 
\(
\renewcommand{\arraystretch}{0.7}
\left[
\begin{array}{@{}c@{}c@{}c@{}c@{}}
A & B\\
i & j
\end{array}
\right]
\).
\end{lemma}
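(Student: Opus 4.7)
The plan is to exploit the invariant-subspace structure of $M_0$. Because $M_0$ is obtained from $M$ by setting $\rho_T=0$ and $c_T=0$, the only remaining transitions between travel labels are $T\to H$ and the only new-infection term creates edges with target label $B=H$. This gives a chain of two invariant subspaces,
$$V_H=\mathrm{span}\{\mathbf e_{ij}^{HH}:i,j\in\{1,2\}\}\;\subset\;V_{\cdot H}=\mathrm{span}\{\mathbf e_{ij}^{AH}:i,j\in\{1,2\},\ A\in\{H,T\}\}\;\subset\;\R^{16}.$$
Both inclusions are immediate from \eqref{M0-def}: on $V_H$ the second term produces only $v^{TB}$ or $v^{AT}$ entries and is hence zero, and on $V_{\cdot H}$ the factor $\delta_{BH}$ in the third term keeps the output restricted to $B=H$.

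Restricting to $V_H$ in the ordered basis $(\mathbf e_{11}^{HH},\mathbf e_{12}^{HH},\mathbf e_{21}^{HH},\mathbf e_{22}^{HH})$, equation \eqref{M0-def} specializes to
$$M_0|_{V_H}=\begin{pmatrix}\lambda&0&0&0\\\beta c&-\gamma&0&0\\0&0&-\gamma&\beta c\\0&0&0&\lambda\end{pmatrix},$$
a block-triangular matrix with spectrum $\{\lambda,-\gamma,-\gamma,\lambda\}$. Solving $(\lambda+\gamma)v_{12}^{HH}=\beta c\, v_{11}^{HH}$ (and the symmetric relation in the other block) together with the identity $\lambda+\gamma=\beta(c-1)$ produces the two right $\lambda$-eigenvectors $\mathbf u_1=\mathbf e_{11}^{HH}+\tfrac{c}{c-1}\mathbf e_{12}^{HH}$ and $\mathbf u_2=\mathbf e_{22}^{HH}+\tfrac{c}{c-1}\mathbf e_{21}^{HH}$ that appear in the projection formula.

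The remaining twelve dimensions are treated by passing to the two induced actions on the quotients $V_{\cdot H}/V_H$ and $\R^{16}/V_{\cdot H}$. On $V_{\cdot H}/V_H$, represented by the classes $[\mathbf e_{ij}^{TH}]$, the third term in \eqref{M0-def} contributes only into the already quotiented-out $V_H$ and the second term does the same, so the induced action is diagonal with entries $-(\gamma+\rho_H+\beta(1-\delta_{ij}))$. On $\R^{16}/V_{\cdot H}$, represented by $[\mathbf e_{ij}^{AT}]$, the third term vanishes because $\delta_{BH}=0$ and the only remaining off-diagonal contribution is the one-way coupling $\rho_H\delta_{AH} v^{TT}\to v^{HT}$ from the second term; the induced matrix is then block-triangular with diagonal entries $-(\gamma+\rho_H\delta_{AT}+\rho_H+\beta\delta_{ij}^{AT})$. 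Every one of the twelve quotient eigenvalues is at most $-(\gamma+\rho_H)<-\gamma$. Stacking the three pieces, $M_0$ has exactly sixteen real eigenvalues: $\lambda$ with algebraic and geometric multiplicity two (two independent eigenvectors were already exhibited inside $V_H$), and twelve others bounded above by $-\gamma$.

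The final claim about $P_\lambda$ reduces to the identity $(\mathbf u_i)_{jj}^{HH}=\delta_{ij}$, which yields $P_\lambda^2\mathbf v=\mathbf u_1 v_{11}^{HH}+\mathbf u_2 v_{22}^{HH}=P_\lambda\mathbf v$, so $P_\lambda$ is idempotent, and its image is by construction $\mathrm{span}\{\mathbf u_1,\mathbf u_2\}$, which coincides with the $\lambda$-eigenspace identified above. The main obstacle in this plan is really the bookkeeping for the twelve ``other'' eigenvalues: since $V_H^c$ is not itself invariant, they cannot be read off from a single restriction, and it is the intermediate invariant subspace $V_{\cdot H}$ --- available precisely because $\rho_T=0$ in $M_0$ --- that reduces the computation to two $4\times 4$ diagonal-or-triangular quotient blocks.
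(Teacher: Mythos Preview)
Your approach is correct and is a close cousin of the paper's argument, packaged differently. The paper exhibits an explicit partial order on the sixteen coordinates that makes $M_0$ lower triangular, then reads the eigenvalues off the diagonal; you instead use the chain of invariant subspaces $V_H\subset V_{\cdot H}\subset\R^{16}$ and compute on the successive quotients. These are two ways of encoding the same block-triangular structure (your flag is a coarsening of the paper's full triangularization), and both yield the eigenvalues with the same amount of work. Your route has the mild advantage that one never has to write down the full $16\times 16$ ordering; the paper's has the advantage of making all sixteen diagonal entries visible at once.

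Two small points. First, your final tally ``$\lambda$ with multiplicity two, and twelve others bounded above by $-\gamma$'' drops the two $-\gamma$ eigenvalues already found inside $V_H$; the count should be fourteen others, two of which equal $-\gamma$. Second, your verification that $V_{\cdot H}$ is invariant only mentions the $\delta_{BH}$ factor in the third term; you should also note that the second term cannot produce a $B=T$ output from a $V_{\cdot H}$ input because that would require reading $v_{ij}^{TT}$, which vanishes on $V_{\cdot H}$.

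On the projection: showing that $P_\lambda$ is idempotent with image equal to the $\lambda$-eigenspace does not by itself identify it as the \emph{spectral} projection, which additionally requires $\ker P_\lambda=\{v:v_{11}^{HH}=v_{22}^{HH}=0\}$ to be $M_0$-invariant (equivalently, that the left $\lambda$-eigenvectors are the coordinate functionals $v\mapsto v_{11}^{HH}$ and $v\mapsto v_{22}^{HH}$). The paper's own proof is equally informal here, simply appealing to diagonalization of the $4\times4$ block, and in the sequel only the right eigenvectors $\mathbf u_1,\mathbf u_2$ are ever used; so your treatment matches the paper's in both content and level of rigor.
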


\begin{proof}
As we will see, after a suitable ordering of the indices, the matrix $M_0$ is lower triangular, allowing us to read off the eigenvalues and their multiplicity from the diagonal.  Assuming this for the moment, we start by computing the diagonal of the matrix $M_0$,
noting that the last term in \eqref{M0-def} gives a contribution to the diagonal if and only if 
$B=H$,
$C=A=B$, $i=j=k$ and
 $\delta_{ik}^{AC}=1$, showing that the only positive contribution to the diagonal are the two entries
$$
\delta_{ij}\delta^{AH}\delta^{BH} (c\beta-\gamma-\beta)=\lambda \delta_{ij}\delta^{AH}\delta^{BH} 
$$
with $i=j=1$ or $i=j=2$, while all others are smaller or equal than $-\gamma$.  This gives the statement about the eigenvalues of $M_0$.

Next we note that 
the restriction of  the matrix $M_0$ 
to the $4$-dimensional subspace of $\mathbb R^{16}$ spanned by 
  $\mathbf e_{11}^{HH}$,   $\mathbf e_{12}^{HH}$,   $\mathbf e_{22}^{HH}$ and  $\mathbf e_{21}^{HH}$ is equal to
  $$
\begin{bmatrix}
    \lambda&0&0&0\\
    c\beta&-\gamma&0&0\\
    0&0&\lambda&0\\
    0&0&c\beta&-\gamma
\end{bmatrix}.
  $$
Diagonalizing this matrix one immediately gets the statement about the projection to the eigenspace of $\lambda$.

We are left with is a proof of the lower triangular structure. 
To this end, we write down the edges $\alpha\to\alpha'$ for which $M_{\alpha',\alpha}\neq 0$ and $\alpha\neq\alpha'$, giving
\begin{align}
\renewcommand{\arraystretch}{0.7}
\left[
\begin{array}{@{}c@{}c@{}c@{}c@{}}   T&T \\ i&i \end{array}\right]
  &\to
\left[
\begin{array}{@{}c@{}c@{}c@{}c@{}}  T&H \\ i&i \end{array}\right],\quad
\left[
\begin{array}{@{}c@{}c@{}c@{}c@{}} T&H \\ i&\bar i \end{array}\right]
     \\
\left[
\begin{array}{@{}c@{}c@{}c@{}c@{}}  H&T \\ \bar i& i \end{array}\right]
  &\to
\left[
\begin{array}{@{}c@{}c@{}c@{}c@{}} T&H \\ i&i \end{array}\right],\quad
\left[
\begin{array}{@{}c@{}c@{}c@{}c@{}}  T&H \\ i&\bar i \end{array}\right]  
  \\
\left[
\begin{array}{@{}c@{}c@{}c@{}c@{}} T&H \\ \bar i& i \end{array}\right]
  &\to
\left[
\begin{array}{@{}c@{}c@{}c@{}c@{}} H&H \\ i&i \end{array}\right],\quad
\left[
\begin{array}{@{}c@{}c@{}c@{}c@{}} H&H \\ i&\bar i \end{array}\right]  
  \\
\left[
\begin{array}{@{}c@{}c@{}c@{}c@{}}  H&H \\ i& i \end{array}\right]
  &\to
\left[
\begin{array}{@{}c@{}c@{}c@{}c@{}}  H&H \\ i&\bar i \end{array}\right]  
\end{align}
where the last line only has one edge since the other is a contribution to the diagonal. If we introduce a partial order by 
\[
\renewcommand{\arraystretch}{0.7}
(TT) > (HT) > (TH) > (HH)
\quad\text{and}\quad
\left[
\begin{array}{@{}c@{}c@{}c@{}c@{}}
H & H\\
i & i
\end{array}\right]
>
\left[
\begin{array}{@{}c@{}c@{}c@{}c@{}}
H & H\\
i & \bar i
\end{array}
\right]
\]
we see that 
all transition corresponding to the last term in \eqref{M0-def} correspond to matrix elements $M_{\alpha',\alpha}$ where $\alpha'<\alpha$.  This clearly also holds for the second term, showing that all off-diagonal terms of $M_0$ respect this order.  Extending the above order to a total order (e.g., by defining a DAG and using topological sorting) this proves that $M_0$ is lower triangular.
\end{proof}

\begin{corollary}\label{cor:etM0-decay}
Let $\|\cdot\|$ be a sub-multiplicative matrix norm, and let $M_0$ be the matrix defined in \eqref{M0-def}. Then there exists a constant $C$ (depending on $M_0$) such that
\begin{equation}
\label{etM0-decay}
\|e^{tM_0}\|\leq C e^{t\lambda},
\end{equation}
showing in particular that the constant $C$ from Lemma~\ref{lem:pert} is finite.  More generally, these statements hold if $\lambda$ is a real eigenvalue of $M_0$ which has the same algebraic and geometric multiplicity, and the real part of all other eigenvalues is strictly smaller.
\end{corollary}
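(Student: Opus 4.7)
The plan is to reduce the bound to an analysis of the Jordan canonical form of $M_0$, using the hypothesis that the Jordan blocks associated to $\lambda$ are all trivial. Since any two matrix norms on a finite-dimensional space are equivalent, it suffices to establish $\|e^{tM_0}\|\leq Ce^{\lambda t}$ for one convenient submultiplicative norm; the inequality for any other such norm then follows by adjusting the constant.

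To carry this out, I would write $M_0=P(D+N)P^{-1}$ as its Jordan--Chevalley decomposition (over $\mathbb{C}$), where $D$ is diagonal with the eigenvalues $\mu_1,\dots,\mu_d$ on the diagonal, $N$ is nilpotent, and $D$ and $N$ commute. The equal-multiplicity hypothesis on $\lambda$ forces every Jordan block at $\lambda$ to be of size one, which means $N$ vanishes identically on the generalized eigenspace of $\lambda$. Consequently $e^{tM_0}=Pe^{tD}e^{tN}P^{-1}$ acts as $e^{\lambda t}$ times the identity on the $\lambda$-eigenspace, and as $e^{\mu_i t}$ times a matrix polynomial in $t$ of degree at most $k_i-1$ on the generalized eigenspace of each other eigenvalue $\mu_i$, where $k_i$ is the size of the largest Jordan block at $\mu_i$. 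Since $\operatorname{Re}(\mu_i)<\lambda$ for every $\mu_i\neq\lambda$, each entry $t^j e^{\operatorname{Re}(\mu_i)t}$ satisfies $t^j e^{\operatorname{Re}(\mu_i)t}\leq C_{i,j}\,e^{\lambda t}$ uniformly in $t\geq 0$. Summing the finitely many block contributions and absorbing the factor $\|P\|\,\|P^{-1}\|$ into the overall constant then yields $\|e^{tM_0}\|\leq Ce^{\lambda t}$.

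The only subtle point is the role of the equal-multiplicity hypothesis on $\lambda$; without it, a nontrivial nilpotent part would appear on the $\lambda$-eigenspace and the bound would degrade to $Ct^j e^{\lambda t}$ for some $j\geq 1$, which is precisely what Lemma~\ref{lem:pert} cannot tolerate. For the concrete $M_0$ defined in \eqref{M0-def} this hypothesis is verified explicitly in Lemma~\ref{lem:matrix_eigenvalues} by the exhibition of a two-dimensional eigenspace at $\lambda$, while in the generalized version of the corollary it is taken as an assumption. The finiteness of the constant $C$ appearing in Lemma~\ref{lem:pert} is then immediate, since $C=\sup_{t\geq 0}e^{-\lambda t}\|e^{tM_0}\|$ is bounded by the constant produced by the above argument. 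The rest is routine linear algebra and does not depend on any further structure of $M_0$.
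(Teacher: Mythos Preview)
Your proposal is correct and follows essentially the same approach as the paper: both arguments pass to the Jordan form, use the semi-simplicity of $\lambda$ to ensure there is no nilpotent contribution on the $\lambda$-eigenspace, bound each remaining block by $C_{i,j}\,t^j e^{\operatorname{Re}(\mu_i)t}\le C'e^{\lambda t}$ using the strict spectral gap, and absorb the change-of-basis factors $\|P\|\,\|P^{-1}\|$ into the constant. Your remark about norm equivalence and your explicit handling of possibly complex eigenvalues via $\operatorname{Re}(\mu_i)$ are slightly more careful than the paper's write-up, but the underlying argument is the same.
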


\begin{proof}By the previous lemma, the eigenvalue $\lambda$ is semi-simple, implying that the Jordan block corresponding to $\lambda$ is diagonal. Together with the fact that all other eigenvalues of $M_0$ are real and strictly smaller, the corollary follows.  More explicitly, let $P$ be an invertible matrix such that $J=PM_0P^{-1}$ is in Jordan normal form.  Since $\|\cdot
\|$ is sub-multiplicative, we have
$$  
\|e^{tM_0}\|=\|P^{-1}Pe^{tM_0}P^{-1}P\|=\|P e^{tJ} P^{-1}\|\leq \|P\| \|P^{-1}\|\|e^{tJ}\|.
$$
Now we can bound the norm $\| e^{tJ}\|$. Let $J_{k_i}$ denote the $i$th block of the Jordan matrix $J$ with dimension $k_i$. Each block can be written as $J_{k_i} = \lambda_i I + N_{k_i}$ where $\lambda_i$ is the $i$th eigenvalue, $I$ is the identity matrix, and $N_{k_i}$ is the matrix with ones on the superdiagonal. Notice that $N_{k_i}$ is nilpotent with power $k_i$. We can bound the norm of each block by
\[
    \|e^{t J_{k_i}}\| = \|e^{t(\lambda_i I + N_{k_i})} \| \leq C_{k_i}e^{t \lambda_i}(1+t^{k_i -1})
\]
for constants $C_{k_i}$. Since $\lambda$ is a semi-simple eigenvalue, this implies the norm can be upper-bounded as
\[
    \|e^{tJ}\| \leq e^{t \lambda}\left(2 + \sum_{i:\lambda_i \neq \lambda}C_{k_i} e^{t(\lambda_i - \lambda)}(1+t^{k_i-1})\right) = O(e^{t \lambda})
\]
where the last equality is true since all other eigenvalues are strictly smaller than $\lambda$. Thus, 
\[
    \|e^{tM_0}\| \leq C e^{t\lambda},
\]
where $C$ is a finite constant that depends on $M_0$ through $P$ and the eigenvalues of $M_0$.
\end{proof}

\subsection{Proof of  Proposition~\ref{prop:BP-upper-bound}}

At this point, we have all we need to prove the desired upper bounds on the 2-type epidemic with travel.

\begin{proof}[Proof of Proposition~\ref{prop:BP-upper-bound}]
Recalling Proposition~\ref{prop:e^tM-upper-bound}, we need to bound 
$
\left(e^{tM}\mathbf e\right)_{ij}^{AB}$
where $\mathbf e(0)$ is the 16-dimensional vector $e_{ij}^{AB}=\delta_{i1}\delta^{AH}c_B$.
To this end, we first bound
\[
\mathbf e \leq c_T {\mathbf e}_T + c \mathbf e_H
\]
where ${\mathbf e_T}=\mathbf e_{11}^{HT}+\mathbf e_{12}^{HT}$ and $\mathbf e_H={\mathbf e}_{11}^{HH}+\frac c{c-1}{\mathbf e}_{12}^{HH}$, with the latter being one of the two eigenvectors of $M_0$ corresponding to $\lambda$, and then use that
$$c_T\|e^{tM_0}\mathbf e_T\|_1=\tilde O(n^{-\alpha})\|e^{tM_0}\|=\tilde O(n^{-\alpha}e^{\lambda t}),
$$
where in the last step we used Corollary~\ref{cor:etM0-decay}.
Combining these bounds with the fact that $M=M_0+W$ (where $\|W\|=\tilde O(n^{-\alpha})$) and  
Lemma~\ref{lem:pert},
we get
$$
\left(e^{tM}\mathbf e\right)_{ij}^{AB}
\leq c e^{\lambda t}\left(\mathbf e_H\right)_{ij}^{AB}
+\tilde O(n^{-\alpha}e^{\lambda t})
$$
Using the bound on $\E[Y_i(t)]$ from Proposition~\ref{prop:e^tM-upper-bound}, this implies that

\begin{align*}
\E[Y_i(t)]&\leq \delta_{i1}+
\sum_{A,B,j}\delta_{ij}^{AB}\left(\mathbf e_H\right)_{ji}^{AB}\int_0^t ce^{\lambda s}\, ds+\tilde O(n^{-\alpha}e^{\lambda t})+\frac tn
\\
&=
\left(1+\frac{c(e^{\lambda t}-1)}\lambda\right)\delta_{i1}+\tilde O(n^{-\alpha}e^{\lambda t})
\end{align*}
The bounds from Proposition~\ref{prop:BP-upper-bound} now follow using 
Markov's inequality.
\end{proof}

\subsection{Herd Immunity}\label{sec:Herd-Immunity}
In this section, we prove Proposition \ref{prop:herd-immunity}. To do so, we use the fact that once the epidemic has infected enough susceptible vertices, the epidemic becomes subcritical as does the two community branching process approximation. Then, using the bounds proved in previously in this section we show that the infection dies out in time  $\ln n$.

\begin{proof}[Proof of  Proposition \ref{prop:herd-immunity}]
    Let $\calE_0$ be the event that $\max\{S_1(t_0), S_2(t_0)\} \leq (s_0 - \delta)n$ where  $s_0=1/ R_0$. Under the event $\calE_0$, we may bound the drift of $\tilde X_{ij}^{AB}(t)$ for $t\geq t_0$ 
by 
\begin{equation}\label{tilde-X-drift-upper-bound-hi}
\drift \tilde X_{ij}^{AB}\leq
\left(\tilde M \tilde X\right)_{ij}^{AB},
\end{equation}
 where $\tilde M$ is the $16 \times 16$ matrix defined by
    \begin{equation}\label{tilde-M-def}
    (\tilde M\mathbf v)_{ij}^{AB}=
    -\left(\gamma+\rho_{\bar A}+\rho_{\bar B}+\beta \delta_{ij}^{AB}\right) v_{ij}^{AB}
    +\rho_{\bar A}v_{ij}^{\bar A B}+\rho_{\bar B}v_{ij}^{A\bar B}
    +\tilde c_B\beta_i^A (\mathbf v)      
    \end{equation}
    with   \begin{equation}
    \tilde c_H=c(s_0 - \delta)\quad\text{and}\quad \tilde c_T=c n^{-\alpha}\ln^3n=\tilde O(n^{-\alpha}).
    \end{equation}
This allows us to
replace the bound in Proposition \ref{prop:e^tM-upper-bound} by
    \[
        \E[X_{ij}^{AB}(t)\mid\calE_0]\leq\left(e^{(t-t_0)\tilde M}\E[\tilde X(t_0)\mid\calE_0]\right)_{ij}^{AB} +\frac 1n
    \]
Defining $\tilde M_0$ by
\begin{equation}\label{tilde-M0-def}
(\tilde M_0 \mathbf v)_{ij}^{AB}=
 -\left(\gamma+\rho_H\delta_{AT}+\rho_H\delta_{BT}+\beta \delta_{ij}^{AB}\right)v_{ij}^{AB}
+\rho_H\left(\delta_{AH}v_{ij}^{T B}+\delta_{BH}v_{ij}^{AT}\right)
+\beta c(s_0-\delta) \,\delta_{BH}\sum_{k,C}\delta_{ik}^{AC}v_{ki}^{CA}
\end{equation}
and observing that $\beta c(s_0-\delta)-\beta-\gamma=-\beta c\delta$, we may proceed as in the proof of
Lemma \ref{lem:matrix_eigenvalues} to conclude that  the largest eigenvalue of $\tilde M_0$ is $\tilde \lambda = -\beta c \delta$. Assuming without loss of generality that $\delta$ is small enough to guarantee that all other eigenvalues are strictly smaller, we are again in the setting of
Corollary~\ref{cor:etM0-decay}, allowing us to proceed as before to conclude that
\[
\|e^{(t-t_0)\tilde M}\|=
\|e^{(t-t_0)\tilde M_0}\|+\tilde O(n^{-\alpha}e^{\tilde\lambda(t-t_0)})=
O(e^{-\beta c\delta(t-t_0)})+\tilde O(n^{-\alpha}e^{-\beta c\delta (t-t_0)})=O(e^{-\beta c\delta(t-t_0)}).
\]
Combined with the naive bound  $\|\tilde X(t_0)\|_1=\sum_{i, j, A, B} X_{ij}^{AB}(t_0) \leq  n^2$, this implies that
  \begin{equation}\label{eq:x_tilde_bound}
   \E\left[ \| [ \tilde X(t)\|_1\mid \calE_0\right]=O(n^2e^{-\beta c\delta(t-t_0)})+ O\Big(\frac 1n\Big)
    \end{equation}
    for all $t\in [t_0,\ln^2 n]$.
    Define the stopping time $\tau = \inf \left\{t > t_0: \sum_{i,, j, A, B} \tilde X_{ij}^{AB}(t) = 0\right\}$. Let $C$ be a constant to be determined later. Then, we have 
    \begin{align*}
        \Pr\big(\tau > t_0 + C\ln n \mid \calE_0\big) & = \Pr\bigg(\| X(t_0 + C\ln n)\|_1 \geq 1\;\bigg|\; \calE_0\bigg)
        \leq \E\big[\|\tilde X(t_0 + C\ln n)\|_1\;\big|\;\calE_0\big]
        =O(n^{2-\beta\delta c C})+O\Big(\frac 1n\Big)
    \end{align*}
We conclude that for any $C > \beta c\delta/2$ at time $t^* = t_0 + C\ln n$, conditioned on $\calE_0$, $X(t^*) = 0$ with high probability. Although conditioned on $\calE_0$ $X(t^*) = 0$ w.h.p., this does not immediately imply the same for $I_1(t^*) + I_2(t^*) = 0$ since vertices can be infected but not have any active edges. Thus, our last step is to show that conditioned on $\calE_0$, $I_1(t^*) + I_2(t^*) = 0$. Assume that all $2n$ vertices are infected at time $t^*$. Let $Z_1, \dots, Z_{2n} \sim \Expo(\gamma)$. Then, we have the following:
\begin{align*}
    \Pr(I_1(t^*+s) + I_2(t^*+s) > 0 \mid \calE_0) &\leq \sum_{i=1}^n \Pr(Z_i > s) = ne^{-\gamma s}.
\end{align*}
Thus, letting $s = \tilde C\ln n$, where $\tilde C > 1/\gamma$, and conditioning on $\calE_0$, $I_1(t^*+s) + I_2(t^*+s) =0 $ w.h.p., giving the desired result.
\end{proof}

\subsection{Travel Ban Upper Bounds}
\label{sec;BP-travelban-upper-bd}

For upper bounds on the size of the branching process in the event of a travel ban, we will follow the proofs in Sections \ref{sec:bp_upper_bounds} and \ref{sec:perturbation_bounds}, making adjustments where needed to reflect the effects of the travel ban.

First, we note that the drift of $\tilde X_{ij}^{AB}$ can be upper bounded as follows:
\[
 \drift \tilde X_{ij}^{AB}\leq
 -\left(\gamma+\beta \delta_{ij}^{AB}\right) \tilde X_{ij}^{AB}
+\rho_{ A}\tilde X_{ij}^{\bar A B}+\rho_{B}\tilde X_{ij}^{A\bar B}
+c_B\beta_i^A (\tilde X),   
\]
where the difference between this upper bound and the one found in \eqref{tilde-X-drift-upper-bound} is that the terms arising from edge removal due to travel are removed. This immediately implies that Proposition \ref{prop:e^tM-upper-bound} holds with the matrix $M$ replaced by the matrix $\tilde M$, where $\tilde M$ is defined by
\[
(\tilde M\mathbf v)_{ij}^{AB}=
-\left(\gamma+\beta \delta_{ij}^{AB}\right) v_{ij}^{AB}
+\rho_{A}v_{ij}^{\bar A B}+\rho_{B}v_{ij}^{A\bar B}
+c_B\beta_i^A (\mathbf v).
\]
Now, we will apply Lemma \ref{lem:pert} with $\tilde M = \tilde M_0 + \tilde W$, and $\tilde M_0$ defined by
\[
    (\tilde M_0 \mathbf v)_{ij}^{AB}=
 -\left(\gamma+\beta \delta_{ij}^{AB}\right)v_{ij}^{AB}
+\rho_H\left(\delta_{AH}v_{ij}^{T B}+\delta_{BH}v_{ij}^{AT}\right)
+\beta c \,\delta_{BH}\sum_{k,C}\delta_{ik}^{AC}v_{ki}^{CA}.
\]
Note that, with this new matrix $\tilde M_0$, Lemma \ref{lem:matrix_eigenvalues} still holds. The indices of $\tilde M_0$ can be reordered such that $\tilde M_0$ is lower triangular with eigenvalues $\lambda$ of multiplicity $2$ and all other eigenvalues smaller than $-\gamma$. Furthermore, the restriction to the subspace spanned by $\mathbf e_{11}^{HH}$,   $\mathbf e_{12}^{HH}$,   $\mathbf e_{22}^{HH}$ and  $\mathbf e_{21}^{HH}$ remains the same. This also implies that the eigenvectors corresponding to $\lambda$ for $\tilde M_0$ are the same as the eigenvectors corresponding to $\lambda$ for $M_0$. Furthermore $\|\tilde W \| = \tilde O(n^{-\alpha})$, and by an analogous proof as in the proof of Proposition \ref{prop:BP-upper-bound}, we see that Proposition \ref{prop:BP-upper-bound} still holds using the modified upper bound for the drift of $X_{ij}^{AB}$.

Now, consider the setting where we implement a travel ban at time $\tau_1(\epsilon)$. Notice that for all $t > \tau_1(\epsilon)$, we have the following upper bound on the drift of $\tilde X_{ij}^{AB}$:
\[
 \drift \tilde X_{ij}^{AB}\leq
 -\left(\gamma+\beta \delta_{ij}^{AB}\right) \tilde X_{ij}^{AB}+c_B\beta_i^A (\tilde X).  
\]
This can be represented by a matrix $\tilde M^*$ defined by
\[
(\tilde M^*\mathbf v)_{ij}^{AB}=
-\left(\gamma+\beta \delta_{ij}^{AB}\right) v_{ij}^{AB}+c_B\beta_i^A (\mathbf v).
\]
Notice that $\tilde M$ is an \textit{entry wise} upper bound for the matrix $\tilde M^*$, thus the bounds in Proposition \ref{prop:BP-upper-bound} are upper bounds regardless of whether travel occurs, proving the desired  upper bound on $Y_2(\tau_1(\epsilon))$ and lower bound on $\tau_2(\epsilon)$.
In a similar way,
replacing the matrix $\tilde M$ defined in \eqref{tilde-M-def} by
$$
    (\tilde M\mathbf v)_{ij}^{AB}=
    -\left(\gamma+\beta \delta_{ij}^{AB}\right) v_{ij}^{AB}
    +\rho_{\bar A}v_{ij}^{\bar A B}+\rho_{\bar B}v_{ij}^{A\bar B}
    +\tilde c_B\beta_i^A (\mathbf v),     
$$
we see that Proposition \ref{prop:herd-immunity} continues to hold if we implement a ban at time $\tau_1(\epsilon)$.

\subsection{Social Distancing Upper Bounds}
\label{sec;BP-socialdist-upper-bd}

In this section, we will show that under social distancing, conditioned on a large outbreak, the infection in community 2 will die out in $O(\ln n)$ time and that for all $\delta$, w.h.p.,  $\ln R_2(\infty)$ is upper bounded by $(1-\alpha+\delta)\ln n$. Heuristically, to do this, we condition on the event of a large outbreak and run the epidemic until a time $t_0$, at which point the epidemic has reached herd immunity in community 1 with high probability. We then show that at time $t_0$, the number of infections in community 2 is $\tilde O(n^{-\alpha}e^{\lambda t_0})$ and that implementing social distancing at time $t_0$ will only add a factor $n^\delta$ new infections. We will choose $t_0$ such that the bounds we derive will be an upper bound on the final size of the infection in community 2 if we social distance at time $\tau_1(\epsilon)$. 

Set $t_0 := (1+\delta) \frac{\ln n}{\lambda}$. Let $\calE_0$ be the event of a large outbreak and let $\tilde \calE_0 = \{\tau_1(\epsilon') \leq t_0\}$ where $\epsilon' \geq \epsilon$ is chosen such that $s_0 > 1-\epsilon' > 1- r_\infty$ with $s_0 = 1/R_0$ being the herd immunity threshold for a single community. By Proposition \ref{prop_lower_bounds_comm1}, $\Pr(\tilde \calE_0) > \pi- \delta$. Note that  $\calE_0^c$ implies $\tilde \calE_0^c$, so $\tilde \calE_0 \subseteq \calE_0$. In addition, $\Pr(\calE_{0}) \to \pi$ so
$\Pr(\tilde \calE_0 \mid \calE_0) \to 1$. 

First, we require bounds on $\E[\tilde X(t_0) \mid \tilde \calE_0]$ and $\E[Y_2(t_0) \mid \tilde \calE_0]$, the conditional size of the epidemic at time $t_0$. Since  social distancing only decreases the rate of infections, the bounds we derived when proving Proposition~\ref{prop:BP-upper-bound} still hold, showing  that 
\[
     \E[Y_2(t_0)] = \tilde O(n^{-\alpha}e^{\lambda t_0})\quad\text{and}\quad
        \E[X_{ij}^{AB}(t_0)] \leq c(\mathbf e_H)_{ij}^{AB} e^{\lambda t_0} + \tilde O(n^{-\alpha}e^{\lambda t_0}),
\]
where as before $\mathbf e_H={\mathbf e}_{11}^{HH}+\frac c{c-1}{\mathbf e}_{12}^{HH}$. Since $\E[\tilde\calE_0]\geq \pi-\delta$, this implies  that
\begin{equation}\label{eq:bnd-sd-y-2}
    \E[Y_2(t_0) \mid \tilde \calE_0] \leq \frac{ \E[Y_2(t_0)] }{\E[ \tilde \calE_0]}=
    \tilde O(n^{-\alpha} e^{\lambda t_0})
\end{equation}
and
\begin{equation}\label{eq:bnd-sd-tilde-x}
    \E[\tilde X_{ij}^{AB}(t_0) \mid \tilde\calE_0] \leq  \E[X_{ij}^{AB}(t_0) \mid \tilde\calE_0] \leq \left(\mathbf e_H\right)_{ij}^{AB}  O(e^{\lambda t_0}) + \tilde O(n^{-\alpha}e^{\lambda t_0}).
\end{equation}

Next we note that after social distancing is implemented, 
the rate at which an active edge with indices $(ij,AB)$ transmits an infection becomes
\[
\tilde \beta_{ij}^{AB} = \begin{cases}
    \beta &\text{ for } \; (ij, AB) \in \{(11, HH), (12, HT), (21, TH), (22, TT)\} \\
    \beta'&\text{ for } \; (ij, AB) \in \{(22, HH), (21, HT), (12, TH), (11, TT)\}
\end{cases}
\]
where  $\beta'<\beta$ is such that $\lambda' := c\beta' - \beta' - \gamma < 0$. 

Conditioned on the event $\tilde\calE_0$, we have that
$S_1(t)\leq n(1-\epsilon')= n(s_0-\delta')$ for all  $t\geq t_0$, where $\delta'=s_0-(1-\epsilon')>0$.  Furthermore, since $\epsilon'\geq \epsilon$, we may assume that social distancing is implemented by time $t_0$.  As a consequence, for $t\geq t_0$, the upper bound 
\eqref{tilde-X-drift-upper-bound} for the drift of $\tilde X$ can be improved to
\begin{equation}\label{tilde-X-drift-upper-bound-sd}
     \drift \tilde X_{ij}^{AB}\leq
\left(\tilde M\tilde X\right)_{ij}^{AB}
\end{equation}
where the matrix $\tilde M$ is defined by 
\[
    (\tilde M\mathbf v)_{ij}^{AB}=
-\left(\gamma+\rho_{\bar A}+\rho_{\bar B}+\tilde\beta_{ij}^{AB} \delta_{ij}^{AB}\right) v_{ij}^{AB}
+\rho_{A}v_{ij}^{\bar A B}+\rho_{B}v_{ij}^{A\bar B}
+c_j^B\tilde\beta_i^A (\mathbf v)  
\]
with 
\[
    c_j^H = c((s_0 - \delta')\delta_{j1} + \delta_{j2}) \qquad c_j^T = cn^{-\alpha}\ln^3 n.
\]
and $\tilde \beta_i^A (\mathbf v) = \sum_{k, C} \delta_{ki}^{AC} v_{ki}^{CA} \tilde\beta_{ki}^{CA}$.

As in the case without social distancing, we analyze the matrix $\tilde M$  by decomposing it into a part $\tilde M_0$ involving the evolution without travel, plus a perturbation of order $\tilde O(n^{-\alpha})$, with $\tilde M_0$ defined by
\[
(\tilde M_0 \mathbf v)_{ij}^{AB}=
 -\left(\gamma+\rho_H(\delta_{AT}+\delta_{BT})+\tilde\beta_{ij}^{AB} \delta_{ij}^{AB}\right)v_{ij}^{AB}
+\rho_H\left(\delta_{AH}v_{ij}^{T B}+\delta_{BH}v_{ij}^{AT}\right)
+ c_j^H\delta_{BH}\sum_{k,C}\delta_{ik}^{AC}v_{ki}^{CA}{\tilde\beta_{ki}^{CA}}.
\]
By the same argument as in the proof of Lemma \ref{lem:matrix_eigenvalues}, the largest eigenvalue of $\tilde M_0$ is 
the largest diagonal element of $\tilde M$, and as before, the only contribution to the diagonal coming from the last term is the
contribution where $i=j=k$ and $A=B=C=H$, corresponding to the two diagonal entries
$$
\tilde\lambda:=c\big((s_0 - \delta')-1\big)\tilde\beta_{11}^{HH}-\gamma=-c\delta'\beta<0
\quad\text{and}\quad
\lambda':=(c-1)\tilde\beta_{22}^{HH}-\gamma=(c-1)\beta'-\gamma<0.
$$
Since all other diagonal entries are at most $-\gamma$ and $\lambda'>-\gamma$, this shows that the largest eigenvalue of $\tilde M_0$ is equal to $\nu=\max\{\tilde\lambda,\lambda'\}$, which is negative.  
Note that the matrix $\tilde M_0$ leaves the 
four-dimensional subspace spanned by $\mathbf e_{11}^{HH}, \mathbf e_{12}^{HH}, \mathbf e_{22}^{HH},$ and $\mathbf e_{21}^{HH}$  invariant, with its restriction to this subspace given by
\begin{equation}\label{M0-restricted}
    \begin{bmatrix}
        \tilde \lambda & 0 & 0 & 0 \\
        c\beta & -\gamma & 0 & 0 \\
        0 & 0 & \lambda' & 0 \\
        0 & 0 & c(s_0-\delta')\beta' & - \gamma
    \end{bmatrix}.
\end{equation}
Diagonalizing this matrix, one easily verifies that the eigenvalue
$\nu$ is semi-simple, allowing us to
proceed as in the proof of Proposition \ref{prop:herd-immunity} in Section \ref{sec:Herd-Immunity} to conclude that
 conditioned on $\tilde \calE_0$ the epidemic dies out in $O(\ln n)$ time.

Now, we show that a factor of only $n^{\delta}$ infections are added in community 2 after time $t_0$. For all $t_0 \leq t \leq \ln^2 n$, replacing the bound in Proposition \ref{prop:e^tM-upper-bound} using our matrix  $\tilde M$, we have the upper bounds
\begin{equation}\label{eq:tilde_m_social_distancing}
  \E[X_{ij}^{AB}(t) \mid \tilde \calE_0] \leq \left(e^{(t-t_0) \tilde M}\E[\tilde X(t_0) \mid \tilde \calE_0]\right)_{ij}^{AB} + \frac{1}{n},
\end{equation}
implying that
\begin{equation}\label{eq:Y_social_distancing}
   \E[Y_i(t) \mid \tilde \calE_0]\leq \E[Y_i(t_0) \mid \tilde \calE_0]+
    \sum_{A,B,j}\delta_{ij}^{AB}\int_{t_0}^tds \left(e^{(s-t_0)\tilde M}\E[\tilde X(t_0) \mid \tilde \calE_0]\right)_{ji}^{AB}    +\frac{t-t_0}n\\
\end{equation}
As a consequence of \eqref{eq:Y_social_distancing} and \eqref{eq:bnd-sd-tilde-x}, we have
\begin{equation}
 \label{eq:Y_sd_integral_ub}
\begin{aligned}
   \E[Y_i(t) \mid \tilde \calE_0]&\leq \E[Y_i(t_0) \mid \tilde \calE_0]+
    \sum_{A,B,j}\delta_{ij}^{AB}\int_{t_0}^tds \left(e^{(s-t_0)\tilde M}\E[\tilde X(t_0) \mid \tilde \calE_0]\right)_{ji}^{AB}    +\frac{t-t_0}n\\
    &\leq \E[Y_i(t_0) \mid \tilde \calE_0] + \sum_{A, B, j} \delta_{ij}^{AB} \int_{t_0}^t ds \, \left(e^{(s-t_0)\tilde M}\mathbf e_H\right)_{ji}^{AB} O(e^{\lambda t_0}) + 
    \int_{t_0}^t ds \|e^{(s-t_0)\tilde M}\|
    \tilde O(n^{-\alpha}e^{\lambda t_0})+\frac{t-t_0}n
\end{aligned}
\end{equation}
To continue, we use Lemma~\ref{lem:pert} to approximate $\tilde M$ by $\tilde M_0$, using again that $\|\tilde M-\tilde M_0\|=\tilde O(n^{-\alpha})$.  This gives the bound
\begin{equation}
 \label{eq:Y_sd_integral_ub-2}
\begin{aligned}
   \E[Y_i(t) \mid \tilde \calE_0]
    &\leq \E[Y_i(t_0) \mid \tilde \calE_0] + \sum_{A, B, j} \delta_{ij}^{AB} \int_{t_0}^t ds \, \left(e^{(s-t_0)\tilde M_0}\mathbf e_H\right)_{ji}^{AB} O(e^{\lambda t_0}) 
       + \int_{t_0}^t ds e^{(s-t_0)\nu}\tilde O(n^{-\alpha} e^{\lambda t_0}) 
   + \frac{t-t_0}n\\
   &=\E[Y_i(t_0) \mid \tilde \calE_0] + \sum_{A, B, j} \delta_{ij}^{AB} \int_{0}^{t-t_0} ds \, \left(e^{s\tilde M_0}\mathbf e_H\right)_{ji}^{AB} O(e^{\lambda t_0}) 
       + \tilde O(n^{-\alpha} e^{\lambda t_0}) 
  \end{aligned}
\end{equation}
where in the last step we used that $\nu<0$ to bound the second integral by a constant.

Let $\mathbf v_1$ and $\mathbf v_2$ be the eigenvectors associated with the eigenvalues in the upper left-hand block of the restricted matrix \eqref{M0-restricted}, 
corresponding to eigenvalues $\tilde \lambda$ and $-\gamma$. Since these two eigenvectors are distinct, they span same space as the vectors $\mathbf e_{11}^{TT}$ and $\mathbf e_{12}^{TT}$, showing that $\mathbf e_H$ can be expressed as $\mathbf e_H=a_1 \mathbf v_1 + a_2 \mathbf v_2$ for appropriately chosen constants $a_1, a_2$. Thus
\[
    (e^{s \tilde M_0}\mathbf e_H)_{ji}^{AB} =(e^{s \tilde M_0} (a_1 \mathbf v_1 + a_2 \mathbf v_2 ))_{ji}^{AB} = a_1 e^{\tilde \lambda t}\left(\mathbf v_1\right)_{ji}^{AB} + a_2 e^{-\gamma s}\left(\mathbf v_2\right)_{ji}^{AB}
  =O(e^{s\nu})\delta^{AH}\delta^{BH}\delta_{j1}
\]
where in the last step we used that the $\nu$ is the largest eigenvector of $\tilde M_0$ and thus an upper bound for both $\tilde \lambda$ and $-\gamma$, combined with the fact that both 
$\left(\mathbf v_1\right)_{ji}^{AB}$ and $\left(\mathbf v_2\right)_{ji}^{AB}$ are zero unless $A=B=H$ and $j=1$. Inserting this back into \eqref{eq:Y_sd_integral_ub} and using that $\nu<0$ to bound the integral by a constant yields
\begin{align*}
    \E[Y_i(t) \mid \tilde \calE_0] &\leq \E[Y_i(t_0) \mid \tilde \calE_0] + \delta_{i1} O(e^{\lambda t_0}) + \tilde O(n^{-\alpha}e^{\lambda t_0}), 
\end{align*}
which in particular implies that
\[
    \E[Y_2(t) \mid \tilde \calE_0] = \tilde O(n^{-\alpha}e^{\lambda t_0})=o(n^{\delta -\alpha}e^{\lambda t_0} )
\]
for all $t\in [t_0,T]$.
Using that w.h.p., the pandemic dies out in time $O(\ln n)$ and hence $R_2(\infty)=Y_2(T)$, Markov's inequality then implies that 
\begin{align*}
    \Pr(R_2(\infty)> n^{\delta -\alpha}e^{\lambda t_0}  \mid \tilde \calE_0) =o(1).
\end{align*}
Recalling our definition of $t_0$, this shows that, conditioned on $\tilde \calE_0$ w.h.p. $R_2(\infty) \leq n^{1-\alpha+2\delta}$. 
Finally, using the fact that $\Pr(\tilde \calE_0 \mid \calE_0) \to 1$ we conclude that the result also holds if we condition on $\calE_0$ instead of $\tilde \calE_0$. Since $\delta$ was arbitrary, this proves the desired upper bound on $R_2(\infty$).

\section{Simulations}
In this section, we perform additional robustness checks for our theorems by simulating SIR dynamics on realistic, two community contact networks with various degree distributions. For all simulations done in this section, the disease dynamics are simulated as follows. Given the contact network, an initially infected node is seeded uniformly at random. Nodes travel at rate $\rho_\Travel = n^{-1/2}$ and return home at rate $\rho_\Home = 1$. When edges are active, an infection event happens down each edge at rate $\beta$, and nodes recover according to a $\mathrm{Gamma}(\alpha, \theta)$ distribution parameterized by the shape ($\alpha$) and scale ($\theta$), as is known to be more realistic \cite{wearing2005appropriate, lloyd2001realistic}.  We choose $\alpha = 3.5$ and $\theta = 0.2014$, roughly based on estimates in the existing literature for COVID-19 and a mean recovery time of $5$ days (the timescale here is in weeks) \cite{sender2022unmitigated, kremer2022serial}. In addition, we choose $\beta = 0.1$, keeping all parameters consistent with the simulations in the main text; note that this may change the resulting $R_0$ for the different contact networks depending on the graph structure. Any interventions are implemented once $\epsilon = 1\%$ of the population are infected. As in the main text, we assume that intra-community interventions reduce the transmission rate $\beta$ by a factor of $5$, but any factor which reduces the effective $R_0$ to below $1$ suffices.  

The simulations here together with the ones in the main text demonstrate that our results are robust under misspecification of both the network and the parameters of disease transmission. 

\subsection{Copenhagen Network Study Dataset}

We construct a weekly contact network from the Copenhagen Networks Study (CNS) Bluetooth proximity data \cite{sapiezynski2019interaction, sapiezynski2019data}. The dataset records Bluetooth scans between study participants with associated Received Signal Strength Indicator (RSSI) values and timestamps. We pre-process the data by converting the timestamps to study days and excluding scan records where one participant ID was invalid. An edge in the network between two participants exists if they were detected via Bluetooth on the same study day. No RSSI threshold was applied. 

To construct the weekly network, we aggregate all scan events in the second week (measured relative to study days), creating an undirected edge list where each edge represents at least one proximity detection that day. We remove edges representing pairs of participants that only had one scan detection in that week to reduce noise from chance encounters. We use standard optimization methods (i.e. Nelder-Mead) for maximum likelihood estimation to fit a log-normal distribution to the degree distribution of the resulting contact network. The resulting distribution gives a log-normal distribution with parameters $\mu = 2.453$ and $\sigma = 1.040$ (see Figure \ref{fig:copenhagen-deg-dist}).

We use this log-normal degree distribution to construct a two-community graph, with each community having $n = 10^7$ nodes. For each node, we use the log-normal distribution specified above to sample its degree into its own community and into the other community (i.e. its home degree and travel degree). Since node degrees must be non-negative integers, we take the floor of each sampled value. We then use a configuration model to generate the network by matching degree stubs and removing duplicate edges after the pairing completes. Finally, we implement a stochastic simulation of the travel dynamics and epidemic dynamics. We seed an initially infected node uniformly at random. Our simulations based on the Copenhagen Network Study Dataset are shown in Figure \ref{fig:sim}.

\subsection{Degree Assortativity}
\label{sec:deg-assort}
We construct a contact network with two communities each on $n = 10^7$ vertices that has degree assortativity. We first label vertices with their intrinsic type (1 or 2). Then, for each vertex, we sample log-normal degree distribution with parameters $\mu = 1.4$ and $\sigma = 1.27$ following the degree distribution in \cite{hill2021network}. After the degrees are sampled, we further classify vertices into classes: high $(H)$, medium $(M)$, or low $(L)$ degree. The nodes with top 15\% degree are classified as high degree, the next 35\% are classified as medium degree, and the final 50\% are classified as low degree. Finally, degree stubs are matched in three stages. (1) They are assigned a type preference: with probability $0.8$ they are matched to stubs with their same intrinsic type and with probability $0.2$ they are matched to stubs with the opposite type. (2) They are assigned a degree-class preference: degree-class pairing is performed based on an assortativity matrix $Q$, where $Q_{ab}$ is the probability that a stub from a vertex of degree class $a$ is matched to a stub from a vertex of degree class $b$. The parameters of $Q$ are chosen so that the network demonstrates assortativity, meaning that stubs prefer to match to other stubs within their degree class, and so that the detailed balance equation $\E[N_{ab}] = \E[N_{ba}]$ is satisfied, where $N_{ab}$ is the number of stubs of type $a$ matched to stubs of type $b$.\footnote{In order to achieve detailed balance computationally, the initially chosen $Q$ matrix is perturbed slightly, conditioned on the realized degree distribution, using Metropolis-Hastings.} For our graph, we use
\[
    Q = \begin{pmatrix}
    0.939 &  0.0516 & 0.009 \\
    0.1 & 0.865 & 0.035 \\
    0.05 & 0.1 & 0.85
    \end{pmatrix}.
\]
(3) Remaining stubs are matched uniformly at random. The assortativity coefficient of the resulting graph is around $0.11$, showing mild assortativity consistent with some real-world networks \cite{newman2002assortative}. Finally, we implement a stochastic simulation of the travel dynamics and epidemic dynamics. Our simulations on the degree assortative network are shown in Figure \ref{fig:sim-assortative-v2}.

\subsection{Degree Disassortativity}
We construct a contact network with two communities each on $n = 10^7$ vertices that has mild degree disassortativity in a similar fashion as the degree assortative network constructed in Section \ref{sec:deg-assort}. We first label vertices with their intrinsic type (1 or 2). Then, for each vertex, we sample log-normal degree distribution with parameters $\mu = 1.4$ and $\sigma = 1.27$ following \cite{hill2021network}. After the degrees are sampled, we further classify vertices into classes: high $(H)$, medium $(M)$, or low $(L)$ degree. The nodes with top 5\% degree are classified as high degree, the next 15\% are classified as medium degree, and the final 80\% are classified as low degree. Degree stubs are matched in the same way as in Section \ref{sec:deg-assort}. In this case, the $Q$ matrix is chosen so that stubs are more likely to pair with stubs from vertices of a \textit{different} degree class than their own. For our graph, we use 
\[
    Q = \begin{pmatrix}
        0.056 & 0.08 & 0.864 \\
        0.091 & 0.822 & 0.087 \\
        0.9 & 0.08 & 0.02
    \end{pmatrix}.
\]
The assortativity coefficient of the resulting graph is around $-0.15$, showing mild disassortativity \cite{newman2002assortative}. Finally, we implement a stochastic simulation of the travel dynamics and epidemic dynamics. Our simulations on the degree disassortative network are showing in Figure \ref{fig:sim-disassortative}.

\newpage

\begin{figure}[tbhp]
    \centering
    \includegraphics[width=\linewidth]{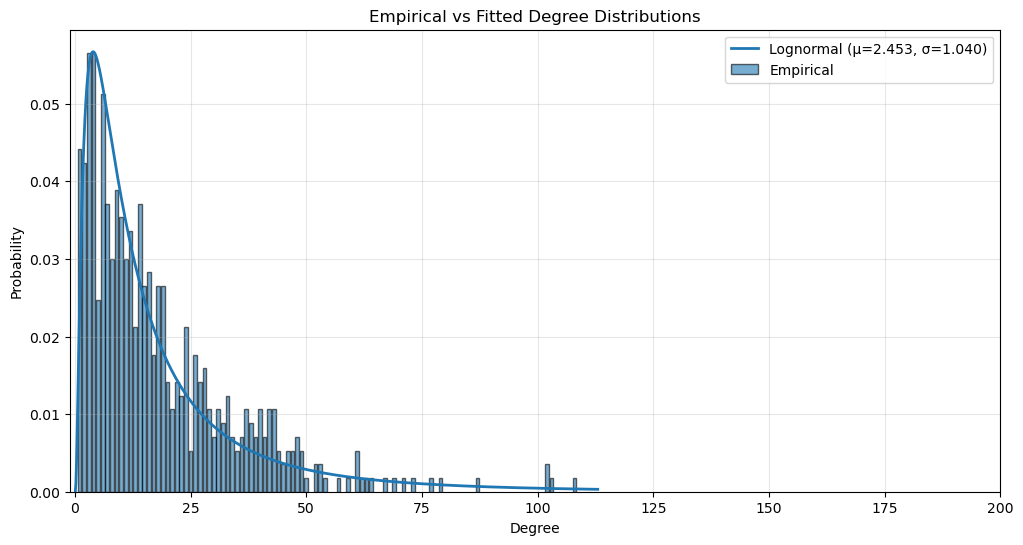}
    \caption{Empirical vs. fitted log-normal distribution $(\mu = 2.453, \sigma = 1.040)$ for Copenhagen Network Study contact network bluetooth dataset constructed from contacts during week 2 of the study. }
    \label{fig:copenhagen-deg-dist}
\end{figure}

\begin{figure}[tbhp]
    \centering
    \begin{subfigure}[b]{0.5\textwidth}
        \centering
        \includegraphics[width=.9\linewidth]{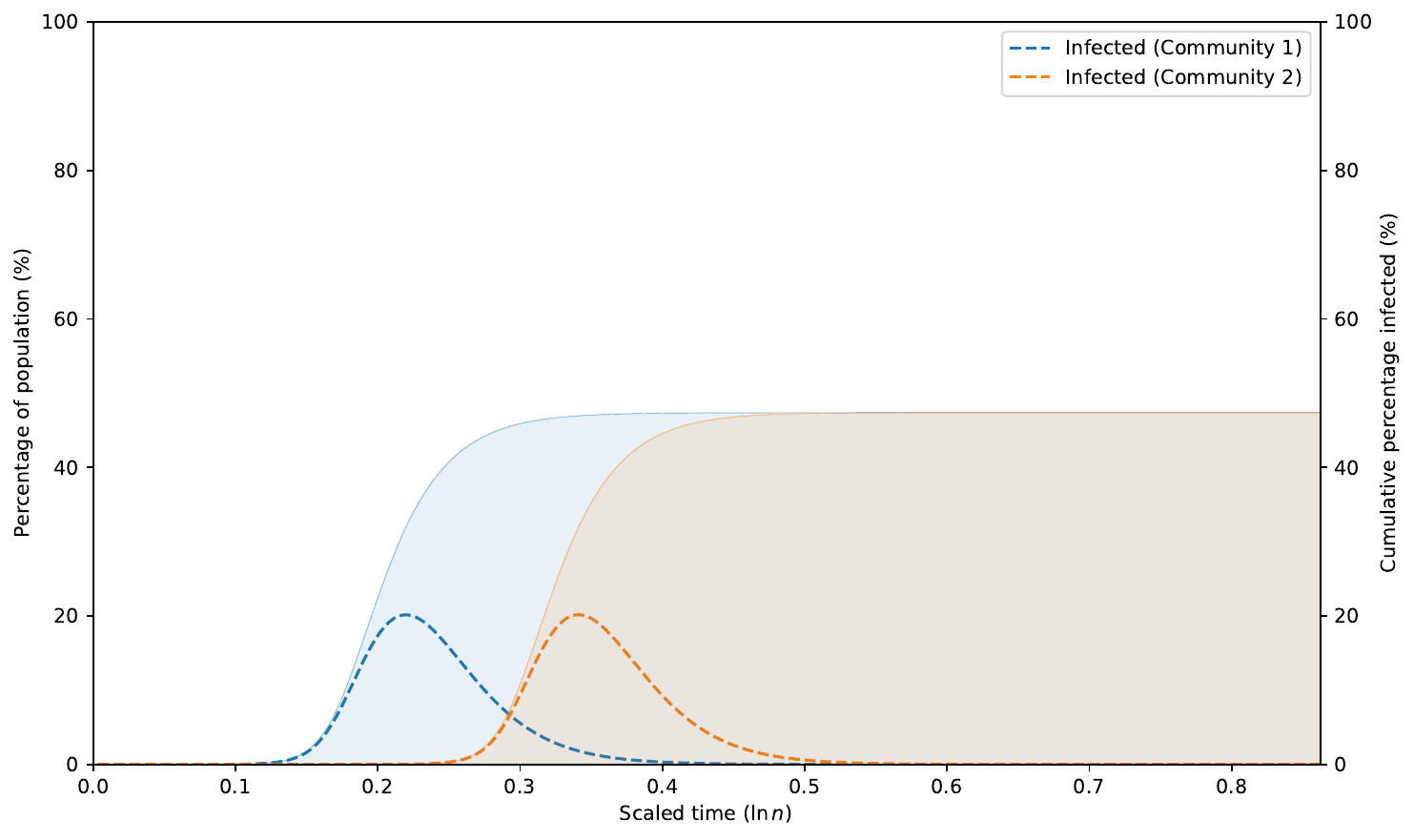}
        \caption{No intervention.}
    \end{subfigure}
    \begin{subfigure}[b]{0.5\textwidth}
        \centering
        \includegraphics[width=.9\linewidth]{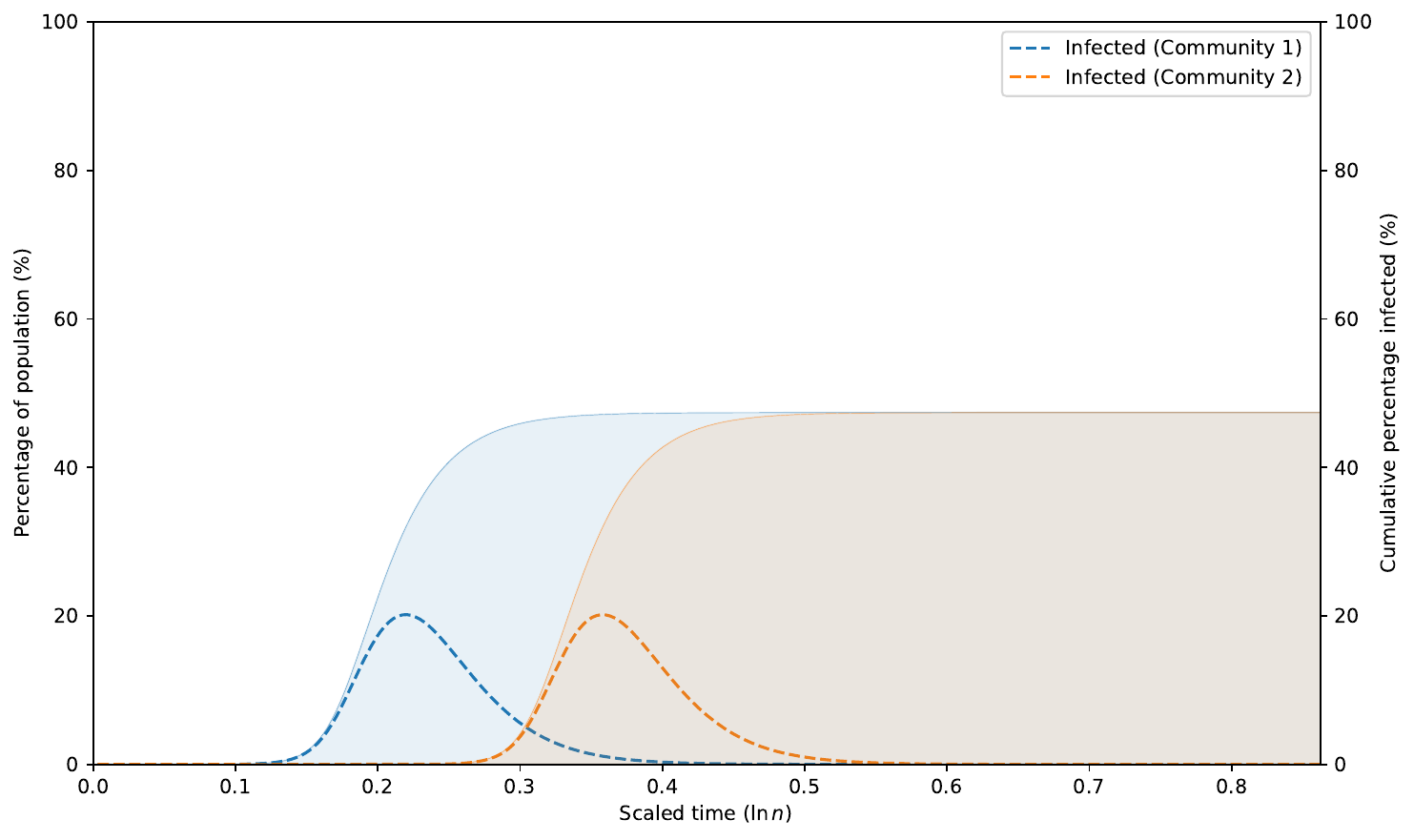}
        \caption{Travel ban implemented at $\tau_1(1\%).$}
    \end{subfigure}
    \begin{subfigure}[b]{0.5\textwidth}
        \centering
        \includegraphics[width=.9\linewidth]{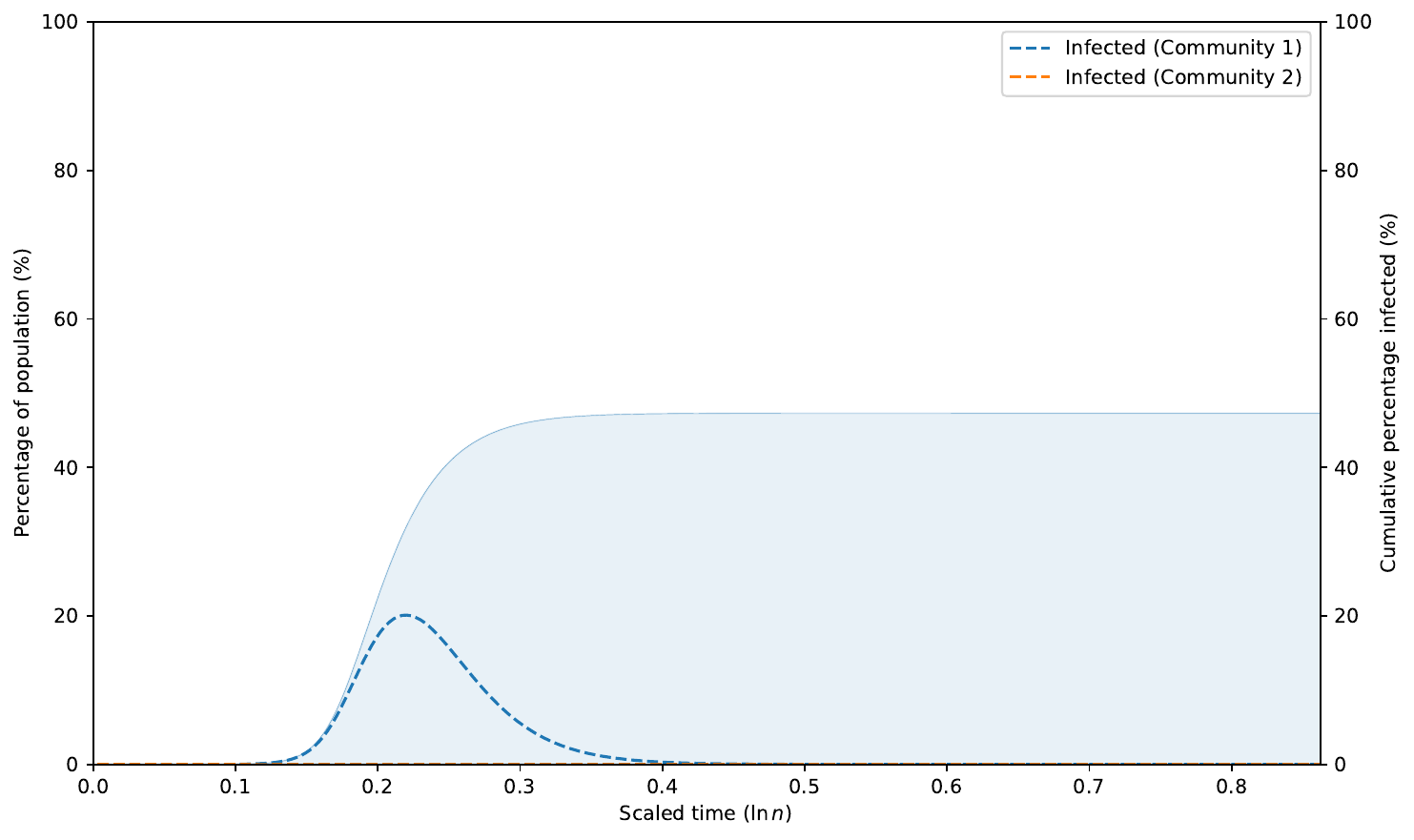}
        \caption{Intra-community intervention in community 2 implemented at $\tau_1(1\%).$}
    \end{subfigure}
    \caption{Simulation on two communities of $n=10^7$ individuals each, with an underlying network generated from a configuration model with lognormal distributed degrees ($\mu = 2.45, \sigma = 1.04$) extrapolated from the Copenhagen Network Study (CNS) Bluetooth network data \cite{sapiezynski2019data}. The dashed infection curves represent the proportion of infected individuals at any point in time (left axis), and the solid curves represent the cumulative percentage of infections (right axis). The infection rates are exponentially distributed with parameter $\beta = 0.1$. The recovery rates are gamma distributed with shape parameter $3.5$ and scale parameter $0.2041$. Other parameters are: $\rho_\Travel = n^{-1/2}$, and $\rho_\Home = 1$.}
    \label{fig:sim}
\end{figure}

\begin{figure}[tbhp]
    \centering
    \begin{subfigure}[b]{0.5\textwidth}
        \centering
        \includegraphics[width=.9\linewidth]{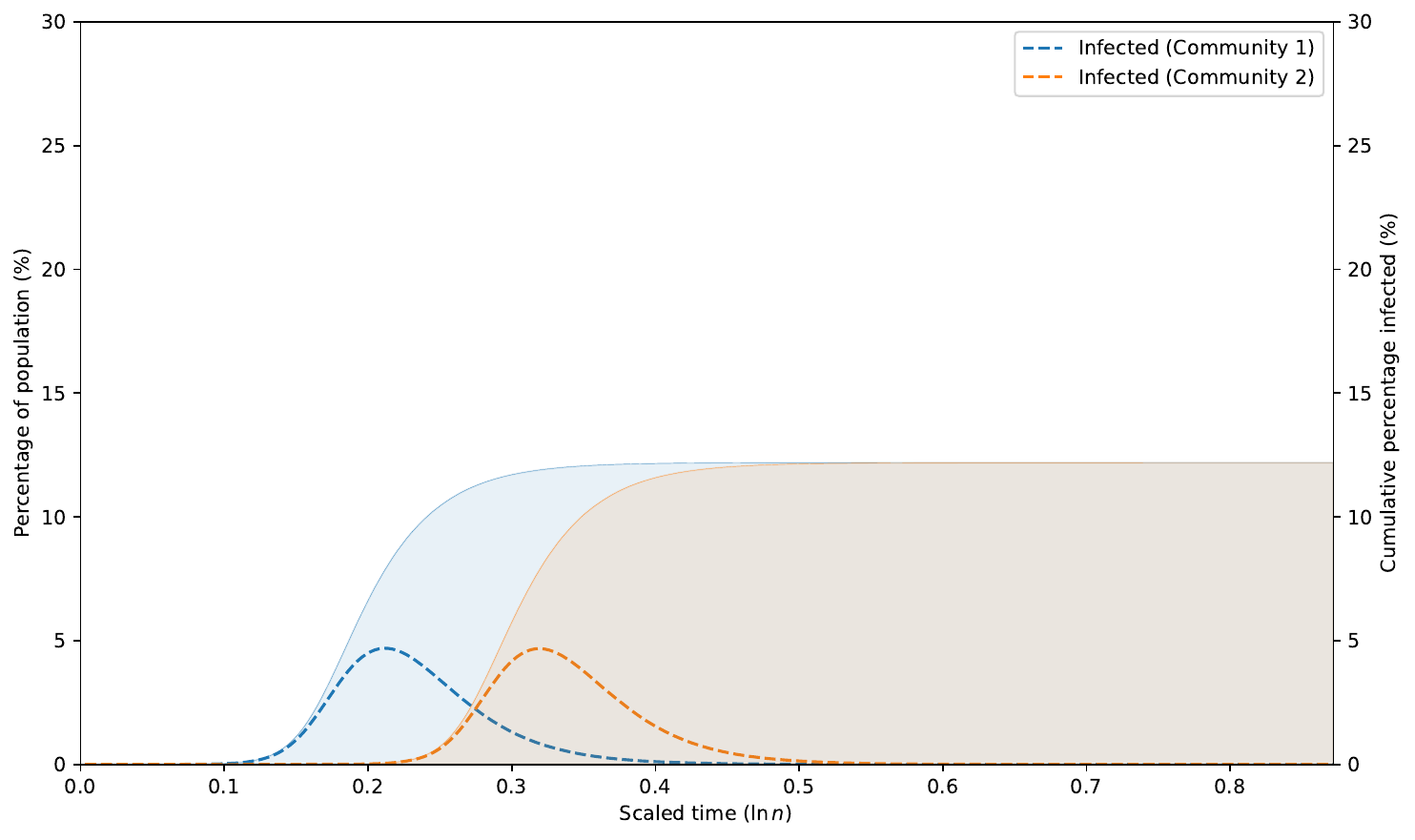}
        \caption{No intervention.}
    \end{subfigure}
    \begin{subfigure}[b]{0.5\textwidth}
        \centering
        \includegraphics[width=.9\linewidth]{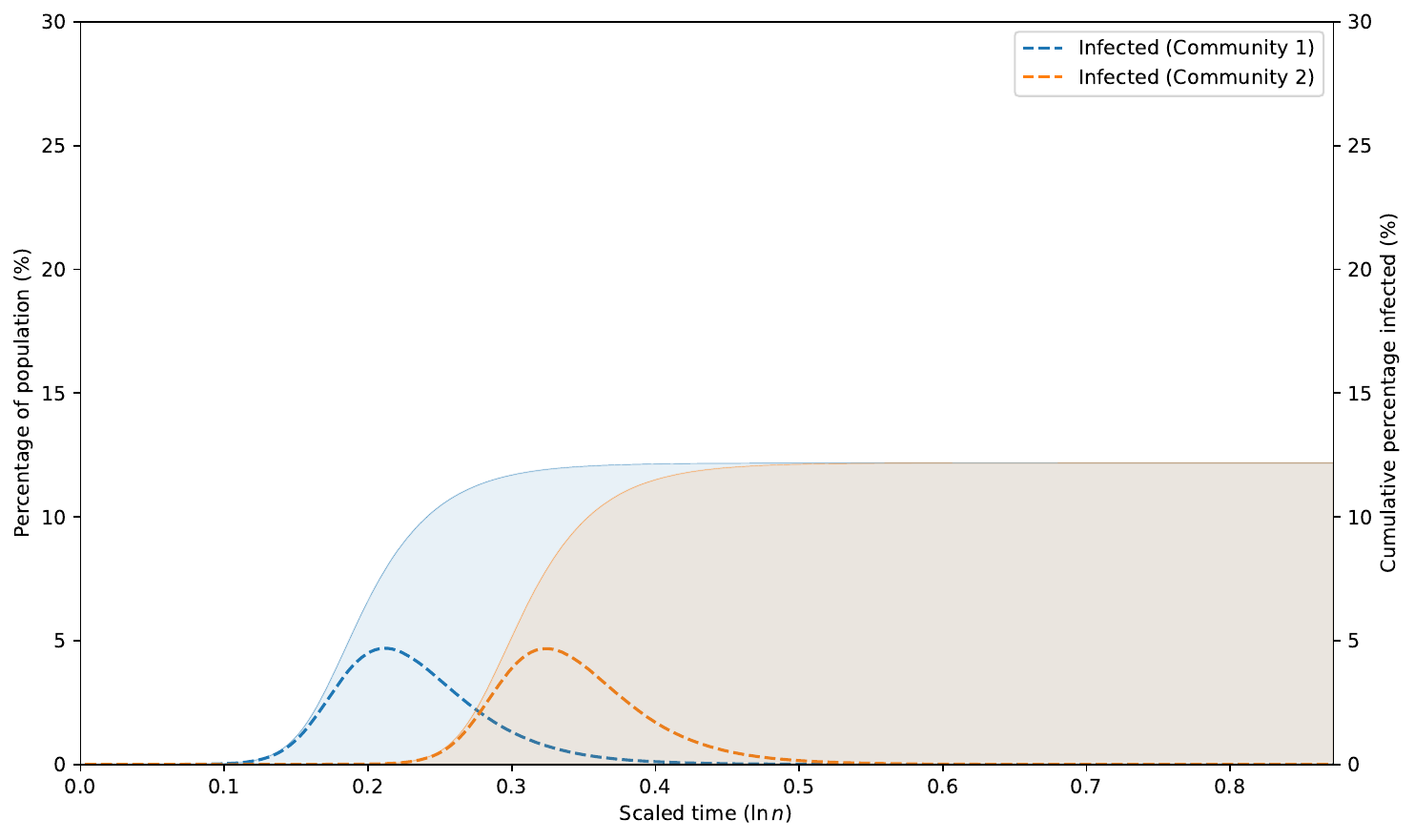}
        \caption{Travel ban implemented at $\tau_1(1\%).$}
    \end{subfigure}
    \begin{subfigure}[b]{0.5\textwidth}
        \centering
        \includegraphics[width=.9\linewidth]{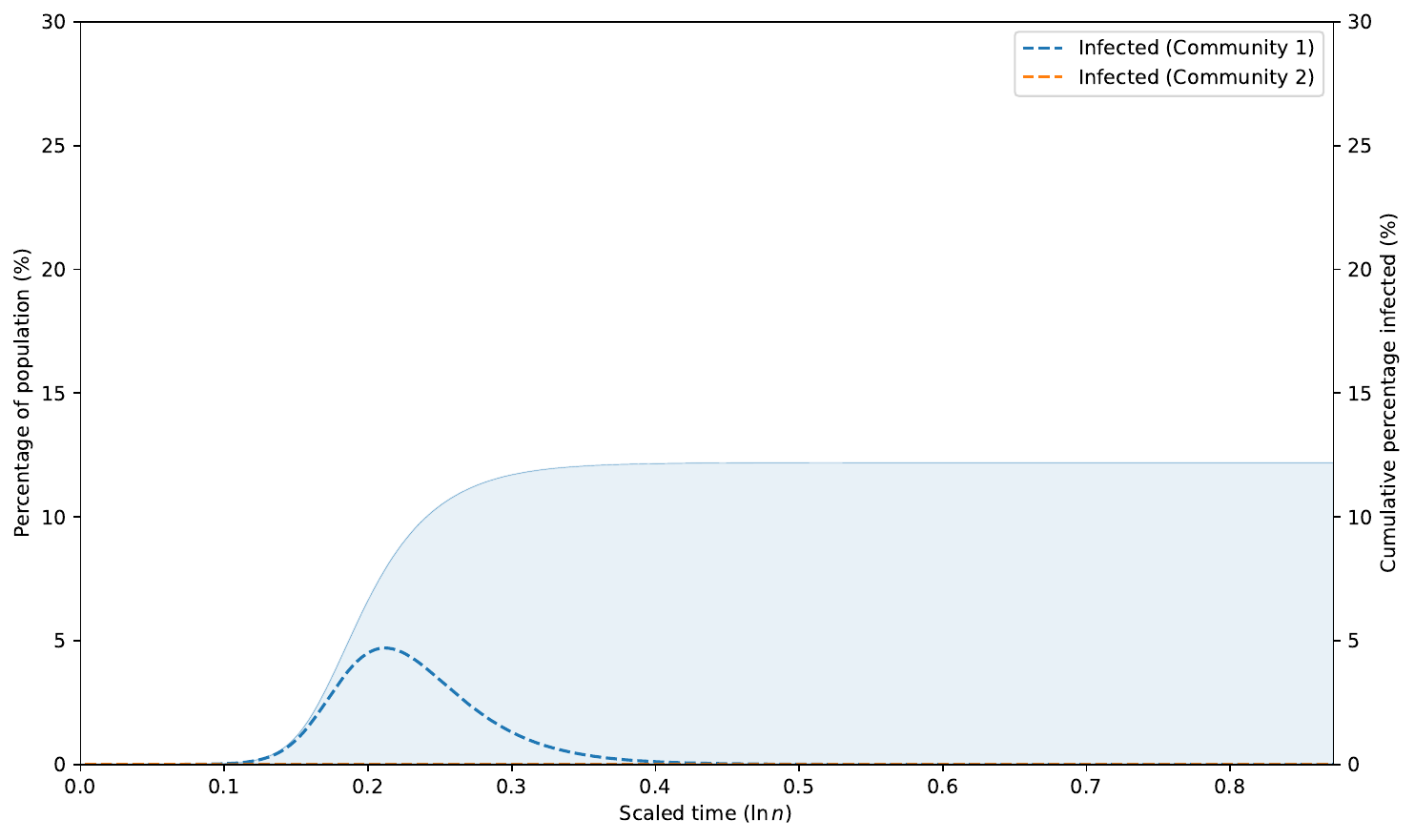}
        \caption{Intra-community intervention in community 2 implemented at $\tau_1(1\%).$}
    \end{subfigure}
    \caption{Simulation on two communities of $n=10^7$ individuals each, with an underlying degree assortative network. The dashed infection curves represent the proportion of infected individuals at any point in time (left axis), and the solid curves represent the cumulative percentage of infections (right axis). The infection rates are exponentially distributed with parameter $\beta = 0.1$. The recovery rates are gamma distributed with shape parameter $3.5$ and scale parameter $0.2041$. Other parameters are: $\rho_\Travel = n^{-1/2}$, and $\rho_\Home = 1$.}
    \label{fig:sim-assortative-v2}
\end{figure}

\newpage

\begin{figure}[tbhp]
    \centering
    \begin{subfigure}[b]{0.5\textwidth}
        \centering
        \includegraphics[width=.9\linewidth]{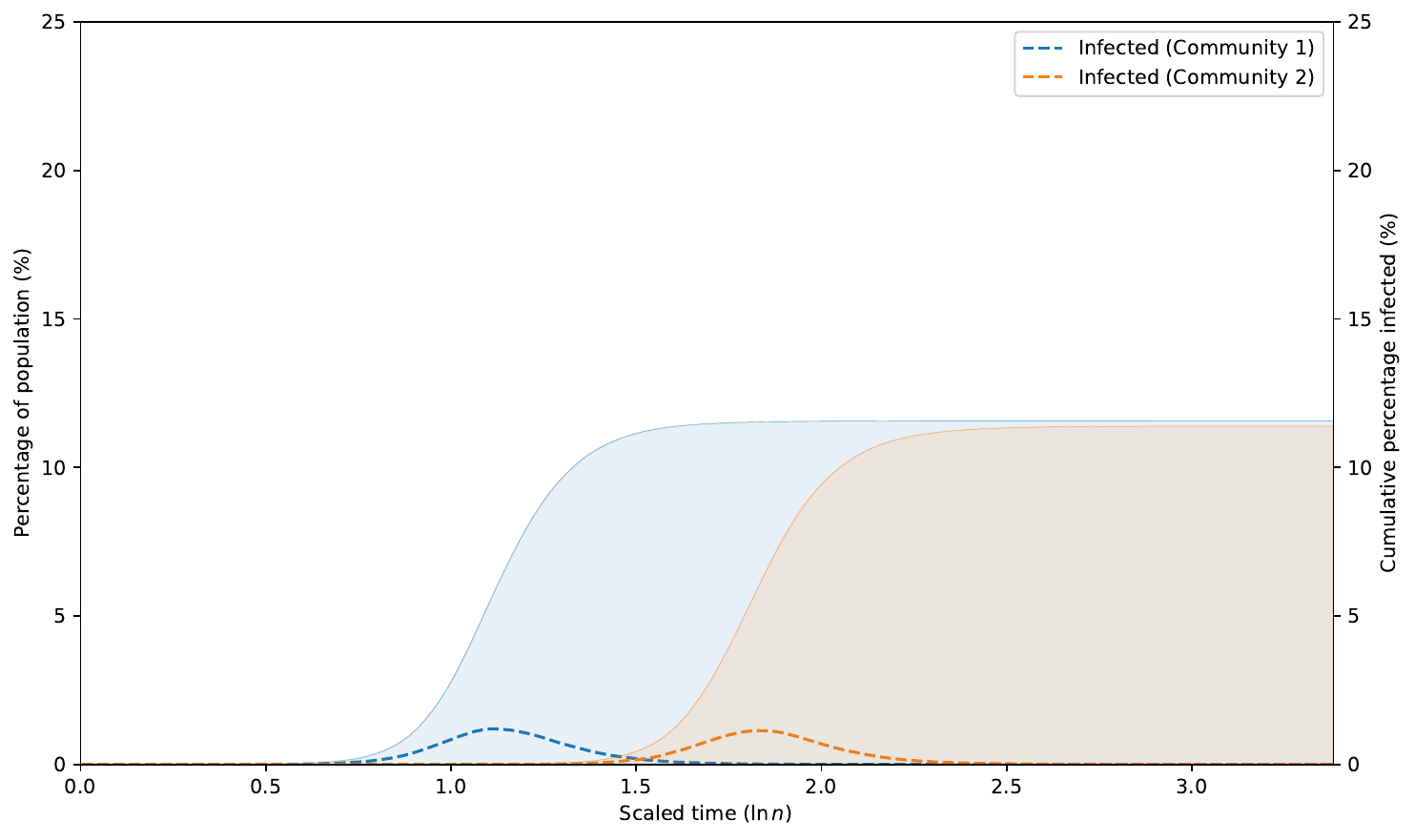}
        \caption{No intervention.}
    \end{subfigure}
    \begin{subfigure}[b]{0.5\textwidth}
        \centering
        \includegraphics[width=.9\linewidth]{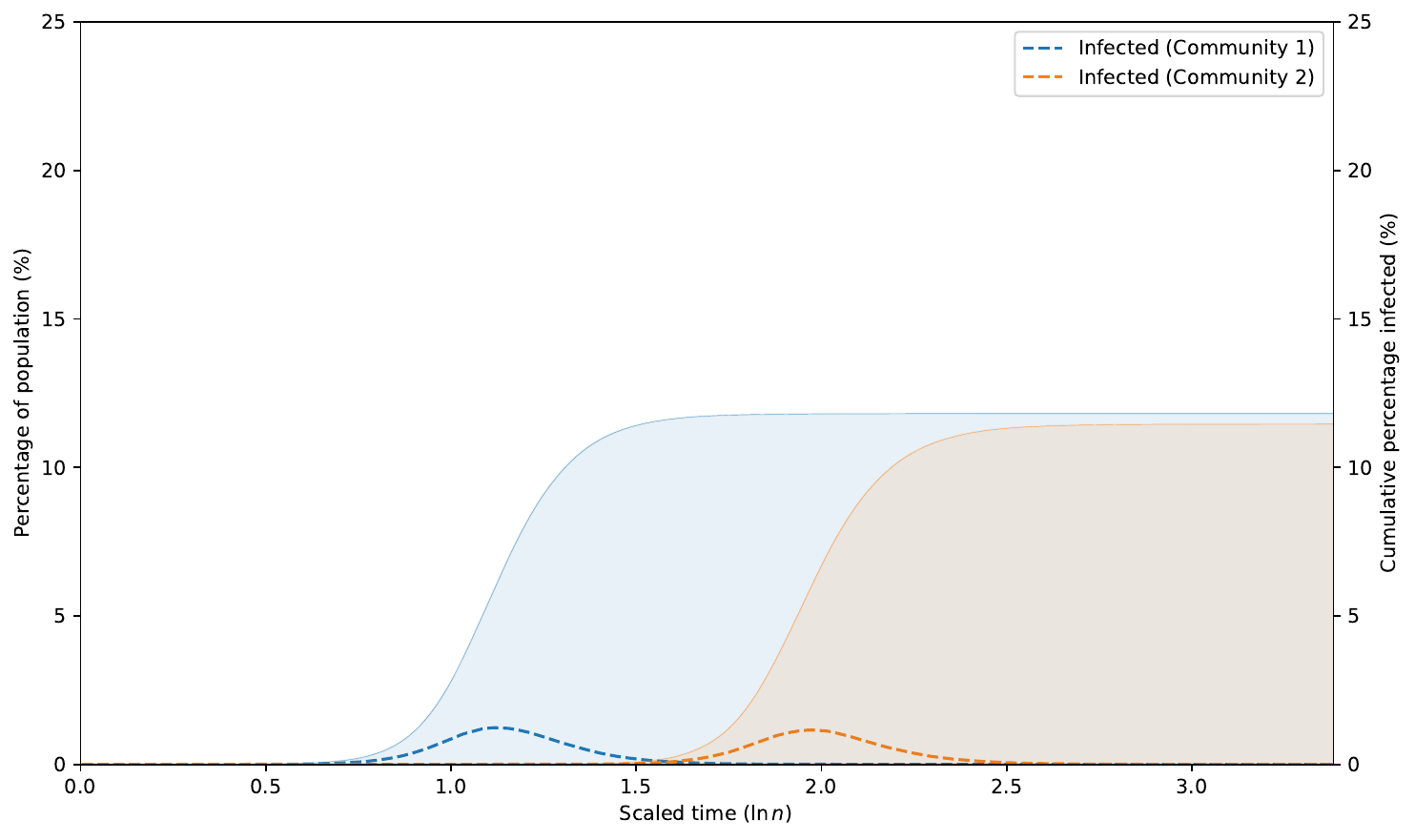}
        \caption{Travel ban implemented at $\tau_1(1\%).$}
    \end{subfigure}
    \begin{subfigure}[b]{0.5\textwidth}
        \centering
        \includegraphics[width=.9\linewidth]{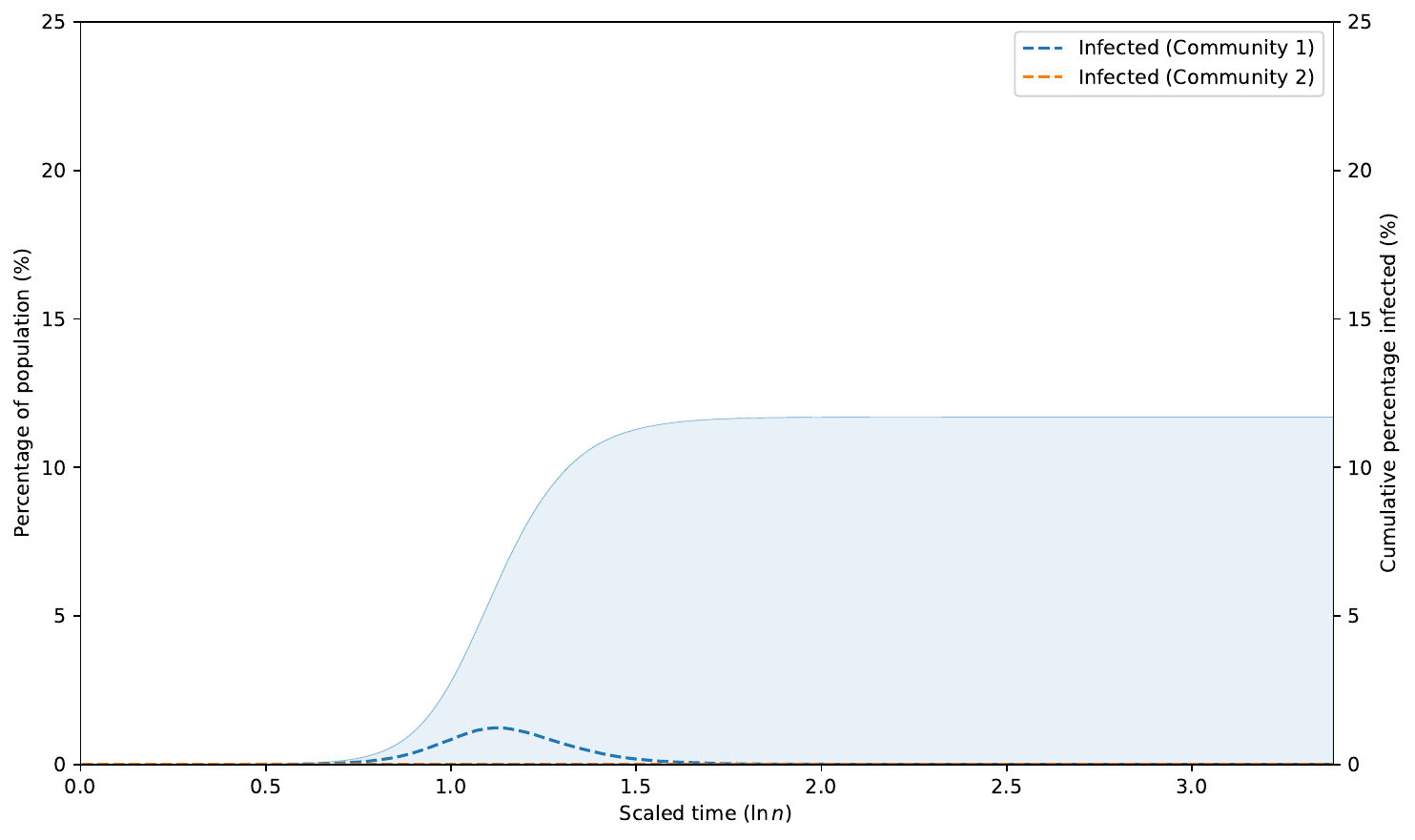}
        \caption{Intra-community intervention in community 2 implemented at $\tau_1(1\%).$}
    \end{subfigure}
    \caption{Simulation on two communities of $n=10^7$ individuals each, with an underlying degree disassortative network. The dashed infection curves represent the proportion of infected individuals at any point in time (left axis), and the solid curves represent the cumulative percentage of infections (right axis). The infection rates are exponentially distributed with parameter $\beta = 0.1$. The recovery rates are gamma distributed with shape parameter $3.5$ and scale parameter $0.2041$. Other parameters are: $\rho_\Travel = n^{-1/2}$, and $\rho_\Home = 1$.}
    \label{fig:sim-disassortative}
\end{figure}

\newpage

\end{document}